\documentclass[11pt,a4paper,reqno]{amsart}

\usepackage{amsmath}
\usepackage{amssymb}

\newtheorem{thm}{Theorem}[section]
\newtheorem{lem}[thm]{Lemma}
\newtheorem{prop}[thm]{Proposition}
\newtheorem{cor}[thm]{Corollary}
\newtheorem{defn}[thm]{Definition}

\theoremstyle{remark}
\newtheorem{remark}[thm]{Remark}
\newtheorem{exam}[thm]{Example}

\newcommand{\map}[4]{#1:#2^{#3}\rightarrow#4}
\newcommand{\inner}[2]{\langle#1,#2\rangle}
\newcommand{\N}{\mathbb{N}} 
\newcommand{\Z}{\mathbb{Z}} 
\newcommand{\C}{\mathbb{C}} 

\newcommand{\R}{\mathbb{R}}

\newcommand{\V}{\mathbb{V}}

\renewcommand{\S}{\mathbb{S}}
\newcommand{\RP}{\mathbb{RP}} 

\usepackage{hyperref}


\title[Equivariant constructions of Zoll families]{Equivariant constructions of spheres with Zoll families of minimal spheres}

\author{Lucas Ambrozio and Diego Guajardo}

\address{L. Ambrozio: IMPA \\ Rio e Janeiro RJ 22460-320
Brazil}
\email{l.ambrozio@impa.br}

\address{D. Guajardo: ICMC - USP \\ São Carlos SP 13566-590 Brazil}
\email{diegonguajardo@usp.br}

\thanks{L.A. is supported  by CNPq (309908/2021-3 - Bolsa PQ and 406666/2023-7 - Universal) and by FAPERJ  (grant SEI-260003/000534/2023 - BOLSA E-26/200.175/2023 and grant SEI-260003/001527/2023 - APQ1 E-26/210.319/2023). D. G. was financed, in part, by the São Paulo Research Foundation (FAPESP), Brasil. Process Number 2024/03493-2.}


\begin{document}

\begin{abstract}
	We construct one-parameter deformations of the Euclidean sphere $\mathbb{S}^n$ inside $\mathbb{R}^{n+1}$ that admit a Zoll family of codimension one embedded minimal spheres, in all dimensions $n\geq 2$. For $n=2$, this means that all geodesics are closed, embedded, and of the same period.  The method of construction is equivariant with respect to the natural actions of the orthogonal group. In particular, we show that the original Zoll spheres of revolution in $\mathbb{R}^3$ have counterparts in the context of minimal surface theory, in all dimensions. 
	
	We also describe the first examples of metrics on the real projective spaces $\mathbb{RP}^n$, in all dimensions $n \geq 3$, that admit a Zoll family of embedded minimal projective hyperplanes, and which are not isometric to metrics with minimal linear projective hyperplanes.
	
	The new constructions are underpinned by equivariant versions of Nash-Moser-Hamilton implicit function theorem, and yield new information even in dimension $n=2$. As an application, we also show that every finite group of the orthogonal group $O(3)$ that does not contain $-Id$ is the isometry group of some (classical) Zoll metric on $\mathbb{S}^2$.
\end{abstract}

\maketitle

\section{Introduction}
    Let $(M,g)$ be a connected Riemannian manifold. The metric $g$ is \textit{Zoll} if all geodesics are closed, simple, and have the same length. 
	
	The canonical metrics on spheres and projective spaces (\textit{i.e.} the standard metrics of the compact rank-one symmetric spaces) are Zoll metrics. All manifolds admitting Zoll metrics share topological properties with these manifolds (see \cite{Bes}, Chapter 7). It is expected that, on the projective spaces, the symmetric metrics are in fact the only Zoll metrics. This conjecture has been confirmed in the case of the real projective space $\mathbb{RP}^n$ in dimension $n=2$ by L. Green \cite{Gre} and in all other dimensions by M. Berger (see \cite{Bes}, Corollary D.2 together with Theorem 7.23).
	
	Perhaps surprisingly, spheres of any dimension have many Zoll metrics, aside from constant curvature metrics. The first examples were found by O. Zoll at the beginning of the twentieth century \cite{Zoll}. His examples are, in fact, spheres of revolution in $\mathbb{R}^3$. The construction was later extended, and a simple formula, depending only on an odd smooth function of one real variable, describes all Zoll metrics on the two-dimensional sphere that admit an isometric circle action (see \cite{Bes}, Corollary 4.16). Finally, Weinstein observed that the spherical suspension of these two-dimensional Zoll metrics yields Zoll metrics on spheres, of arbitrary dimension $n\geq 3$, which are invariant by the special orthogonal group $SO(n)$ (see \cite{Bes}, Proposition 4.41).   
	
	A different strategy for the construction of Zoll metrics on spheres, not based on symmetry reduction, was pursued by P. Funk \cite{Funk}, and later completed by V. Guillemin \cite{Guillemin}. Let $\mathbb{S}^n$ denote the Euclidean sphere of radius one centered at the origin of $\mathbb{R}^{n+1}$, and denote by $can$ its canonical metric. Funk deduced a necessary condition for the existence of a one-parameter family of Zoll metrics $g_t=e^{\rho_t}can$ on $\mathbb{S}^2$, with geodesics of length $2\pi$, starting at $g_0=can$. Namely, he showed that the time zero derivative $\dot{\rho}_0$ must be an odd function on $\mathbb{S}^2$ (with respect to the antipodal map). Then, Guillemin showed how to construct, out of any given smooth odd function $f$ on $\mathbb{S}^2$, a smooth one-parameter family of Zoll metrics $g_t=e^{\rho_t}can$ such that $g_t=(1+tf + o(t))can$ as $t$ goes to zero. In contrast with the two dimensional case, very little is known about the space of Zoll deformations in higher dimensions, since there are higher order obstructions for a Zoll deformation starting at $can$ to be possible (\cite{Kiyohara}).
	
	Due to the special yet flexible geometric properties of Zoll metrics on spheres, its study has attracted attention over the years from many different perspectives. While the classification of Zoll metrics remains incomplete (\textit{cf}. the comprehensive and still fairly updated classical book by A. Besse \cite{Bes}), some interesting recent results have highlighted the importance of Zoll metrics from variational points of view. See, for instance, \cite{ABS} (Corollary 4 and the subsequent discussion), \cite{AlvBal} (Theorem 1.6), and \cite{MS}. \\

	Geodesics have natural higher-dimensional generalizations: minimal submanifolds. While only geodesics can be studied with tools of the theory of dynamical systems, both geodesics and minimal submanifolds are variational objects, in the sense that they are critical points of the length and area functional, respectively.
	
	 There are several reasons to seek generalizations of Zoll metrics in the context of minimal submanifold theory, in particular because they may be special with respect to certain variational problems in the space of Riemannian metrics (\cite{AmbMon} and \cite{Luzzi}), and are relevant to min-max theory \cite{ACM2}. The next general definition seems to be appropriate to serve as a basis for fruitful investigations. It is modeled on the notion of a transitive family in a three-dimensional manifold introduced in \cite{GalMir}, and with the particular case of spheres introduced in \cite{ACM}.
	
	Let $(M,g)$ be a compact connected Riemannian manifold. A \textit{Zoll family of minimal hypersurfaces} in $(M,g)$ is a family of embedded compact connected minimal hypersurfaces $\{\Sigma_\sigma\}$, smoothly parametrized by points $\sigma$ of a compact connected smooth manifold, that satisfies the following two conditions: 
	\begin{itemize}
		\item[$a)$] For every point $(p,\pi)\in Gr_{n-1}(M)$ in the Grassmanian of (unoriented) tangent hyperplanes, there exists a unique $\Sigma_\sigma$ in the family such that $T_p\Sigma_\sigma=\pi$; and
		\item[$b)$] The assignment thus defined $(p,\pi)\in Gr_{n-1}(M)\mapsto \Sigma_\sigma$ is smooth.
	\end{itemize}

	The axioms above capture key properties of the set of geodesics of a Zoll metric on $\mathbb{S}^2$. In fact, the set of geodesics (modulo reparametrization) of a Zoll metric on $\mathbb{S}^2$ is known to be a set of embedded geodesic circles, smoothly parametrized by $\mathbb{RP}^2$ (see \cite{LeBMas}, Proposition 2.11), and whose elements depend smoothly on the point and tangent lines that define them uniquely. Notice also that the smoothness and connectedness of the family implies that all elements of the Zoll family of minimal hypersurfaces have the same area, because they are all critical points of the area functional.
	
	Denote by $\mathcal{Z}$ the set of Riemannian metrics on a given compact connected manifold that admit a Zoll family of minimal hypersurfaces. The set $\mathcal{Z}$ is preserved by dilation and pull-back by diffeomorphisms of the manifold. 
	
	For instance, on $\mathbb{S}^n$ (resp. $\mathbb{RP}^n$), the canonical metric $can$  belongs to $\mathcal{Z}$, because the set of codimension one totally geodesic spheres (resp. projective hyperplanes) is a Zoll family of minimal hypersurfaces in $(\mathbb{S}^n,can)$ (resp. $(\mathbb{RP}^n,can)$). The classification of minimal spheres in a homogeneous $(\mathbb{S}^3,g)$ implies that they form a Zoll family of minimal surfaces (see \cite{MeeksMiraPerezRos}).
	
	In \cite{ACM}, F. Marques, A. Neves and the first author constructed new metrics on $\mathbb{S}^n$ that belong to $\mathcal{Z}$, in all dimensions $n\geq 3$, using perturbation methods. For instance, they showed that for every smooth odd function $f$ on $\mathbb{S}^n$, $n\geq 3$, there exists a smooth one-parameter family of metrics $g_t=e^{\rho_t}can \in \mathcal{Z}$ such that $\rho_t = tf+o(t)$ as $t$ goes to zero (see \cite{ACM}, Theorem A). This is the exact counterpart of Guillemin's theorem in the context of minimal hypersurface theory. 
	
	The constructions in \cite{ACM} and \cite{Guillemin} yield only a partial understanding of the geometry of the metrics $g_t\in \mathcal{Z}$. In particular, it is not clear whether one can construct the family $g_t\in \mathcal{Z}$ in such a way that $g_t$ are the induced metrics on graphical perturbations of $\mathbb{S}^n$ inside $\mathbb{R}^{n+1}$, or so that $(\mathbb{S}^n,g_t)$ has a prescribed set of symmetries (for the case of the trivial isometry group, see, however, \cite{Bes}, Corollary 4.71 and \cite{ACM}, Theorem C).  \\
	
	We aim to fill some gaps in the understanding of the possible geometries of metrics in $\mathcal{Z}$, and to construct new examples with interesting geometric features.
	
	Our first result shows that there are non-trivial smooth one-parameter families of deformations of $\mathbb{S}^n$ inside $\mathbb{R}^{n+1}$ such that their induced metrics lie in $\mathcal{Z}$. More precisely, we have:

\begin{thm}\label{thmA}
    Let $f$ be a smooth odd function on $\mathbb{S}^n$, $n\geq 2$. Then there exists a smooth one-parameter family of functions $\psi_t\in C^{\infty}(\mathbb{S}^n)$, $|t|<\varepsilon$, with $\psi_t = tf + o(t)$ as $t$ goes to zero, such that  
    \begin{align}\label{eqisom}
        \iota_t: & \,\mathbb{S}^n\rightarrow\R^{n+1} \nonumber \\
        & x\rightarrow e^{\psi_t(x)}x,
    \end{align}
   is an embedding with $g_{t}=\iota_t^*can\in\mathcal{Z}$ for all $t\in (-\varepsilon,\varepsilon)$.
\end{thm}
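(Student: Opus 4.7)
The plan is to mirror the Nash--Moser--Hamilton implicit function theorem approach of \cite{ACM}, but with the conformal ansatz $g = e^{\rho}can$ replaced by the graph-embedding ansatz $g_{\psi} = \iota_{\psi}^{*}can$. A direct calculation, using that tangent vectors to $\mathbb{S}^{n}$ are perpendicular to the position vector in $\mathbb{R}^{n+1}$, gives
\[
g_{\psi} = e^{2\psi}\bigl(can + d\psi \otimes d\psi\bigr).
\]
Hence the map $\Psi(\psi) = g_{\psi}$, from $C^{\infty}(\mathbb{S}^{n})$ to the space of smooth Riemannian metrics, is tame smooth, and its derivative at $\psi = 0$ is $d\Psi(0)(\phi) = 2\phi \cdot can$, pointing exactly in the conformal direction studied in \cite{ACM}.

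Next, let $\mathcal{F}(g)$ be the nonlinear tame operator built in \cite{ACM} whose vanishing characterizes membership of $g \in \mathcal{Z}$ for $g$ near $can$. Recall the construction: one realizes candidate minimal hypersurfaces of $(\mathbb{S}^{n},g)$ as small normal graphs over the round equators, smoothly parametrized by the Grassmannian $Gr_{n-1}(\mathbb{S}^{n})$, and $\mathcal{F}$ measures the simultaneous failure of this family to consist of minimal hypersurfaces and to satisfy the transitivity axiom of a Zoll family. The composition $\mathcal{F} \circ \Psi$ vanishes at $\psi = 0$, and its linearization there equals $2\, d\mathcal{F}(can)(\phi \cdot can)$: this is twice the operator analyzed in \cite{ACM}, and hence admits a tame right inverse when restricted to the subspace of smooth odd functions on $\mathbb{S}^{n}$.

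Applying the Nash--Moser--Hamilton implicit function theorem (in the formulation developed later in the paper) then produces, for every smooth odd $f$, a smooth one-parameter family $\{\psi_{t}\}_{|t|<\varepsilon}$ satisfying $\mathcal{F}(\iota_{\psi_{t}}^{*}can) = 0$ with $\psi_{t} = tf + o(t)$. Since $\iota_{0} = \mathrm{Id}$, shrinking $\varepsilon$ if necessary ensures that each $\iota_{t} = \iota_{\psi_{t}}$ is an embedding, and setting $g_{t} = \iota_{t}^{*}can$ concludes the proof.

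The main obstacle is verifying that the Nash--Moser--Hamilton scheme really does port over to the graph ansatz. Since the image of $\Psi$ is strictly smaller than the set of conformal metrics, one cannot simply invoke \cite{ACM} as a black box; the tame estimates for $\mathcal{F} \circ \Psi$ must be redone. The crucial observation is that the extra term $d\psi \otimes d\psi$ appearing in $g_{\psi}$ is quadratic and tame, so it does not affect the structure of the iteration, and the antipodal-symmetry-based odd/even decomposition used in \cite{ACM} to identify the right inverse of the linearization is preserved by composition with $\Psi$, because $d\Psi(0)$ itself respects the decomposition.
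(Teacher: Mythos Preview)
Your high-level strategy---apply a Nash--Moser--Hamilton scheme to a map whose linearization at the origin reproduces the Funk-transform picture from \cite{ACM}---is correct, and is indeed what the paper does. But the execution has two genuine gaps that the paper has to work to fill.

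First, there is no operator ``$\mathcal{F}(g)$'' in \cite{ACM} whose vanishing characterizes $g\in\mathcal{Z}$. The setup there (and here) is a map $\Lambda(\psi,\Phi)$ of \emph{two} variables: $\psi$ controls the metric, while $\Phi\in C^\infty_{*,odd}(T_1\mathbb{S}^n)$ parametrizes the family of candidate hypersurfaces as graphs over the equators. The family is indexed by $\mathbb{RP}^n$, not by $Gr_{n-1}(\mathbb{S}^n)$, and the Zoll incidence axioms hold automatically for $\Phi$ small; the operator $\Lambda=(\Lambda_1,\Lambda_2)$ records the non-constancy of the area and the centered part of the Euler--Lagrange operator $\mathcal{H}(\psi,\Phi)$. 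You cannot absorb $\Phi$ into $\mathcal{F}$ and treat $\psi$ alone as the unknown: the approximate right-inverse $V$ must produce increments in \emph{both} $\psi$ and $\Phi$, and its construction couples the Funk transform in the $\psi$-direction with the Jacobi operator in the $\Phi$-direction.

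Second, the Nash--Moser theorem with quadratic error requires the approximate right-inverse $V(\psi,\Phi)$ to exist for \emph{all} $(\psi,\Phi)$ in a neighborhood of the origin, not just at $(0,0)$. Your observation that $d\psi\otimes d\psi$ is quadratic explains why the linearization at the origin agrees with the conformal case, but says nothing about the right-inverse at nearby points, where the metric $g_\psi=e^{2\psi}(can+d\psi\otimes d\psi)$ is genuinely non-conformal. The paper handles this by recomputing everything from scratch for $g_\psi$: the area density $J_v$ (Lemma~\ref{lemareafunctJ}), the Euler--Lagrange operator $\mathcal{H}$ (Lemma~\ref{lemeulerlagrangeoperator}), the generalized Funk transform $\mathcal{F}(\psi,\Phi)=D_1\mathcal{A}(\psi,\Phi)$ and its dual (Section~\ref{subsectfunk}), and then verifying that the resulting integral kernel $K(\psi,\Phi)$ has the same structural properties as in \cite{ACM} so that $L(\psi,\Phi)=\mathcal{F}\mathcal{F}^*$ remains an invertible elliptic pseudodifferential operator for $(\psi,\Phi)$ small in $C^{3n+3}$. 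None of this follows from ``quadratic and tame''; it requires the explicit formulas.
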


	The Zoll families of minimal hypersurfaces on $(\mathbb{S}^n,g_t)$ constructed in Theorem \ref{thmA} are parametrized by $\mathbb{RP}^n$ and consist of the image by $\iota_t$ of graphical perturbations of the codimension one totally geodesic spheres of $(\mathbb{S}^n,can)$. Also, being odd is a necessary condition on the first-order term of the expansion of $\psi_t$ if the induced metrics $g_t=i^*_tcan$ belong to $\mathcal{Z}$ and have all elements of the Zoll family with area $area(\mathbb{S}^{n-1},can)$, see Remark \ref{rmkallvariations}.
	 
	It is important to bear in mind that the translations of $\mathbb{S}^n$ in the direction $v\in \mathbb{R}^{n+1}$ produce a family of isometric embeddings as in \eqref{eqisom} such that $\dot{\psi}_0(x)=\langle x,v\rangle$ for all $x\in \mathbb{S}$. Thus, any smooth odd function on $\mathbb{S}^n$ that is $L^2$-orthogonal to all (restrictions of) linear functions does in fact generate, via Theorem \ref{thmA}, a non-trivial deformation of $\mathbb{S}^n$ inside $\mathbb{R}^{n+1}$ with induced metric in $\mathcal{Z}$ (a similar remark can be made in the context of conformal deformations considered in \cite{ACM}, because one-parameter families of conformal diffeomorphisms of $(\mathbb{S}^n,can)$ generate families $g_t=e^{\rho_t}can \in \mathcal{Z}$ with $\dot{\rho}_0=\langle x,v\rangle$ for some vector $v\in \mathbb{R}^{n+1}$).
		
	The specific construction of the family $g_t$ in Theorem \ref{thmA}, out of a given smooth odd function $f$ on $\mathbb{S}^n$, turns out to be equivariant under the natural action of the orthogonal group $O(n+1)$ on $\mathbb{S}^n$ and $\mathbb{R}^{n+1}$. Building on this observation, we prove:
	 
\begin{thm}\label{thmB}
    In the setting of Theorem \ref{thmA} and its proof, assume further that $f$ is $L^2$-orthogonal to the linear functions. Then, for every sufficiently small $t\neq 0$, the isometry group of $(\mathbb{S}^n,g_t)$ is the stabilizer group
    \begin{equation*}
    	G_f = \{T\in O(n+1)\,|\, f\circ T=f\}.
    \end{equation*}
    Moreover, every isometry sends a member of the Zoll family of embedded minimal spheres in $(\mathbb{S}^n,g_t)$ into another member of the Zoll family. 
\end{thm}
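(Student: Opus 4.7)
My plan is to establish the inclusion $G_f \subseteq \text{Isom}(\S^n, g_t)$ as a direct consequence of the equivariance of the construction underlying Theorem~\ref{thmA}, to deduce the permutation statement from uniqueness of the minimal hypersurfaces in the Zoll family, and to prove the reverse inclusion $\text{Isom}(\S^n, g_t) \subseteq G_f$ by linearization at $t=0$ using essentially the $L^2$-orthogonality of $f$ to linear functions.

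For the easy inclusion, the iterative scheme used in the proof of Theorem~\ref{thmA} employs only $O(n+1)$-equivariant operators, so uniqueness in the underlying implicit function theorem yields $\psi_t^{f \circ T} = \psi_t^{f} \circ T$ for every $T \in O(n+1)$; taking $T \in G_f$ gives $\psi_t \circ T = \psi_t$, whence $\iota_t \circ T = T \circ \iota_t$ and therefore $T^{*}g_t = g_t$. The permutation of the Zoll family by an arbitrary isometry $\Phi \in \text{Isom}(\S^n, g_t)$ then follows from a uniqueness/proximity argument: each member $\Sigma_\sigma$ of the family is the unique minimal hypersurface of $(\S^n, g_t)$ lying in a small, controlled neighborhood of the corresponding totally geodesic sphere of $(\S^n, can)$; because for $t$ small every isometry of $g_t$ is $C^{k}$-close to an element of $O(n+1)$ (by upper semicontinuity of the isometry group in the metric), the image $\Phi(\Sigma_\sigma)$ remains in a neighborhood of a totally geodesic sphere and must therefore be another member of the family.

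For the reverse inclusion, I would take a subsequential limit $\Phi_0 \in O(n+1)$ of isometries $\Phi_{t_k} \in \text{Isom}(\S^n, g_{t_k})$, write $\Phi_{t_k} = \Phi_0 \circ \Psi_{t_k}$ with $\Psi_{t_k} \to \text{Id}$, and differentiate $\Phi_{t_k}^{*} g_{t_k} = g_{t_k}$ at $t = 0$. A direct computation from $\iota_t(x) = e^{\psi_t(x)}x$ with $\psi_t = tf + o(t)$ gives $\dot g_0 = 2 f \cdot can$, and using $\Phi_0^{*}can = can$ one arrives at
\begin{equation*}
\mathcal{L}_{\dot\Psi_0} can \;=\; 2\,(f - f\circ\Phi_0)\, can,
\end{equation*}
so that $\dot\Psi_0$ is a conformal vector field on $(\S^n, can)$ with conformal factor $f - f\circ\Phi_0$. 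Since conformal factors on $(\S^n, can)$ (for $n \geq 2$) are restrictions of linear functions from $\R^{n+1}$, and since $f$ (hence also $f\circ\Phi_0$, because $\Phi_0\in O(n+1)$ preserves the space of linear functions on $\R^{n+1}$) is $L^2$-orthogonal to linear functions, the conformal factor is forced to vanish, giving $f\circ\Phi_0 = f$, i.e.\ $\Phi_0\in G_f$.

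To upgrade this limiting statement to $\text{Isom}(\S^n, g_t) = G_f$ for every sufficiently small $t \neq 0$, I would identify $\text{Isom}(g_t)$ with a closed subgroup of $O(n+1)$ (via Ebin's slice theorem, or directly from rigidity of hypersurfaces $C^{2}$-close to $\S^n$), express its Lie algebra as $\mathfrak{h}_t = \ker\{X \in \mathfrak{o}(n+1) \mapsto X(\psi_t) \in C^{\infty}(\S^n)\}$, and combine the containment $\text{Lie}(G_f) \subseteq \mathfrak{h}_t$ (from the easy inclusion) with lower semicontinuity of the rank of the family of maps $\tfrac{1}{t}(X\mapsto X(\psi_t))$, which converges to $X\mapsto X(f)$ as $t\to 0$; a rank-nullity argument then forces $\mathfrak{h}_t = \text{Lie}(G_f)$. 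The component group is handled by noting that any $\Phi \in \text{Isom}(g_t)$ lies $C^k$-close to some $A_0 \in G_f$ by the preceding paragraph, so $\Phi A_0^{-1}$ belongs to the identity component and hence to $G_f^0$. The main obstacle is precisely this identification of $\text{Isom}(\S^n, g_t)$ with a subgroup of $O(n+1)$, together with the attendant care needed in the limiting argument: here the orthogonality hypothesis is indispensable, since otherwise (e.g.\ if $f$ were itself a linear function) $g_t$ would be isometric to a translate of $(\S^n, can)$ and its isometry group would be realized as a conjugate copy of $O(n+1)$ in $\text{Diff}(\S^n)$ not contained in $O(n+1)$ itself.
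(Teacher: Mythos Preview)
Your overall strategy---equivariance for the easy inclusion, a limiting argument for the hard one, then a Lie-algebra plus component-group step---matches the paper's architecture. But the linearization step in your reverse inclusion has a genuine gap, and your identification of the isometry group as a subgroup of $O(n+1)$ glosses over a point the paper treats carefully.

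\textbf{The gap.} You write ``differentiate $\Phi_{t_k}^{*} g_{t_k} = g_{t_k}$ at $t = 0$'' and then produce a conformal vector field $\dot\Psi_0$. But $\Phi_{t_k}$ is only a \emph{sequence} of diffeomorphisms, not a smooth one-parameter family; there is no a priori reason for $(\Psi_{t_k}-\mathrm{Id})/t_k$ to be bounded, let alone convergent. Even after rigidity identifies $\Phi_{t_k}$ with the restriction of an affine isometry $x\mapsto A_k x+v_k$, you would still need $\lVert A_k-A_0\rVert=O(t_k)$ and $|v_k|=O(t_k)$, and nothing you have said provides this. The paper avoids the problem entirely: instead of differentiating the isometries, it uses a scalar invariant---the mean curvature $H_t(x)$ of $\iota_t(\mathbb{S}^n)\subset\mathbb{R}^{n+1}$---which is a smooth function of $(t,x)$ and is preserved under intrinsic isometries (because, by Cohn--Vossen/positive-curvature rigidity, these extend to ambient Euclidean isometries). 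The identity $H_{t_k}(x)=H_{t_k}(\hat T_k(x))$ then holds pointwise, and a two-variable Taylor expansion of the smooth function $(t,x)\mapsto H_t(x)$ (the ``Claim'' in the paper's proof) shows $(\Delta+n)(f-f\circ A)=0$, whence $f-f\circ A$ is linear and thus zero by your orthogonality hypothesis. This is the same endgame as your conformal-factor argument, but reached without ever differentiating the isometry in~$t$.

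\textbf{The identification issue.} Rigidity gives $\text{Isom}(\mathbb{S}^n,g_t)\hookrightarrow\text{Iso}(\mathbb{R}^{n+1})$, not $O(n+1)$; the translation part $v_k$ must be shown to vanish in the limit (the paper does this first in its key lemma). Consequently your formula $\mathfrak{h}_t=\ker\{X\in\mathfrak{o}(n+1)\mapsto X(\psi_t)\}$ presupposes what needs proving. Likewise, the paper's final passage from ``limits lie in $G_f$'' to ``$G_t=G_f$ for all small $t$'' is not a soft rank argument: it uses the Lie-algebra equality (proved via the limiting lemma) together with an iteration trick---writing $A_k=g_k\exp(s_k w_k)$ with $w_k\in\text{Lie}(G_f)^\perp$, then passing to powers $T_k^{n_k}$ with $n_k s_k\to t$---to manufacture a one-parameter subgroup in $G_f$ and reach a contradiction. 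Your sketch of this last step would need to incorporate that mechanism (or an equivalent one).

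Finally, your separate proximity argument for the Zoll-permutation claim is unnecessary: once $G_t=G_f$, the equivariance $A\Sigma_v(\Phi_t)=\Sigma_{Av}(\Phi_t)$ for $A\in G_f$ (which you already have from the easy inclusion) gives it immediately.
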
	    
    
	This result has several consequences of geometric interest. For instance, by choosing any smooth odd function on $\mathbb{S}^n$, $n\geq 3$, depending only on one coordinate of $\mathbb{R}^{n+1}$ and $L^2$-orthogonal to linear functions, we construct higher dimensional counterparts of Zoll's original spheres of revolution, in the context of minimal hypersufaces theory:
	
\begin{thm}\label{thmC}
	In all dimensions $n\geq 3$, there exist non-trivial $O(n)$-invariant spheres in $\mathbb{R}^{n+1}$ that contain a Zoll family of codimension one embedded minimal spheres which is itself preserved by the action of $O(n)$.
\end{thm}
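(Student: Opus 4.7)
My plan is to deduce the theorem directly from Theorems \ref{thmA} and \ref{thmB} by choosing $f$ to be a suitable symmetric function. First, I identify $O(n) \subset O(n+1)$ with the stabilizer of $e_1 = (1,0,\ldots,0)$, so that it acts on $\R^{n+1}$ fixing the first coordinate $x_1$ and rotating the others. The smooth $O(n)$-invariant functions on $\mathbb{S}^n$ are precisely those of the form $f(x) = h(x_1)$, and such an $f$ is odd under the antipodal map if and only if $h \colon [-1,1] \to \R$ is odd. For any such $f$, the integrals $\int_{\mathbb{S}^n} f \cdot x_i \, d\sigma$ automatically vanish for $i \geq 2$ by averaging over $O(n)$, so the $L^2$-orthogonality to every linear function reduces to the single scalar condition $\int_{\mathbb{S}^n} h(x_1) x_1 \, d\sigma = 0$. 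Any nonzero odd $h$ satisfying this will do, for instance $h(x_1) = x_1^3 - c_n x_1$ with an appropriate constant $c_n > 0$.

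Next I apply Theorem \ref{thmA} to this $f$, and exploit the equivariance (implicit in the proof of Theorem \ref{thmB}) that the construction of $\psi_t$ produces a $G$-invariant function whenever the input $f$ is $G$-invariant for a subgroup $G \subseteq O(n+1)$. Taking $G = O(n)$, $\psi_t$ is $O(n)$-invariant, so $\iota_t$ is $O(n)$-equivariant, and hence $\iota_t(\mathbb{S}^n) \subset \R^{n+1}$ is an $O(n)$-invariant sphere on which $O(n)$ acts by restrictions of linear isometries of $\R^{n+1}$. Because $f$ is also $L^2$-orthogonal to the linear functions, Theorem \ref{thmB} applies: for every sufficiently small $t \neq 0$, every isometry of $(\mathbb{S}^n, g_t)$ permutes the members of the Zoll family, so in particular the $\iota_t$-image of the Zoll family inside $\iota_t(\mathbb{S}^n)$ is itself a Zoll family of codimension one embedded minimal spheres preserved by $O(n)$.

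It remains to verify the non-triviality of $\iota_t(\mathbb{S}^n)$, i.e.\ that it is not already a round sphere in $\R^{n+1}$ for small $t \neq 0$. If $\iota_t(\mathbb{S}^n)$ were a round sphere of radius $r_t$ centered at some $p_t \in \R^{n+1}$ for each such $t$, then expanding $|e^{\psi_t(x)} x - p_t|^2 = r_t^2$ and differentiating at $t = 0$ would give $\dot\psi_0(x) = \dot r_0 + \langle x, \dot p_0 \rangle$, an affine function of $x$; since $f = \dot\psi_0$ is odd, this forces $\dot r_0 = 0$ and $f(x) = \langle x, \dot p_0 \rangle$, and the $L^2$-orthogonality to linear functions then forces $\dot p_0 = 0$, contradicting $f \not\equiv 0$. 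Given that Theorems \ref{thmA} and \ref{thmB} do the heavy lifting, I foresee no substantive obstacle here; the only delicate point is the initial choice of $f$ so as to simultaneously carry the desired symmetry and first-order orthogonality properties, and this is elementary.
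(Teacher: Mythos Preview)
Your proof is correct and follows essentially the same strategy as the paper: choose a nonzero odd $O(n)$-invariant $f$ orthogonal to the linear functions and invoke Theorems~\ref{thmA} and~\ref{thmB}. The only difference worth noting is in the non-triviality step: the paper argues via an auxiliary lemma that any closed subgroup of $O(n+1)$ containing $O(n)$ but not $-Id$ must equal $O(n)$, so Theorem~\ref{thmB} forces the isometry group of $g_t$ to be exactly $O(n)\neq O(n+1)$, whereas your first-order computation is a valid and slightly more elementary alternative that avoids pinning down $G_f$ precisely.
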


	While O. Zoll's original metrics are explicit and can be far from being round, notice, however, that the metrics in Theorem \ref{thmC} are perturbations of canonical metric, and we did not find a simple formula describing the generating curve.
	 
	We also apply Theorem \ref{thmB} to obtain new Zoll metrics on $\mathbb{S}^2$:
	
	\begin{thm}\label{thmD}
  	  Let $G$ be a finite subgroup of $O(3)$ with $-Id\notin G$. Then there are embedded spheres in $\mathbb{R}^3$ whose induced metrics are Zoll and whose isometry groups are $G$.
	\end{thm}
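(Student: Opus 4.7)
The plan is to apply Theorems \ref{thmA} and \ref{thmB} with $n=2$. In dimension $n=2$, a Zoll family of minimal hypersurfaces is a smoothly parametrised family of closed embedded geodesics realising every tangent line to $\S^2$; by uniqueness of the geodesic initial-value problem, every geodesic of the ambient metric then lies in the family and, by connectedness and the common area of the elements, all geodesics are closed, simple, and of a common length. Hence, in dimension two, $g\in \mathcal{Z}$ is equivalent to $g$ being a classical Zoll metric on $\S^2$. It therefore suffices to construct, for each finite $G\leq O(3)$ with $-\operatorname{Id}\notin G$, a smooth odd function $f$ on $\S^2$ that is $L^2$-orthogonal to the linear functions and whose stabiliser $G_f:=\{T\in O(3):f\circ T=f\}$ equals $G$. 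Theorem \ref{thmA} then furnishes embeddings $\iota_t:\S^2\to\R^3$ with Zoll induced metrics, and Theorem \ref{thmB} identifies the isometry group of each $(\S^2,\iota_t^*can)$ with $G_f=G$ for all small $t\neq 0$.

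To build $f$, I would symmetrise an asymmetric localised bump. Pick $x_0\in \S^2$ with trivial $G$-isotropy such that the $2|G|$ points $G x_0\cup(-G)x_0$ are pairwise distinct (possible since $-\operatorname{Id}\notin G$). Let $\phi\in C^\infty(\S^2)$ be a non-negative smooth bump supported in a small ball around $x_0$, chosen small enough that the translates $g\cdot\operatorname{supp}(\phi)$ and $(-g)\cdot\operatorname{supp}(\phi)$ are pairwise disjoint as $g$ ranges over $G$, and chosen generically so that $\phi$ has trivial stabiliser within the isotropy subgroup $O(2)_{x_0}\subset O(3)$ of $x_0$. With $\tilde\phi(x):=\phi(x)-\phi(-x)$ the corresponding odd function, set
\begin{equation*}
    f_0:=\sum_{g\in G}\tilde\phi\circ g^{-1}, \qquad f:=f_0-L(f_0),
\end{equation*}
where $L:C^\infty(\S^2)\to\mathcal{H}_1$ denotes the $L^2$-orthogonal projection onto the space of linear functions restricted to $\S^2$. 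By construction $f$ is smooth, odd, $G$-invariant, and $L^2$-orthogonal to $\mathcal{H}_1$, and the inclusion $G\subseteq G_f$ is automatic.

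The heart of the argument is the reverse inclusion $G_f\subseteq G$. For $T\in G_f$, the identity $f\circ T=f$ rewrites as $f_0\circ T - f_0 = L(f_0)\circ T - L(f_0)\in\mathcal{H}_1$. The left-hand side is a finite signed sum of translates of $\tilde\phi$, hence supported in a finite union of small balls whose complement in $\S^2$ is a non-empty open set; the right-hand side, being the restriction of a homogeneous degree-one polynomial, is real-analytic on $\S^2$. A real-analytic function vanishing on a non-empty open set is identically zero, so both sides vanish; in particular $f_0\circ T=f_0$. The element $T$ must then permute the positive bumps of $f_0$ concentrated at $Gx_0$, so $T(x_0)=g_0 x_0$ for some $g_0\in G$; after replacing $T$ by $g_0^{-1}T$ we may assume $T(x_0)=x_0$, i.e.\ $T\in O(2)_{x_0}$. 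Since near $x_0$ the function $f_0$ agrees with $\phi$, the element $T$ preserves $\phi$, and the genericity of $\phi$ inside $O(2)_{x_0}$ forces $T=\operatorname{Id}$, whence the original $T\in G$. The main obstacle I anticipate is precisely this interaction between the localised bump construction and the global projection $L$: a priori, subtracting $L(f_0)$ could produce spurious symmetries by trading a localised ``bump difference'' against a non-trivial linear function, and the real-analyticity of elements of $\mathcal{H}_1$ combined with the compactly-localised nature of the bumps is the decisive ingredient that rules this out.
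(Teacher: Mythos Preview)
Your argument is correct and follows a genuinely different route from the paper. The paper reduces, as you do, to exhibiting an odd $f\in C^\infty(\S^2)$ orthogonal to linear functions with $G_f=G$, but then proceeds by invoking the classification of finite subgroups of $O(3)$ not containing $-\operatorname{Id}$ (the Type~I and Type~III groups) and writing down, for a representative of each conjugacy class, an explicit sum of homogeneous harmonic polynomials of odd degree $>1$ whose stabiliser is precisely that group. Your symmetrised-bump construction, by contrast, handles all finite $G$ with $-\operatorname{Id}\notin G$ uniformly and avoids the classification altogether; the key device is the observation that $f_0\circ T-f_0$ is simultaneously compactly supported and (via the projection identity) equal to a restricted linear function, hence identically zero by real-analyticity. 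This is more conceptual and in principle transports to higher-dimensional analogues, whereas the paper's approach has the compensating advantage of producing explicit spherical-harmonic examples (useful, for instance, if one wants concrete formulas or estimates). One minor point worth making explicit in your write-up: the assertion that $T$ sends $x_0$ to some $g_0x_0$ uses that $x_0$ is the \emph{unique} maximum of $\phi$ (so that the global maxima of $f_0$ are exactly the orbit $Gx_0$); this is implicit in ``generic bump'' but deserves a word.
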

	 
	 Notice that L. Green's theorem \cite{Gre} implies that the isometry group of a non-round Zoll metric on $\mathbb{S}^2$ cannot contain an orientation-reversing involution. Theorem \ref{thmD} confirms that this is the only obstruction in the case of finite groups (see Remark~\ref{rmkobst}).
	 
	The same strategy used to establish the equivariance of the construction of Theorem \ref{thmA} works for the constructions of Theorems A and E in \cite{ACM}. As an application, we show that it is possible to deform $(\mathbb{RP}^n,can)$ into new metrics that admit Zoll families of embedded minimal real projective hyperplanes:

\begin{thm}\label{thmE}
    Let $h$ be a smooth divergence-free traceless symmetric two-tensor on $(\RP^n,can)$, $n\geq 3$. Then there exists a smooth one-parameter family of Riemannian metrics $g_t$ on $\RP^n$, $|t|<\varepsilon$, with $g_t=can+th+o(t)$ as $t$ goes to zero, such that each $(\mathbb{RP}^n,g_t)$ contains a Zoll family of codimension one embedded minimal real projective spaces parametrized by $\mathbb{RP}^n$.
\end{thm}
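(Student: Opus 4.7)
The plan is to derive Theorem \ref{thmE} from a $\Z_2$-equivariant version of Theorem E in \cite{ACM}, transferred through the double cover $\pi\colon\S^n\to\RP^n$. The equivariance strategy developed for Theorem \ref{thmA} applies verbatim to the constructions in \cite{ACM}, as remarked in the introduction, and this is the mechanism I would invoke.

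First, I would lift the data: set $\tilde h=\pi^*h$. Since $\pi$ is a local isometry of the canonical metrics, $\tilde h$ is a smooth divergence-free traceless symmetric $(0,2)$-tensor on $(\S^n,can)$, and by construction it is invariant under the antipodal involution $A=-\mathrm{Id}\in O(n+1)$. I would then apply the $\{\pm\mathrm{Id}\}$-equivariant version of \cite{ACM}, Theorem E, to $\tilde h$. This yields a smooth one-parameter family $\tilde g_t=can+t\tilde h+o(t)$ of $A$-invariant metrics on $\S^n$ together with a Zoll family $\{\tilde\Sigma_\sigma\}_{\sigma\in\RP^n}$ of embedded minimal hyperspheres in $(\S^n,\tilde g_t)$, each realised as a small normal graph over a totally geodesic equator.

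The key remaining point is that each individual member $\tilde\Sigma_\sigma$ is itself $A$-invariant. Since $A$ fixes every totally geodesic equator setwise and preserves $\tilde g_t$, the image $A(\tilde\Sigma_\sigma)$ is a minimal hypersphere belonging to the same Zoll family and realised as a small normal graph over the same equator; the uniqueness in axiom (a) of the Zoll family definition then forces $A(\tilde\Sigma_\sigma)=\tilde\Sigma_\sigma$. Consequently $\tilde g_t$ descends to a Riemannian metric $g_t$ on $\RP^n$ with $g_t=can+th+o(t)$, and each $\tilde\Sigma_\sigma$ descends to an embedded projective hyperplane $\Sigma_\sigma=\pi(\tilde\Sigma_\sigma)$, minimal in $(\RP^n,g_t)$ because $\pi$ is a local isometry. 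Axioms (a) and (b) on $\RP^n$ then transfer from their counterparts on $\S^n$ by lifting tangent hyperplanes through $\pi$ and using $A$-invariance to check that the two antipodal preimages select the same $\tilde\Sigma_\sigma$, hence the same $\Sigma_\sigma$. The main obstacle is therefore the $\Z_2$-equivariant implementation of the Nash-Moser-Hamilton scheme underlying \cite{ACM}, which is exactly what the equivariant implicit function theorem developed in the present paper is designed to supply.
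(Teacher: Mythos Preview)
Your proposal is correct and follows essentially the same route as the paper: lift $h$ to an antipodally invariant tensor $\tilde h$ on $\S^n$, apply the $\{\pm\mathrm{Id}\}$-equivariant version of Theorem~E in \cite{ACM} (furnished by Theorem~\ref{thmequivariantHamIFTquadraticerror} together with Lemma~\ref{lem O(n+1) compatible with Sym2(Sn)}), and pass to the quotient. One small imprecision: your invocation of axiom~(a) to conclude $A(\tilde\Sigma_\sigma)=\tilde\Sigma_\sigma$ is not quite the right mechanism, since that axiom gives uniqueness at a prescribed $(p,\pi)$, not uniqueness among family members graphed over a fixed equator. The paper instead uses the equivariance of the full construction (both $\tilde g_t$ and the graphing function $\Phi_t$) to obtain $A(\tilde\Sigma_{[v]})=\tilde\Sigma_{[Av]}=\tilde\Sigma_{[-v]}=\tilde\Sigma_{[v]}$ directly, exactly as in Proposition~\ref{propparathmB}; since you already have the equivariant $\Gamma$ in hand, this correction is immediate.
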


	Recall that divergence-free, traceless, symmetric two-tensors with respect to $can$ parametrize infinitesimal directions that are transverse to the conformal class and to the orbit of $can$ under the action of diffeomorphism group by pull-back \cite{BerEbi}. In the conformal class of the canonical metric on $\mathbb{RP}^n$, $n\geq 3$, we do not know if there are metrics in $\mathcal{Z}$ aside from those with constant curvature (see, however, \cite{ACM}, Theorem B).
	
	Metrics on $\mathbb{RP}^n$ with minimal linear real projective hyperplanes were completely classified in \cite{ACM} (see Theorem 9.8). They form a subset $\mathcal{K}\subset \mathcal{Z}$, parametrized by an open cone in a vector space of finite dimension depending on $n$. Thus, for most choices of divergence-free, traceless tensors $h$ on $\mathbb{RP}^n$ (see their classification in \cite{BoucettaSpectredesLaplacien}), Theorem \ref{thmC} produces metrics on $\mathcal{Z}$ that are not isometric to any metric on $\mathcal{K}$ (see Remark \ref{rmkkilling}). \\

	As in \cite{ACM} and \cite{Guillemin}, all new constructions in this paper rely on some version of the Nash-Moser Implicit Function Theorem. We expect that our strategy to prove the equivariance of the constructions in the framework of tame Fr\'echet spaces of R. Hamilton \cite{HamiltonNashMoser} is adaptable to other contexts. For other constructions of Zoll-like objects with symmetries in magnetic dynamics, see the work of L. Asselle and G. Benedetti \cite{AssBen}, and of these two authors and M. Berti \cite{AssMaxBer}.

\subsection{Plan of the paper} After a preliminary Section \ref{section preliminary}, where we fix notations and perform some basic computations, we devote Section \ref{Section Construction} to the construction of the graphical deformations of $\mathbb{S}^n$ whose induced metrics lie in $\mathcal{Z}$. The construction closely follows the framework built in \cite{ACM}, and is an application of Nash-Moser Implicit Function Theorem with a quadratic error due to R. Hamilton (see \cite{HamiltonNashMoser}, Part III, Section 2). 

In Section \ref{subsection equivariance}, we first introduce some general notions pertaining to the theory of tame Fréchet spaces, and formulate the abstract equivariant versions of the aforementioned results of Hamilton (see Theorems \ref{thmequivarianceHam2} and \ref{thmequivariantHamIFTquadraticerror}). In Section \ref{subsectionapplication}, we check that the maps constructed in Section \ref{Section Construction} (and in \cite{ACM}) satisfy the hypotheses of these abstract results, yielding their equivariance with respect to the natural action of $O(n+1)$. Theorem \ref{thmE} follows, but the proof of Theorem \ref{thmB} requires additional work, which is carried out in Section \ref{subseccomputation}. We use rigidity results for isometric immersions in $\mathbb{R}^{n+1}$ of metrics with positive curvature to conclude that the intrinsic isometries of the metrics $g_t$ in Theorem \ref{thmB} are actually restrictions to $\iota_t(\mathbb{S}^n)$ of isometries of $\mathbb{R}^{n+1}$. This allows some control on sequences of isometries of the metrics $g_{t}$ for small $t$. As an application of Theorem \ref{thmB}, we prove Theorem \ref{thmC}.

Section \ref{sectionS2} contains the proof of Theorem \ref{thmD}. By Theorem \ref{thmB}, it is enough to find smooth odd functions on $\mathbb{S}^2$ with prescribed finite stabilizer subgroup of $O(3)$ not containing $-Id$. For each group in the classification of such groups, we exhibit the desired function explicitly.

Finally, in the Appendix, we collect some formulas for the second fundamental form of star-shaped embeddings, compare the metrics constructed in Theorem C of \cite{ACM} and Theorem \ref{thmB}, and discuss their isometry groups.

\section{Preliminaries and notations}\label{section preliminary}
	The basic concepts and notations introduced in this preliminary section are adapted from and motivated by \cite{ACM}.

	Let $can$ be the canonical metric on the sphere $\S^n=\{x\in\R^{n+1}:|x|=1\}$. The set of smooth odd functions with respect to the antipodal map $x\mapsto-x$ will be denoted by $C^\infty_{odd}(\S^n)$.

	Given $v\in \S^n$, $\Sigma_v\subseteq\S^n$ denotes the equator orthogonal to $v$. The equators are totally geodesic hypersurfaces of $(\S^n,can)$, and they are smoothly parametrized by the real projective space $\RP^n=\mathbb{S}^n/x\sim-x$, since $\Sigma_v=\Sigma_{-v}$. Hence, for $\sigma=[v]:=\{v,-v\}\in\RP^n$, we write $\Sigma_\sigma=\Sigma_v$. In order to maintain consistency throughout this work, we only use Greek letters for the elements of $\RP^n$.
	
	We want to smoothly deform $\mathbb{S}^n$ inside the Euclidean space $\mathbb{R}^{n+1}$, together with its standard Zoll family of totally geodesic equators $\{\Sigma_\sigma\}$, in such a way that the deformed family, inside the deformed embedding of $\mathbb{S}^n$, is a Zoll family of codimension one embedded minimal spheres with respect to the induced metric from $\mathbb{R}^{n+1}$. This is what we will achieve with the proof of Theorem \ref{thmA}. \\

	Given a smooth function $\psi\in C^\infty(\S^n)$, let $g_\psi$ denote the Riemannian metric on $\S^n$ given by 
	\begin{equation*}
		g_\psi:=e^{2\psi}(can+d\psi\otimes d\psi).
	\end{equation*}
	Equivalently, $g_\psi$ is the induced metric by the star-shaped embedding $\iota=\iota_\psi:\S^n\rightarrow\R^{n+1}$ given by 
	\begin{equation}\label{eqstarshapped2}
		\iota(x)=e^{\psi(x)}x.
	\end{equation}
	Clearly, $g_0=can$ and $\iota_0$ is the standard inclusion of $\mathbb{S}^n$ in $\mathbb{R}^{n+1}$.

	After observing that functions $\psi\in C^{\infty}(\mathbb{S}^n)$ correspond to metrics on $\mathbb{S}^n$ that are induced by star-shaped embeddings as in \eqref{eqstarshapped2}, let us describe how graphical perturbations of the family of equators are in one-to-one correspondence with certain functions on the unit tangent bundle of $\mathbb{S}^n$, that is, on the manifold 
	\begin{equation*}
		T_1\S^n=\{(p,v)\in\S^n\times\S^n\,|\,\inner{p}{v}=0\}.
	\end{equation*}
	
	Let $C^\infty_{*,odd}(T_1\S^n)$ be the set of functions $\Phi\in C^{\infty}(T_1\S^n)$ that are odd with respect to the second variable, that is, $\Phi(x,-v)=-\Phi(x,v)$. 
Given $\Phi\in C^\infty_{*,odd}(T_1\S^n)$, consider the map $F=F^\Phi:T_1\S^n\rightarrow\S^n$ defined by
$$F(x,v)=\cos(\Phi(x,v))x+\sin(\Phi(x,v))v.$$
Observe that $F_v^\Phi=F(\cdot,v):\Sigma_v\rightarrow\S^n$ is an embedding for $\Phi$ small ($|\Phi|<\pi/2$ is enough). Its image is denoted 
\begin{equation*}
	\Sigma_v(\Phi):=F^\Phi_v(\Sigma_v)\subseteq\S^n,
\end{equation*}
	Since $F_v^\Phi=F_{-v}^\Phi$, we have $\Sigma_v(\Phi)=\Sigma_{-v}(\Phi)$, so we also denote this set by $\Sigma_\sigma(\Phi)$ for $\sigma=[v]\in \mathbb{RP}^n$. We have $\{\Sigma_\sigma(0)\}=\{\Sigma_\sigma\}$, and the family $\{\Sigma_\sigma(\Phi)\}$ is the set of \textit{graphical deformations}, determined by $\Phi$, of the family of equators.
	
	Given any such family, for every $y=F_v^\Phi(x)\in \Sigma_v(\Phi)$ we define the variational vector 
\begin{equation}\label{eqvarivecton}
	n_v(y)=n^{\Phi}_v(y):=\frac{d}{dt}\Big|_{t=0}F^{\Phi+t}_v(x)=-\sin(\Phi_v(x))x+\cos(\Phi_v(x))v\in T_y\S^n.
\end{equation}
Notice that $n_v(y)\notin T_y(\Sigma_v(\Phi))$.
Intuitively, $n_v$ plays the role of a transversal vector to $\Sigma_v(\Phi)$, adjusted to the variation (beware that it might be non-orthogonal to $T_y(\Sigma_v(\Phi))$ with respect to a given metric $g_\psi$). 

Given a graphical deformation associated to $\Phi\in C^\infty_{*,odd}(T_1\S^n)$, we define the {\it incidence set} as 
\begin{equation*}
	[F(\Phi)]:=\{(y,\sigma)\in\S^n\times\RP^n\,|\,y\in\Sigma_\sigma(\Phi)\},
\end{equation*}
which is a compact hypersurface of $\S^n\times\RP^n$ for $\Phi$ small enough in the $C^1$ topology.
We also define the {\it dual hypersurface} associated to $p\in \mathbb{S}^n$ by
$$\Sigma_p^*(\Phi):=\{\sigma\in\RP^n\,|\,(p,\sigma)\in[F(\Phi)]\}.$$
For $\Phi$ close to zero in the $C^1$ topology, $\Sigma_p^*(\Phi)\subseteq\RP^n$ is a regular hypersurface and is diffeomorphic to $\RP^{n-1}$ (note that $\Sigma^*_p(0)=\Sigma_p/\{\pm Id\}$ is simply the real projective plane orthogonal to $p\in \mathbb{S}^n$).

	Assuming further that $\Phi$ is $C^2$-close to zero, the family $\Sigma_\sigma(\Phi)$ satisfies the topological axioms in the definition of a Zoll family of minimal hypersurfaces (see \cite{ACM}, Proposition 2.4).

	Thus, in this setup, we search for pairs $(\psi,\Phi)\in C^\infty(\S^n)\times C^\infty_{*,odd}(T_1\S^n)$, sufficiently close to $(0,0)$ in the smooth topology, such that $\Sigma_\sigma(\Phi)$ is a minimal hypersurface of $(\S^n,g_\psi)$ for all $\sigma\in\mathbb{RP}^n$. Equivalently, such that the family $\{\iota_\psi(\Sigma_\sigma(\Phi))\}$ is a Zoll family of minimal hypersurfaces on $\iota_\psi(\mathbb{S}^n)\subset \mathbb{R}^{n+1}$ with the induced metric by the embedding $\iota_\psi : \mathbb{S}^n\rightarrow \mathbb{R}^{n+1}$ as in \eqref{eqstarshapped2}.

\section{Construction}\label{Section Construction}

	The proof of Theorem \ref{thmA} follows closely the strategy developed in \cite{ACM}. In this section we adapt that framework to our setting, and prove the key formulas that allow the verification of the assumptions needed to apply the Nash-Moser-Hamilton theory of tame maps between tame Fr\`echet spaces.

\subsection{The Area operator}\label{subsectarea}
	The condition that $\Sigma_v(\Phi)$ is a minimal hypersurface in $(\S^n,g_\psi)$ is variational. Thus, we begin by analyzing the properties of the area functional in the class of graphical deformations of $\Sigma_v$. 
	
	Given $\psi\in C^\infty(\S^n)$ and $\Phi\in C^\infty_{*,odd}(T_1\S^n)$, the area functional is a function on $\mathbb{S}^n$ that computes, for every $v\in \mathbb{S}^n$,  
	\begin{equation*}
		\mathcal{A}_v(\psi,\Phi):=\text{Area}_{g_\psi}(\Sigma_v(\Phi))=\int_{\Sigma_v(\Phi)}dA_{g_\psi}.
	\end{equation*}
	Clearly, $\mathcal{A}_{-v}=\mathcal{A}_{v}$, so that we can think of $\mathcal{A}(\psi,\Phi)$ as a function on $\mathbb{RP}^n$.
	
	By a change of variables, this functional can be expressed as an integral over $(\Sigma_v,can)$. In order to compute it, we write $\Phi_v:\Sigma_v\rightarrow\R$ for the map $\Phi_v=\Phi(\cdot,v)$ on $\Sigma_v$, and let $\nabla^v$ be the gradient operator with respect to the canonical metric acting on smooth functions on $\Sigma_v$. Finally, given any smooth function $f:\S^n\rightarrow\R$, we write $f_v=f\circ F^{\Phi}_v:\Sigma_v\rightarrow\R$.
\begin{lem}\label{lemareafunctJ}
    The area functional is given by
    $$\mathcal{A}_v(\psi,\Phi)=\int_{\Sigma_v}J_{v}(\psi_v(x),\Phi_v(x),\nabla^v\psi_v(x),\nabla^v\Phi_v(x))dA_{can}(x),$$
    where $J_v$ is the smooth map, defined on the Whitney sum $\R^2\oplus T\Sigma_v\oplus T\Sigma_v$, given by the formula
    $$J_v(p,q,u,w)=e^{(n-1)p}\cos(q)^{n-3}\big[|u\wedge w|^2+\cos(q)^2(\cos(q)^2+|u|^2+|w|^2)\big]^{\frac{1}{2}}.$$
\end{lem}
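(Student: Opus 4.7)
The plan is to compute the area of $\Sigma_v(\Phi)$ as an integral over $\Sigma_v$ by pulling back the metric $g_\psi$ via the parametrization $F=F^\Phi_v$ and extracting the volume form. Concretely, since $F$ is an embedding for $\Phi$ small,
\begin{equation*}
\mathcal{A}_v(\psi,\Phi)=\int_{\Sigma_v}dA_{F^*g_\psi},
\end{equation*}
and it suffices to write $dA_{F^*g_\psi}$ as a density with respect to $dA_{can}$ on $(\Sigma_v,can)$.

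To compute $F^*g_\psi$, I would first differentiate $F(x)=\cos(\Phi_v(x))\,x+\sin(\Phi_v(x))\,v$ at $x\in \Sigma_v$: for $X\in T_x\Sigma_v$,
\begin{equation*}
dF_x(X)=\cos(q)\,X-\sin(q)\,\langle w,X\rangle\,x+\cos(q)\,\langle w,X\rangle\,v,
\end{equation*}
with $q=\Phi_v(x)$ and $w=\nabla^v\Phi_v(x)$. Using $|x|=|v|=1$, $\langle x,v\rangle=0$, and $\langle X,x\rangle=\langle X,v\rangle=0$ for $X\in T_x\Sigma_v$, all cross terms in $\langle dF(X),dF(Y)\rangle_{\mathbb{R}^{n+1}}$ vanish and the $\sin^2(q)+\cos^2(q)$ contributions collapse, giving $F^*can=\cos^2(q)\,g_{\Sigma_v}+d\Phi_v\otimes d\Phi_v$. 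Since $F^*d\psi=d(\psi\circ F)=d\psi_v$ and $F^*e^{2\psi}=e^{2\psi_v}$, one gets
\begin{equation*}
F^*g_\psi=e^{2p}\bigl(\cos^2(q)\,g_{\Sigma_v}+d\Phi_v\otimes d\Phi_v+d\psi_v\otimes d\psi_v\bigr),
\end{equation*}
with $p=\psi_v(x)$. In an orthonormal frame of $(\Sigma_v,can)$, this matrix reads $e^{2p}\bigl(\cos^2(q)\,I+uu^{T}+ww^{T}\bigr)$ where $u=\nabla^v\psi_v(x)$.

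The main (and only mildly technical) step is then the determinant of a rank-two perturbation of a scalar matrix. Applying the matrix determinant lemma with $c=\cos^2(q)$,
\begin{equation*}
\det\bigl(cI+uu^{T}+ww^{T}\bigr)=c^{n-3}\det\!\begin{pmatrix}c+|u|^2 & \langle u,w\rangle\\ \langle u,w\rangle & c+|w|^2\end{pmatrix}=c^{n-3}\bigl[c^2+c(|u|^2+|w|^2)+|u\wedge w|^2\bigr],
\end{equation*}
using the Lagrange identity $|u|^2|w|^2-\langle u,w\rangle^2=|u\wedge w|^2$. Taking square roots and multiplying by $e^{(n-1)p}$ yields
\begin{equation*}
dA_{F^*g_\psi}=e^{(n-1)p}\cos^{n-3}(q)\bigl[|u\wedge w|^2+\cos^2(q)\bigl(\cos^2(q)+|u|^2+|w|^2\bigr)\bigr]^{1/2}dA_{can},
\end{equation*}
which is exactly the integrand in the claim. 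I do not anticipate any serious obstacle beyond keeping track of the orthogonality relations between $x$, $v$, and $T_x\Sigma_v$; once these are used to kill the cross terms, the computation is purely linear-algebraic and the formula for $J_v$ falls out.
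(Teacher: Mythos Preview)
Your proof is correct and takes a genuinely different route from the paper's. The paper proceeds by a case analysis: it first assumes $\nabla^v\psi_v$ and $\nabla^v\Phi_v$ are linearly independent, builds an adapted orthogonal frame $\{e_0,\ldots,e_{n-2}\}$ with $e_0=\nabla^v\psi_v$ and $e_1$ in the span of the two gradients, computes all entries $g_\psi(dF_v e_i,dF_v e_j)$ by hand, and extracts the determinant; it then repeats the computation in the cases where the gradients are parallel or both vanish, and patches the three cases together by a density/continuity argument.

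You bypass this entirely. By first computing $F^*can=\cos^2(q)\,g_{\Sigma_v}+d\Phi_v\otimes d\Phi_v$ in a coordinate-free way and then recognizing $F^*g_\psi$ in an orthonormal frame as $e^{2p}(cI+uu^T+ww^T)$, you reduce the problem to a single application of the matrix determinant lemma (Sylvester's identity), which handles all ranks of the perturbation $uu^T+ww^T$ uniformly. This is shorter, avoids any case distinction, and makes the structure of the formula (a rank-two update of a scalar matrix) transparent. The only implicit hypothesis is $c=\cos^2(q)\neq 0$, which is guaranteed by $|\Phi|<\pi/2$; once the polynomial identity for the determinant is established on this open set, it of course agrees with the paper's formula everywhere it makes sense.
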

\begin{proof}
    Since both $F_v^{*}dA_{g_\psi}$ and $dA_{can}$ are top forms of $\Sigma_v$, 
    \begin{equation}\label{pull-back area}
        F_v^{*}dA_{g_\psi}=J_vdA_{can},
    \end{equation}
    for a certain smooth function $J_v:\Sigma_v\rightarrow\R$.
    Changing variables on the integral, we only need to verify that $J_v$ is given by the formula presented in the statement of this Lemma.
    
    Suppose first that $\nabla^v\psi_v$ and $\nabla^v\Phi_v$ are linearly independent in a neighborhood $U\subseteq\Sigma_v$. 
    With respect to the canonical metric, consider a local orthogonal frame $\{e_i\}_{i=0}^{n-2}$ of $U$ where 
    $$e_0:=\nabla^v\psi_v, \quad e_1:=|\nabla^v\psi_v|^2\nabla^v\Phi_v-\inner{\nabla^v\psi_v}{\nabla^v\Phi_v}\nabla^v\psi_v,$$
    and the remaining $\{e_i\}_{i=2}^{n-2}$ are a local orthonormal frame of $\{e_0,e_1\}^{\perp}\subseteq T\Sigma_v$.
    Then, evaluating \eqref{pull-back area} in this frame, we have that 
    \begin{align}
        J_v|\nabla^v\psi_v|^2|\nabla^v\psi_v\wedge\nabla^v\Phi_v|&=dA_{g_\psi}(dF_v(e_0),dF_v(e_0),\ldots,dF_v(e_{n-2}))\nonumber\\
        &=\sqrt{\det(g_{\psi}(dF_v(e_i),dF_v(e_j))}.\label{Jv when both gradients are li}
    \end{align}
    Notice that 
    $$d\psi(dF_v(e_j))=d\psi_v(e_j)=\inner{\nabla^v\psi_v}{e_j},$$ 
    and, using the definition of \eqref{eqvarivecton},
    $$dF_v(e_j)=\inner{\nabla^v\Phi_v}{e_j}n_v+\cos(\Phi_v)e_j.$$ 
    Hence, the entries of the matrix $G_\psi=(g_\psi(e_i,e_j))_{ij}$ are
    \begin{align*}
        g_{\psi}(e_0,e_0)&=e^{2\psi_v}(\langle dF_ve_0,dF_ve_0\rangle+|\nabla^v\psi_v|^4)\\
        &=e^{2\psi_v}(\inner{\nabla^v\Phi_v}{\nabla^v\psi_v}^2+\cos(\Phi_v)^2|\nabla^v\psi_v|^2+|\nabla^v\psi_v|^4),\\
        g_{\psi}(e_0,e_1)&=e^{2\psi_v}\langle dF_ve_0,dF_ve_1\rangle \\
        &=e^{2\psi_v}(|\nabla^v\Phi_v\wedge\nabla^v\psi_v|^2\inner{\nabla^v\Phi_v}{\nabla^v\psi_v}),\\
        g_{\psi}(e_1,e_1)&=e^{2\psi_v}\langle dF_ve_1,dF_ve_1\rangle \\
        &=e^{2\psi_v}(|\nabla^v\Phi_v\wedge\nabla^v\psi_v|^4+\cos(\Phi_v)^2|\nabla^v\psi_v|^2|\nabla^v\Phi_v\wedge\nabla^v\psi_v|^2),\\
        g_{\psi}(e_i,e_j)&=e^{2\psi_v}g_0(dF_ve_i,dF_ve_j)=\delta_{ij}e^{2\psi_v}\cos(\Phi_v)^2,\quad\text{for }j\geq2.
    \end{align*}
    Using these formulas, the determinant of $G_\psi$ is computed, and the desired formula for $J_v$ is deduced from \eqref{Jv when both gradients are li}, at least at each open neighborhood $U\subset \Sigma_v$ where  $\nabla^v\Phi_v$ and $\nabla^v\psi_v$ are linearly independent. 

    Suppose now that $U\subseteq\Sigma_v$ is an open subset where $\nabla^v\Phi_v$ and $\nabla^v\psi_v$ span a line. Consider a local orthogonal frame $\{e_i\}_{i=0}^{n-2}$ of $U$ with $\nabla^v\Phi_v, \nabla^v\psi_v\in\text{span}\{e_0\}$. Similar computations of the determinant of $G=(g_\psi(e_i,e_j))_{ij}$ yield a direct verification that the formula for $J_v$ works in this case as well.
    
    Finally, consider the situation in which $U\subseteq\Sigma_v$ is an open subset where $\nabla^v\Phi_v=\nabla^v\psi_v=0$. 
    In this case, $\phi_v$ and $\psi_v$ are constant, so we simply scale the metric of $\Sigma_v\cap U$ by the factor $e^{\psi_v}\cos(\phi_v)$.
    It is straightforward to verify that the formula of $J_v$ also holds in this situation.
    
    These three cases show that the formula for $J_v$ holds on a dense subset of $\Sigma_v$, so they coincide everywhere by continuity. 
\end{proof}

	If $\Sigma_v(\Phi)$ is a minimal hypersurface of $(\S^n,g_\psi)$, then it is a critical point of the area functional. In particular, in this case the pair $(\psi,\Phi)$ is such that $\mathcal{A}(\psi,\Phi)$ has a zero derivative in its second entry. Before analyzing what does this condition mean, we compute :
	
\begin{lem}\label{lemeulerlagrangeoperator}
	For every $v\in \mathbb{S}^n$, 
\begin{equation*}
	\frac{d}{dt}\Big|_{t=0}\mathcal{A}_v(\psi,\Phi+t\phi)=\int_{\Sigma_v}\mathcal{H}_v(\psi,\Phi)\phi_vdA_{can},\quad\forall\phi\in C^\infty_{*,odd}(T_1\S^n),
\end{equation*}
where 
\begin{align}\label{eqformulaofH}
\mathcal{H}_v(\psi,\Phi)=&D_1J_v(\psi_v,\Phi_v,\nabla^v\psi_v,\nabla^{v}\Phi_v)\partial_n\psi_v \nonumber \\
    & 
    +D_2J_v(\psi_v,\Phi_v,\nabla^v\psi_v,\nabla^{v}\Phi_v ) \nonumber\\
    &-div_{(\Sigma_v,can)}\big(D_3J_v(\psi_v,\Phi_v,\nabla^v\psi_v,\nabla^{v}\Phi_v)\big)\partial_n\psi_v
    \nonumber \\
    & -div_{(\Sigma_v,can)}\big(D_4J_v(\psi_v,\Phi_v,\nabla^v\psi_v,\nabla^{v}\Phi_v)\big).
\end{align}
Here, $\partial_n\psi_v$ is the {\it derivative of $\psi$ in the $n_v$ direction}, and is given by
\begin{equation*}
	\partial_n\psi_v:=\inner{\nabla\psi}{n_v}\circ F_v:\Sigma_v\rightarrow\R,
\end{equation*}
where $\nabla\psi$ is the gradient of $\psi$ on $(\mathbb{S}^n,can)$, and  $n_v$ is the transverse vector field defined in \eqref{eqvarivecton}.
\end{lem}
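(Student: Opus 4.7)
The plan is to differentiate the integral representation of $\mathcal{A}_v$ from Lemma \ref{lemareafunctJ} under the integral sign using the chain rule, and then apply integration by parts on the closed manifold $\Sigma_v$ (diffeomorphic to $\S^{n-1}$, hence boundaryless) to collect everything as a multiple of $\phi_v$.

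First, I would compute the $t$-derivative at $t=0$ of each of the four arguments of $J_v$ when $\Phi$ is replaced by $\Phi+t\phi$. Writing $\Phi^{(t)}_v=\Phi_v+t\phi_v$ gives immediately $\partial_t|_{t=0}\Phi^{(t)}_v=\phi_v$ and $\partial_t|_{t=0}\nabla^v\Phi^{(t)}_v=\nabla^v\phi_v$. For the remaining two arguments, one needs the identity
\begin{equation*}
\partial_t|_{t=0}F^{\Phi+t\phi}_v(x)=\phi_v(x)\,n_v\bigl(F^\Phi_v(x)\bigr),
\end{equation*}
which follows from a direct differentiation of the definition of $F^{\Phi}_v$ together with the formula \eqref{eqvarivecton} for the variational vector $n_v$. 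The chain rule applied to $\psi^{(t)}_v:=\psi\circ F^{\Phi+t\phi}_v$ then gives $\partial_t|_{t=0}\psi^{(t)}_v=\phi_v\cdot\partial_n\psi_v$ and, consequently, $\partial_t|_{t=0}\nabla^v\psi^{(t)}_v=\nabla^v(\phi_v\,\partial_n\psi_v)$.

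Substituting these identities into the chain-rule derivative of the integrand of $\mathcal{A}_v$ yields
\begin{equation*}
\partial_t|_{t=0}\mathcal{A}_v(\psi,\Phi+t\phi)=\int_{\Sigma_v}\!\bigl[D_1J_v\,\phi_v\partial_n\psi_v+D_2J_v\,\phi_v+D_3J_v\cdot\nabla^v(\phi_v\partial_n\psi_v)+D_4J_v\cdot\nabla^v\phi_v\bigr]dA_{can},
\end{equation*}
with every partial derivative of $J_v$ evaluated at $(\psi_v,\Phi_v,\nabla^v\psi_v,\nabla^v\Phi_v)$. Since $\Sigma_v$ has no boundary, integration by parts on $(\Sigma_v,can)$ converts the third term into $-div_{(\Sigma_v,can)}(D_3J_v)\,\phi_v\partial_n\psi_v$ and the fourth into $-div_{(\Sigma_v,can)}(D_4J_v)\,\phi_v$. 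Factoring out $\phi_v$ produces exactly formula \eqref{eqformulaofH} for $\mathcal{H}_v(\psi,\Phi)$.

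The only step that requires genuine care is the computation of $\partial_t|_{t=0}\psi^{(t)}_v$: one must recognise that the variation $t\mapsto\Phi+t\phi$ affects $\psi_v$ only through the map $F^{\Phi+t\phi}_v$ (never through $\psi$ itself), and that the infinitesimal generator of this one-parameter family of maps is precisely $\phi_v\,n_v$, so that the result is the directional derivative of $\psi$ in the direction $n_v$ scaled by $\phi_v$. Once this is clear, the remainder is a routine chain rule calculation followed by integration by parts, with no boundary contribution because $\Sigma_v$ is closed.
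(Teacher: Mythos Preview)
Your proof is correct and follows essentially the same route as the paper: differentiate the integral representation from Lemma~\ref{lemareafunctJ} via the chain rule, use the identity $\partial_t|_{t=0}(\psi\circ F_v^{\Phi+t\phi})=\partial_n\psi_v\,\phi_v$ (and its gradient version), and then integrate by parts on the closed hypersurface $\Sigma_v$ to arrive at \eqref{eqformulaofH}. The paper's proof additionally records the explicit expressions for $D_iJ_v$ (formulas \eqref{eqJ1}--\eqref{eqJ4}), but these are not needed for the lemma itself and are only used in later computations.
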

\begin{proof}
Given $\phi\in C^\infty_{*,odd}(T_1\S^n)$, consider the derivative \begin{align*}
    \frac{d}{dt}\Big|_{t=0}\mathcal{A}_v(\psi,\Phi+t\phi &) =  \int_{\Sigma_v} \hspace{-1mm}\Big[D_1J_v(\psi_v,\Phi_v,\nabla^v\psi_v,\nabla^{v}\Phi_v)\frac{d}{dt}\Big|_{t=0}\hspace{-1mm}(\psi\circ F_v^{\Phi+t\phi}) \\
    & +D_2J_v(\psi_v,\Phi_v,\nabla^v\psi_v,\nabla^{v}\Phi_v)\phi_v\\
    & +\Big\langle D_3J_v(\psi_v,\Phi_v,\nabla^v\psi_v,\nabla^{v}\Phi_v),\frac{d}{dt}\Big|_{t=0}\hspace{-1mm}\big(\nabla^v(\psi\circ F_v^{\Phi+t\phi})\big)\Big\rangle\\
    &+\inner{D_4J_v(\psi_v,\Phi_v,\nabla^v\psi_v,\nabla^{v}\Phi_v)}{\nabla^v\phi_v}\Big]
    dA_{can},
\end{align*}
where $\phi_v=\phi(\cdot,v)$, and $D_iJ_v$ denotes the derivative of $J_v$ with respect to the $i^{th}$-coordinate. A computation gives
\begin{align}
    D_1J_v(p,q,u,w)&=(n-1)J_v(p,q,u,w), \label{eqJ1}\\
    D_2J_v(p,q,u,w)&=\tan(q)J_v(p,q,u,w)\bigg((n-3)+\nonumber\\
    &\quad\quad\quad\quad\quad\quad+\frac{\cos(q)^2(2\cos(q)^2+|u|^2+|w|^2)}{|u\wedge w|^2+\cos(q)^2(\cos(q)^2+|u|^2+|w|^2)}\bigg), \label{eqJ2}\\
    D_3J_v(p,q,u,w)&=\frac{J_v(p,q,u,w)((|w|^2+\cos(q)^2)u-\inner{w}{u}w)}{|u\wedge w|^2+\cos(q)^2(\cos(q)^2+|u|^2+|w|^2)}, \label{eqJ3}\\
    D_4J_v(p,q,u,w)&=D_3J_v(p,q,w,u). \label{eqJ4}
\end{align}
Since
\begin{equation*}
	\frac{d}{dt}\Big|_{t=0}(\psi\circ F_v^{\Phi+t\phi})=\partial_n\psi_v\phi_v \quad \text{and}\quad \frac{d}{dt}\Big|_{t=0}\big(\nabla^v(\psi\circ F_v^{\Phi+t\phi})\big)=\nabla^v(\partial_n\psi_v\phi_v),
\end{equation*}
formula \eqref{eqformulaofH} is obtained after applying Stokes' theorem.
\end{proof}

	The function $\mathcal{H}(\psi,\Phi)$ belongs to $C^\infty_{*,odd}(T_1\S^n)$. It depends on $\Phi$ and $\psi$ up to their second-order derivatives. While it is not exactly the mean curvature of $\Sigma_\sigma(\Phi)$ in $(\mathbb{S}^n,g_\psi)$, we have
	\begin{lem}\label{lemH=0meansmin}
		$\mathcal{H}(\psi,\Phi)\equiv 0$ if and only if $\Sigma_v(\Phi)$ is a minimal hypersurface in $(\mathbb{S}^n,g_\psi)$ for every $v\in \mathbb{S}^n$.
	\end{lem}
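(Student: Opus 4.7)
The plan is to compare the expression for the first variation of $\mathcal{A}_v(\psi,\cdot)$ provided by Lemma \ref{lemeulerlagrangeoperator} with the classical first variation of area formula from submanifold geometry, and deduce a pointwise algebraic identity relating $\mathcal{H}_v(\psi,\Phi)$ to the mean curvature of $\Sigma_v(\Phi)$ in $(\S^n,g_\psi)$.

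For fixed $v\in\S^n$, the family $t\mapsto F_v^{\Phi+t\phi}$ is a smooth deformation of $F_v^\Phi:\Sigma_v\to(\S^n,g_\psi)$ whose initial variational vector field is $\phi_v\,(n_v\circ F_v^\Phi)$, where $\phi_v=\phi(\cdot,v)$ and $n_v$ is the transverse field of \eqref{eqvarivecton} (this is a direct computation from the definition of $F_v^\Phi$). Letting $H_v^{\psi,\Phi}$ denote the mean curvature of $\Sigma_v(\Phi)\subset(\S^n,g_\psi)$ and $\nu_v$ a smooth $g_\psi$-unit normal, the classical first variation of area for a closed hypersurface reads
\begin{equation*}
    \frac{d}{dt}\Big|_{t=0}\mathcal{A}_v(\psi,\Phi+t\phi)=-\int_{\Sigma_v(\Phi)}H_v^{\psi,\Phi}\,\phi_v\,\inner{n_v}{\nu_v}_{g_\psi}\,dA_{g_\psi}.
\end{equation*}
Pulling back by $F_v^\Phi$ using \eqref{pull-back area}, and matching against the formula for the same derivative given by Lemma \ref{lemeulerlagrangeoperator}, I obtain
\begin{equation*}
    \int_{\Sigma_v}\mathcal{H}_v(\psi,\Phi)\,\phi_v\,dA_{can}=-\int_{\Sigma_v}\bigl(H_v^{\psi,\Phi}\circ F_v^\Phi\bigr)\,\phi_v\,\bigl(\inner{n_v}{\nu_v}_{g_\psi}\circ F_v^\Phi\bigr)\,J_v\,dA_{can}
\end{equation*}
for every $\phi\in C^\infty_{*,odd}(T_1\S^n)$.

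To pass from this integral identity to a pointwise one, I still need the restriction map $\phi\in C^\infty_{*,odd}(T_1\S^n)\mapsto \phi_v\in C^\infty(\Sigma_v)$ to be surjective. This is a routine bump-function construction: any $h\in C^\infty(\Sigma_v)$ extends to a smooth function on a neighborhood of $\Sigma_v\times\{v\}$ in $T_1\S^n$, which may then be cut off smoothly away from $\pm v$ in the second factor and antipodalized with opposite sign to produce the desired $\phi$. Combined with the integral identity above, this yields the pointwise equality
\begin{equation*}
    \mathcal{H}_v(\psi,\Phi)=-\bigl(H_v^{\psi,\Phi}\circ F_v^\Phi\bigr)\,\bigl(\inner{n_v}{\nu_v}_{g_\psi}\circ F_v^\Phi\bigr)\,J_v\quad\text{on }\Sigma_v.
\end{equation*}

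The last step is to check that the two factors multiplying $H_v^{\psi,\Phi}\circ F_v^\Phi$ on the right-hand side are nowhere vanishing. Positivity of $J_v$ is immediate from the explicit formula in Lemma \ref{lemareafunctJ} as long as $|\Phi|<\pi/2$. Non-vanishing of $\inner{n_v}{\nu_v}_{g_\psi}\circ F_v^\Phi$ is equivalent to the transversality of $n_v$ to $\Sigma_v(\Phi)$, which holds for $\Phi$ sufficiently $C^1$-small, since at $\Phi=0$ the vector $n_v$ reduces to the constant vector $v$, itself $can$-orthogonal to $\Sigma_v$. Under these conditions, the displayed identity shows that $\mathcal{H}_v(\psi,\Phi)\equiv 0$ is equivalent to $H_v^{\psi,\Phi}\equiv 0$ for every $v$, which is the claim. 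The only point that requires attention is the uniformity in $v$ of the smallness needed for transversality, but this is automatic since $T_1\S^n$ is compact and $n_v$ varies continuously with $(x,v)$.
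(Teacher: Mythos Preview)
Your proof is correct and follows essentially the same approach as the paper: both compare the Euler--Lagrange formula of Lemma \ref{lemeulerlagrangeoperator} with the classical first variation of area to obtain the pointwise identity $\mathcal{H}_v(\psi,\Phi)=(\hat{\mathcal{H}}_v\circ F_v)(c\circ F_v)J_v$ (up to a sign convention), and then use the nonvanishing of $J_v$ and of $c=g_\psi(n_v,\nu_v)$. Your explicit verification that $\phi\mapsto\phi_v$ is surjective is a point the paper leaves implicit, and your transversality argument could in fact be strengthened to $|\Phi|<\pi/2$ (as noted just after \eqref{eqvarivecton}) rather than only $C^1$-small $\Phi$.
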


\begin{proof} Let $N_v$ be a normal unit vector field on $\Sigma_v(\Phi)$, with respect to the metric $g_\psi$, and let $\hat{\mathcal{H}}_v(y)$ be the mean curvature of $\Sigma_v(\Phi)$ in $(\S^n,g_\psi)$ at the point $y$ with respect to $N_v$ (we use the convention that $\vec{H}=-HN$ is the mean curvature vector). Notice that $c(y):=g_\psi(n_v(y),N_v(y))\neq 0$, since $n_v(y)\notin T_y(\Sigma_v(\Phi))$.
By definition of $\mathcal{A}_v$ and the first variation of area formula,
\begin{align*}
    \frac{d}{dt}\Big|_{t=0}\mathcal{A}_v(\psi,\Phi+t\phi)&=\int_{\Sigma_v(\Phi)}\hat{\mathcal{H}}_v(y)g_{\psi}\Big(\frac{d}{dt}\Big|_{t=0}(F_v^{\Phi_v+t\phi_v}),N_y\Big)dA_{g_\psi}(y)\\
    &=\int_{\Sigma_v(\Phi)}\hat{\mathcal{H}}_v(y)g_{\psi}\Big(\phi_v(F_v^{-1}(y))n_y,N_y\Big)dA_{g_\psi}(y)\\
    &=\int_{\Sigma_v(\Phi)}\hat{\mathcal{H}}_v(y)c(y)\phi_v(F_v^{-1}(y))dA_{g_\psi}(y)\\
    &=\int_{\Sigma_v}\hat{\mathcal{H}}_v(F_v(x))c(F_v(x))\phi_v(x)J_vdA_{can}(x).
\end{align*}

Hence
\begin{equation}\label{eqcalHh}
	\mathcal{H}_v(\psi,\Phi)=(\hat{\mathcal{H}}_v\circ F_v)(c\circ F_v)J_v,
\end{equation}
from which it follows that $\Sigma_v(\Phi)\subseteq(\S^n,g_\psi)$ has zero mean curvature if and only if $\mathcal{H}_v(\psi,\Phi)\equiv0$.
\end{proof}

	In view of Lemma \ref{lemH=0meansmin}, from now on we look for  pairs $(\psi,\Phi)\in C^\infty(\S^n)\times C^\infty_{*,odd}(T_1\S^n)$, sufficiently close to $(0,0)$ in the $C^2$ topology, such that $\mathcal{H}(\psi,\Phi)$ vanish identically. In this case, $\{\Sigma_\sigma(\Phi)\}$, $\sigma\in \mathbb{RP}^n$, is a Zoll family of codimension one embedded minimal spheres in $(\mathbb{S}^n,g_\psi)$.

\subsection{The Funk transform and its dual}\label{subsectfunk}

In the previous section, we analyzed the partial derivative $D_2\mathcal{A}(\psi,\Phi)$ in order to characterize the condition that all $\Sigma_v(\Phi)\subseteq(S^n,g_\psi)$ are minimal hypersurfaces in $(\mathbb{S}^n,g_\psi)$. In this section, we analyze $D_1\mathcal{A}(\psi,\Phi)$, which is crucial for understanding when there are one-parameter families $(\psi_t,\Phi_t)$ such that $\mathcal{H}(\psi_t,\Psi_t)\equiv 0$ for every small $t$.

We recognize this derivative as a generalized Funk transform, similar to the ones considered in \cite{ACM}, Section 9.5. In fact, a way to compute $D_1\mathcal{A}(\psi,\Phi)$ is to observe that
	\begin{equation*}
		(D_1\mathcal{A}(\psi,\Phi)\cdot f)(v):=\frac{1}{2}\int_{\Sigma_v(\Phi)}\text{tr}_{(\Sigma_v(\Phi),g_\psi)}\Big(\frac{d}{dt}\Big|_{t=0}g_{\psi+tf}\Big)dA_{g_{\psi}}.
	\end{equation*}
	This is nothing but the tensor transform considered in \cite{ACM}, Section 9.5, computed at the symmetric two-tensor 	
	\begin{equation*}
		h=2fg_\psi+e^{2\psi}(df \otimes d\psi+ d\psi\otimes df). 
	\end{equation*}
	In particular, 
	\begin{equation*}
		D_1\mathcal{A}(0,0)\cdot f: v\in \mathbb{S}^n \mapsto  (n-1)\int_{\Sigma_v}f\,dA_{can} \in \mathbb{R}.
	\end{equation*}
	is the standard Funk transform (up to the $(n-1)$ factor). The kernel and the range of this transform are well known to be the odd and even smooth functions on $\mathbb{S}^n$, respectively (\textit{cf.} Appendix of \cite{ABS}).
	
	At the end of this section, we show that the transform $D_1\mathcal{A}(\psi,\Phi)$, viewed as a map taking values in $C^{\infty}(\mathbb{RP}^n)$, has a right-inverse, as soon as $(\psi,\Phi)$ sufficiently close to $(0,0)$ in the $C^{3n+3}$ topology. This fact will be crucial for the proof of Theorem \ref{thmA}. 

Since more explicit formulae for $D_1\mathcal{A}(\psi,\Phi)\cdot f$ are required for our analysis, we compute:

\begin{lem}\label{lemD1A}
    There exists a smooth function $\mathcal{Q}_v=\mathcal{Q}_v(\psi,\Phi):\Sigma_v(\Phi)\rightarrow\R$ such that $\mathcal{Q}_v=\mathcal{Q}_{-v}$ and
    \begin{equation*}
    	\frac{d}{dt}\Big|_{t=0}\mathcal{A}_v(\psi+tf,\Phi)=\int_{\Sigma_v(\Phi)}f(y)\mathcal{Q}_v(y)dA_{g_{\psi}}(y).
    \end{equation*}
    Moreover, if $(\psi,\Phi)$ is $C^{k+2}$-close to $(0,0)$, then the function $\mathcal{Q}=\mathcal{Q}(\psi,\Phi)\in C^\infty_{*,odd}(T_1\S^n)$ given by 
    \begin{equation*}
    	\mathcal{Q}(x,v)=\mathcal{Q}_v(F_v(x)),
    \end{equation*}
    is $C^k$-close to the constant function $(n-1)$.
\end{lem}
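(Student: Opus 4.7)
The plan is to compute $\frac{d}{dt}|_{t=0}\mathcal{A}_v(\psi+tf,\Phi)$ directly from Lemma~\ref{lemareafunctJ}, integrate by parts on $\Sigma_v$ to put all derivatives on the integrand $J_v$, and then change variables via $F_v$ to rewrite the resulting integral on $\Sigma_v(\Phi)$ with respect to $dA_{g_\psi}$. More precisely, since $(\psi+tf)_v=\psi_v+tf_v$ and $\nabla^v(\psi+tf)_v=\nabla^v\psi_v+t\nabla^vf_v$, the chain rule gives
\begin{equation*}
\frac{d}{dt}\Big|_{t=0}\mathcal{A}_v(\psi+tf,\Phi)=\int_{\Sigma_v}\Big[D_1J_v\cdot f_v+\langle D_3J_v,\nabla^vf_v\rangle\Big]dA_{can},
\end{equation*}
with the $D_iJ_v$ evaluated at $(\psi_v,\Phi_v,\nabla^v\psi_v,\nabla^v\Phi_v)$. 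Since $\Sigma_v$ is closed, integration by parts on $(\Sigma_v,can)$ eliminates $\nabla^vf_v$, and the integrand becomes $\bigl[D_1J_v-\mathrm{div}_{(\Sigma_v,can)}(D_3J_v)\bigr]f_v$.

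Next I would use the change of variables identity $F_v^*dA_{g_\psi}=J_v\,dA_{can}$ from Lemma~\ref{lemareafunctJ} to rewrite the last integral as one over $\Sigma_v(\Phi)=F_v(\Sigma_v)$ weighted by $dA_{g_\psi}$. This naturally produces the formula
\begin{equation*}
\mathcal{Q}_v(y):=\frac{D_1J_v-\mathrm{div}_{(\Sigma_v,can)}(D_3J_v)}{J_v}\Big|_{x=F_v^{-1}(y)},
\end{equation*}
so that $\mathcal{Q}(x,v)=\mathcal{Q}_v(F_v(x))$ is simply $(D_1J_v-\mathrm{div}(D_3J_v))/J_v$ evaluated at $(\psi_v,\Phi_v,\nabla^v\psi_v,\nabla^v\Phi_v)$ and their derivatives up to order two at $x$.

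The equality $\mathcal{Q}_v=\mathcal{Q}_{-v}$ follows from two observations: $\Phi_{-v}=-\Phi_v$ and $F_{-v}=F_v$; and inspection of the explicit formulas \eqref{eqJ1}--\eqref{eqJ3} shows that $J_v(p,q,u,w)$, $D_1J_v(p,q,u,w)$ and $D_3J_v(p,q,u,w)$ are invariant under the joint sign-flip $(q,w)\mapsto(-q,-w)$, because $\cos(q)$ enters only through even powers and the remaining terms are quadratic in $(u,w)$. Evaluating at $(\psi,\Phi)=(0,0)$ gives $J_v=1$, $D_1J_v=n-1$ (by \eqref{eqJ1}), and $D_3J_v=0$ (by \eqref{eqJ3}, since $u=w=0$); hence $\mathcal{Q}(x,v)\equiv n-1$ at the origin. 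Finally, since $\mathcal{Q}(x,v)$ is built algebraically from smooth functions of $(\psi_v,\Phi_v,\nabla^v\psi_v,\nabla^v\Phi_v)$ together with their first derivatives (needed for the divergence), continuous dependence on $(\psi,\Phi)$ in the $C^{k+2}$ topology produces $C^k$ closeness of $\mathcal{Q}$ to the constant $n-1$.

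The only step that requires genuine care is the symmetry $\mathcal{Q}_v=\mathcal{Q}_{-v}$, because the construction of $\mathcal{Q}_v$ is apparently tied to the choice of representative $v\in[v]\in\RP^n$: one must verify that every ingredient (the frame-dependent gradients, the formulas for $D_iJ_v$, and the divergence operator on $\Sigma_v=\Sigma_{-v}$) is invariant under $v\mapsto-v$. Everything else is a bookkeeping of the chain rule, Stokes' theorem on a closed manifold, and the change-of-variables formula already established.
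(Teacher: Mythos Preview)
Your proposal is correct and follows essentially the same route as the paper's proof: differentiate the area formula of Lemma~\ref{lemareafunctJ} in the first slot, integrate by parts on the closed manifold $(\Sigma_v,can)$, and change variables via $F_v^*dA_{g_\psi}=J_v\,dA_{can}$ to obtain $\mathcal{Q}_v=(D_1J_v-\mathrm{div}_{\Sigma_v}D_3J_v)/J_v\circ F_v^{-1}$. The paper carries one extra step of algebra, writing $D_1J_v=(n-1)J_v$ and $D_3J_v=J_vX_v$ for an explicit tangent vector field $X_v$, so that $\mathcal{Q}(x,v)=(n-1)-B_v(x)-\mathrm{div}_{\Sigma_v}X_v(x)$; this makes the second-order dependence on $(\psi,\Phi)$ and the value $\mathcal{Q}(0,0)=n-1$ manifest, but your compressed version already contains the same information, and your parity argument for $\mathcal{Q}_v=\mathcal{Q}_{-v}$ is in fact more detailed than the paper's.
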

\begin{proof}
    We have
    \begin{align*}
    \frac{d}{dt}\Big|_{t=0}\mathcal{A}_v(\psi+tf,\Phi) & =\int_{\Sigma_v}\big(D_1J_v(\psi_v,\Phi_v,\nabla^v\psi_v,\nabla^v\Phi_v)f_v \\
     &\quad +\inner{D_3J_{v}(\psi_v,\Phi_v,\nabla^v\psi_v,\nabla^v\Phi_v)}{\nabla^vf_v}\big)dA_{can} \\
     &=\int_{\Sigma_v}\Big(D_1J_v(\psi_v,\Phi_v,\nabla^v\psi_v,\nabla^v\Phi_v)\\
     &\quad -\text{div}_{\Sigma_v}\big(D_3J_{v}(\psi_v,\Phi_v,\nabla^v\psi_v,\nabla^v\Phi_v)\big)\Big)f_v dA_{can}.
    \end{align*}
    To simplify the notation, let $J_v:\Sigma_v\rightarrow\R$ be the function given by $J_v=J_v(\psi_v,\Phi_v,\nabla^v\psi_v,\nabla^v\Phi_v)$. Using the explicit formula of $J_v$ deduced in Lemma \ref{lemareafunctJ} and its derivatives computed in Lemma \ref{lemeulerlagrangeoperator}, we obtain 
    \begin{equation}\label{eqD1J}
    	D_1J_v(\psi_v,\Phi_v,\nabla^v\psi_v,\nabla^v\Phi_v)=(n-1)J_v,
    \end{equation}
    and
    \begin{equation}\label{eqD3J}
    	D_3J_v(\psi_v,\Phi_v,\nabla^v\psi_v,\nabla^v\Phi_v)=J_vX_v,
    \end{equation}
    where $X_v$ is the vector field tangent to $\Sigma_v$ given by 
    \begin{equation*}
    	X_v:=\frac{(\cos(\Phi_v)^2+|\nabla^v\Phi_v|^2)\nabla^v\psi_v-\inner{\nabla^v\Phi_v}{\nabla^v\psi_v}\nabla^v\Phi_v}{|\nabla^v\Phi_v\wedge\nabla^v\psi_v|^2+\cos(\Phi_v)^2(\cos(\Phi_v)^2+|\nabla^v\Phi_v|^2+|\nabla^v\psi_v|^2)}.
    \end{equation*}
    Hence, 
    \begin{equation}\label{eqdivJ3}
    	\text{div}_{\Sigma_v}\big(D_3J(\psi_v,\Phi_v,\nabla^v\psi_v,\nabla^v\Phi_v)\big)=dJ_v(X_v)+J_v\text{div}_{\Sigma_v}X_v.
   	\end{equation}
   	It follows from \eqref{eqD3J} that $dJ_v(X_v)=B_vJ_v$, where
    \begin{equation}\label{eqBv}
        B_v = (n-1)d\psi_v(X_v)+(n-3)\tan(\Phi_v)d\Phi_v(X_v) +d\xi(X_v),
    \end{equation}
    and 
    $$\xi=\frac{1}{2}\log\big(|\nabla^v\Phi_v\wedge\nabla^v\psi_v|^2
    +\cos(\Phi_v)^2(\cos(\Psi_v)^2 +|\nabla^v\Phi_v|^2+|\nabla^v\psi_v|^2)\big).$$
The function $B_v$ depends on $\psi$ and $\Phi$ up to their second derivatives. Combining \eqref{eqD1J}, \eqref{eqdivJ3} and \eqref{eqBv}, 
\begin{equation}\label{eqD1Abefore}
    \frac{d}{dt}\Big|_{t=0}\mathcal{A}_v(\psi+tf,\Phi) 
    =\int_{\Sigma_v}\big(n-1-B_v-\text{div}_{\Sigma_v}(X_v)\big)f_vJ_vdA_{can}.
\end{equation}
Let $\mathcal{Q}_v:\Sigma_v(\Phi)\rightarrow\R$ be defined by
\begin{equation*}
	\mathcal{Q}_v(y):=(n-1)-B_v\circ F_v^{-1}(y)-\text{div}_{\Sigma_v}(X_v)\circ F_v^{-1}(y).
\end{equation*}
Since $F^*_vdA_{g_\psi}=J_vdA_{can}$ (see Lemma \ref{lemareafunctJ}), by changing variables in \eqref{eqD1Abefore} ($y=F_v(x)$, $\Sigma_v(\Phi)=F_v(\Sigma_v)$), the first part of the Lemma is proven.

Finally, observe that $\mathcal{Q}=\mathcal{Q}(\psi,\Phi):T_1\S^n\rightarrow\R$ is given by
$$\mathcal{Q}(x,v)=\mathcal{Q}_v(F_v(x))=(n-1)-B_v(x)-\text{div}_{\Sigma_v}(X_v)(x),$$
which depends on $\psi$ and $\Phi$ up to derivatives of second order, and is even in the variable $v$. Since $\mathcal{Q}(0,0)\equiv n-1$, the last assertion follows.
\end{proof}
	
	Given $(\psi,\Phi)\in C^{\infty}(\S^n)\times C^{\infty}_{*,odd}(T_1\S^n)$ sufficiently close to $(0,0)$ in the smooth topology, it is convenient to consider the area functional $\mathcal{A}(\psi,\Phi)$ as a map on $C^{\infty}(\mathbb{RP}^n)$. We then define the \textit{generalized Funk transform} $\mathcal{F}(\psi,\Phi):C^\infty(\S^n)\rightarrow C^\infty(\RP^n)$ by setting
	\begin{equation*}
		\big(\mathcal{F}(\psi,\Phi)(f)\big)(\sigma):=(D_1\mathcal{A}(\psi,\Phi)\cdot f)(\sigma),
	\end{equation*}
	for every $\sigma=[v]\in \mathbb{RP}^n$. Lemma \ref{lemD1A} shows that
	\begin{equation*}
    \big(\mathcal{F}(\psi,\Phi)(f)\big)(\sigma)=\int_{\Sigma_\sigma(\Phi)}f(y)\mathcal{Q}_v(y)dA_{g_{\psi}}(y).
	\end{equation*}
	Another useful expression is 
\begin{equation*}
	\big(\mathcal{F}(\psi,\Phi)(f)\big)(\sigma) = \int_{\Sigma_v(\Phi)}f(y) \mathcal{Q}_v(y)T_v(y)dA_{can}(y),
\end{equation*}
for some $T_v=T_{-v}\in C^{\infty}(\Sigma_v)$. Notice that we may view both $\mathcal{Q}$ and $T$ as smooth functions on the incidence set $[F(\Phi)]$.

Similarly to what was done in Lemma \ref{lemareafunctJ}, it is straightforward to compute $T_v$:
\begin{lem}\label{lemmrelationvolumeforms}
    The volume forms of $\Sigma_v(\Phi)$ induced by canonical round metric $g_0$ and $g_\psi$ at $y=F_v(x)\in \Sigma_v(\Phi)$ are related by
    $$dA_{g_{\psi}}=T_v(\psi_v(x),\Phi_v(x),\nabla^v\psi_v(x),\nabla^v\Phi_v(x))dA_{can},$$
    where $T_v$ is a smooth function defined on the Whitney sum $\R^2\oplus T\Sigma_v\oplus T\Sigma_v$ and given by
    $$T_v(p,q,u,w)\hspace{-0.5mm}=\hspace{-0.5mm}e^{(n-1)p}\cos(q)^{-1}\hspace{-0.5mm}\bigg(\frac{|u\wedge w|^2+\cos(q)^2(\cos(q)^2+|u|^2+|w|^2)}{|w|^2+\cos(q)^2}\hspace{-0.5mm}\bigg)^{\frac{1}{2}}.$$
    Moreover, if $(\psi,\Phi)$ is $C^{k+1}$-close to $(0,0)$ then $T:T_1\S^n\rightarrow\R$ given by $T(x,v)=T_v(F_v(x))$ is $C^k$-close to the constant function $1$.
\end{lem}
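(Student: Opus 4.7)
The plan is to reduce the computation directly to Lemma \ref{lemareafunctJ}, by observing that the canonical volume form on $\Sigma_v(\Phi)$ is exactly what that lemma computes when the conformal factor $\psi$ is replaced by $0$. Since the embedding $F_v^\Phi$ does not depend on $\psi$, applying Lemma \ref{lemareafunctJ} to the pair $(\psi,\Phi)$ gives
$F_v^* dA_{g_\psi} = J_v(\psi_v,\Phi_v,\nabla^v\psi_v,\nabla^v\Phi_v)\,dA_{can,\Sigma_v}$,
while applying it to $(0,\Phi)$ in place of $(\psi,\Phi)$ gives
$F_v^*(dA_{can}|_{\Sigma_v(\Phi)}) = J_v(0,\Phi_v,0,\nabla^v\Phi_v)\,dA_{can,\Sigma_v}$,
where $dA_{can,\Sigma_v}$ denotes the intrinsic canonical form of $\Sigma_v$. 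This substitution is legitimate because the derivation of $J_v$ in Lemma \ref{lemareafunctJ} treats $\psi$ and $\Phi$ as independent smooth data.

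Evaluating the explicit formula for $J_v$ at $p = 0$ and $u = 0$ yields $J_v(0,q,0,w) = \cos(q)^{n-2}\sqrt{\cos(q)^2 + |w|^2}$. Forming the quotient $T_v = J_v(\psi_v,\Phi_v,\nabla^v\psi_v,\nabla^v\Phi_v)/J_v(0,\Phi_v,0,\nabla^v\Phi_v)$ and simplifying produces precisely the closed-form expression claimed in the statement. The symmetry $T_v = T_{-v}$ is immediate from the formula, since it depends on $q$ only via $\cos(q)$ and on $w$ only via $|w|$, while $\Phi$ is odd in $v$ and $\Sigma_v = \Sigma_{-v}$.

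For the $C^k$-closeness assertion, $T_v(p,q,u,w)$ is smooth in a neighborhood of $(0,0,0,0)$ (where $\cos(q) > 0$ guarantees the expression is well-defined), with $T_v(0,0,0,0) = 1$. Because $T(x,v)$ is obtained by substituting first-order jets of $(\psi,\Phi)$ into this smooth expression, the map $(\psi,\Phi)\mapsto T$ is continuous from the $C^{k+1}$ topology into the $C^k$ topology, so $T$ is $C^k$-close to the constant function $1$ whenever $(\psi,\Phi)$ is $C^{k+1}$-close to $(0,0)$. There is no substantive obstacle: the lemma is essentially a ratio of two instances of Lemma \ref{lemareafunctJ}, and the only conceptual step is to recognize that applying that lemma with $\psi \equiv 0$ already computes the canonical volume form on $\Sigma_v(\Phi)$.
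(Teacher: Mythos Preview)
Your proof is correct and is essentially the approach the paper has in mind; the paper merely asserts that the computation is ``similarly to what was done in Lemma~\ref{lemareafunctJ}, straightforward'', and your observation that $T_v$ is the ratio $J_v(\psi_v,\Phi_v,\nabla^v\psi_v,\nabla^v\Phi_v)/J_v(0,\Phi_v,0,\nabla^v\Phi_v)$ is precisely the clean way to carry this out without repeating the frame computation.
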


	In order to apply the Nash-Moser implicit function theorem, it will be crucial to obtain a right-inverse of $\mathcal{F}(\psi,\Phi)$, when $(\psi,\Phi)$ is sufficiently close to $(0,0)$ in the smooth topology, that depends on $(\psi,\Phi)$ in a controlled way. Recall that right-inverses of linear maps, when they exist, are not necessarily unique. However, there is a canonical choice of a right-inverse $R$ for a linear map $A:\mathbb{R}^k\rightarrow \mathbb{R}^m$ such that $A\circ A^*:\mathbb{R}^m\rightarrow \mathbb{R}^m$ is invertible, where $A^*:\mathbb{R}^m\rightarrow \mathbb{R}^k$ is the adjoint map. Namely, the map $R=A^*\circ (A\circ A^*)^{-1}: \mathbb{R}^m\rightarrow \mathbb{R}^k$. This suggests the study of the dual transform of $\mathcal{F}(\psi,\Phi)$, that is, the map 
$\mathcal{F}^{*}(\psi,\Phi):C^\infty(\RP^n)\rightarrow C^\infty(\S^n)$
defined so that
\begin{equation*}
	\int_{\RP^n}\big(\mathcal{F}(\psi,\Phi)(f)\big)(\sigma)g(\sigma)dA_{can}(\sigma)=\int_{\S^n}f(y)\big(\mathcal{F}^*(\psi,\Phi)g\big)(y)dA_{can}(y),
\end{equation*}
holds for every $f\in C^\infty(\S^n)$ and $g\in C^\infty(\RP^n)$. 

Using the co-area formula for Riemannian submersions in an analogous way as done in \cite{ACM}, Section 7.3, (and considering the maps $\mathcal{Q}$ and $T$ from Lemmas \ref{lemD1A} and \ref{lemmrelationvolumeforms} as smooth maps of the incidence set $[F(\Phi)]$), we obtain
\begin{multline*}
    \int_{\RP^n}\big(\mathcal{F}(\psi,\Phi)(f)\big)(\sigma)g(\sigma)dA_{can}(\sigma) = \\
    = \int_{\RP^n}\Big(\int_{\Sigma_\sigma(\Phi)}\mathcal{Q}(y,\sigma)f(y)T(y,\sigma)dA_{can}(y)\Big)g(\sigma)dA_{can}(\sigma) = \\
    =\int_{\S^n}f(y)\Big(\int_{\Sigma_y^{*}(\Phi)}g(\sigma)\mathcal{Q}(y,\sigma)T(y,\sigma)U(\Phi)(y,\sigma)dA_{can}(\sigma)\Big)dA_{can}(y),
\end{multline*}
where $U(\Phi)$ is a certain smooth function, defined on the incidence set $[F(\Phi)]$, whose explicit formula was computed in Proposition 7.3 of \cite{ACM}. 

Thus, we define the \textit{dual generalized Funk transform} of $g\in C^{\infty}(\mathbb{RP}^n)$ as the function $\mathcal{F}^*(\psi,\Phi)g\in C^{\infty}(\S^n)$ given by
\begin{equation*}\label{eqF*}
	\big(\mathcal{F}^*(\psi,\Phi)g\big)(y):=\int_{\Sigma_y^{*}(\Phi)}g(\sigma)\mathcal{Q}(y,\sigma)T(y,\sigma)U(\Phi)(y,\sigma)dA_{can}(\sigma)
\end{equation*}
for every $y\in \mathbb{S}^n$.  \\

Let $\Delta\subseteq\RP^n\times\RP^n$ be the diagonal. For  every $[u]$, $[v]\in\RP^n\times\RP^n\setminus\Delta$, consider 
\begin{equation}\label{eqkernel}
	K(\psi,\Phi)([u],[v]):=\int_{\Sigma_u(\Phi)\cap\Sigma_v(\Phi)}\frac{\mathcal{Q}(y,u)T(y,u)\mathcal{Q}(y,v)T(y,v)}{\sqrt{1-\inner{N(\Phi)(y,u)}{N(\Phi)(y,v)}^2}}dA_{can}(y).
\end{equation}
where $N(\Phi):[F(\Phi)]\rightarrow\S^n$ associates to $(y,v)$ the unit normal vector (in the canonical metric) of $\Sigma_v(\Phi)$ in $(\S^n,can)$ at the point $y$. Notice that, in fact, the above formula is well defined because it does not change under $u\mapsto -u$ or $v\mapsto -v$.

Following the deduction in \cite{ACM}, Section 7.4, it is possible to check that
\begin{align*}
	\big(L(\psi,\Phi)(f)\big)(\sigma)& := \big(\mathcal{F}(\psi,\Phi)\circ\mathcal{F}^*(\psi,\Phi)(f)\big)(\sigma)\\
	& =\int_{\RP^n}f(\tau)K(\psi,\Phi)(\sigma,\tau)dA_{can}(\tau).
\end{align*}

The kernel $K(\psi,\Phi)$ has the same structural properties as the kernel used in \cite{ACM} (see Proposition 7.4). Therefore, Proposition 6.1 of \cite{ACM} can be applied. It follows that $L(\psi,\Phi)$ is a pseudo-differential operator of order $1-n$. Moreover, since $L(0,0)$ is elliptic and invertible (see Lemma A.1 in \cite{ACM}), $L(\psi,\Phi)$ is also elliptic and invertible for every $(\psi,\Phi)$ sufficiently small in the $C^{3n+3}$ topology.

In summary, the map
\begin{equation}\label{eqR}
	\mathcal{R}(\psi,\Phi):=\mathcal{F}^*(\psi,\Phi)\circ\big(\mathcal{F}(\psi,\Phi)\circ\mathcal{F}^*(\psi,\Phi)\big)^{-1}:C^\infty(\RP^n)\rightarrow C^\infty(\S^n).
\end{equation}
is a right-inverse of $\mathcal{F}(\psi,\Phi)$, as soon as $(\psi,\Phi)$ is in a sufficiently close $C^{3n+3}$ neighborhood of $(0,0)$.

\subsection{The Nash-Moser framework} \label{subsectnashmoserframework}
While the equation $\mathcal{H}(\psi,\Phi)\equiv 0$ characterizes the pairs $(\psi,\Phi)$ such that $\Sigma_v(\Phi)$ is minimal in $(\S^n,g_\psi)$ for every $v\in\S^n$, we will give an equivalent criterion for this property to hold. The usefulness of this criterion is that it can be expressed as distinguishing the zero level set of a smooth tame map $\Lambda$ between tame Fréchet spaces that satisfy the assumptions of the Nash-Moser Implicit Function Theorem with quadratic error due to R. Hamilton \cite{HamiltonNashMoser} (see Part III, Theorem 3.3.1). This criterion was key for the deformation theorems proven in \cite{ACM}, and will be central for us here as well. \\

	As a first step, and following Section 3.3 of \cite{ACM}, let us introduce a splitting of $C^\infty_{*,odd}(T_1\S^n)$ as a direct sum of two spaces naturally adapted to our problem.

	Let $\Omega^1_{even}(\S^n)$ be the set of smooth one-forms on $\S^n$ that are invariant under the antipodal map. The \textit{center map} is the map $C:C^{\infty}_{*,odd}(T_1\S^n)\rightarrow\Omega^1_{\text{even}}(\S^n)$ given by
	\begin{equation*}
		\big(C(\Phi)\big)_v(u)=\int_{\Sigma_v}\Phi_v(x)\inner{u}{x}dA_{can}(x)
	\end{equation*}
for all $u\in T_v\mathbb{S}^n$. The center map measures if the functions $\{\Phi_v\}_v$ are $L^2$-orthogonal to the linear functions on $(\Sigma_v,can)$. The kernel of $C$ is denoted by $C_{0,odd}^\infty(T_1\S^n)$.

	The center map has an explicit right-inverse $j:\Omega^1_{\text{even}}(\S^n)\rightarrow C^{\infty}_{*,odd}(T_1\S^n)$ (see \cite{ACM}, Proposition 3.5). The maps $C$ and $j$ then determine a splitting
	\begin{equation}\label{eqdecompositionC*odd}
	C^{\infty}_{*,odd}(T_1\S^n)=C_{0,odd}^\infty(T_1\S^n)\oplus j\big(\Omega^1_{\text{even}}(\S^n)\big).
	\end{equation}
	We show that $\mathcal{H}(\psi,\Phi)\equiv0$ is equivalent to two conditions: the constancy of the area of the hypersurfaces $\Sigma_v(\Phi)$ in $(\mathbb{S}^n,g_\psi)$, and the vanishing of the component of $\mathcal{H}(\psi,\Phi)$ in $C^\infty_{0,odd}(T_1\S^n)$:

\begin{prop}\label{propcriterion}
	The following assertions are equivalent:
	\begin{itemize}
		\item[$i)$] $\mathcal{H}(\psi,\Phi)\equiv 0$.
		\item[$ii)$] $\mathcal{A}(\psi,\Phi)$ is constant and $\mathcal{H}(\psi,\Phi)\equiv jC\mathcal{H}(\psi,\Phi)$.
	\end{itemize}
\end{prop}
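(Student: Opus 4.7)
\textit{Proof proposal.} The plan is to handle the two directions separately, with the reverse reduced to a single key identity relating $d\mathcal{A}$ and $C\mathcal{H}$.

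\emph{(i) $\Rightarrow$ (ii).} If $\mathcal{H}(\psi,\Phi)\equiv0$, then trivially $jC\mathcal{H}(\psi,\Phi)=j(0)=0=\mathcal{H}(\psi,\Phi)$. Moreover, by Lemma \ref{lemH=0meansmin}, each $\Sigma_v(\Phi)$ is minimal in $(\mathbb{S}^n,g_\psi)$. For any smooth curve $v(t)\subset\mathbb{S}^n$, the family $\{\Sigma_{v(t)}(\Phi)\}_t$ is a smooth deformation through minimal hypersurfaces, so the first variation of area in $(\mathbb{S}^n,g_\psi)$ gives $\frac{d}{dt}\mathcal{A}_{v(t)}(\psi,\Phi)\equiv 0$. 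Connectedness of $\mathbb{RP}^n$ then yields that $\mathcal{A}(\psi,\Phi)$ is constant.

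\emph{(ii) $\Rightarrow$ (i).} I would establish the identity
\begin{equation*}
d\mathcal{A}_v(u) = -\big(C\mathcal{H}(\psi,\Phi)\big)_v(u), \qquad v \in \mathbb{S}^n,\ u \in T_v\mathbb{S}^n,
\end{equation*}
possibly up to a non-vanishing multiplicative factor (which does not affect the conclusion). Granting this, constancy of $\mathcal{A}$ on $\mathbb{RP}^n$ gives $d\mathcal{A}\equiv 0$, hence $C\mathcal{H}(\psi,\Phi)=0$, and therefore $\mathcal{H}(\psi,\Phi)=jC\mathcal{H}(\psi,\Phi)=0$ from the remaining hypothesis of (ii).

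To prove the identity, I would fix $u\in T_v\mathbb{S}^n$ and pick a smooth curve $v(t)\subset\mathbb{S}^n$ with $v(0)=v$, $v'(0)=u$. Applying the first variation of area in $(\mathbb{S}^n,g_\psi)$ to the deformation $\Sigma_{v(t)}(\Phi)$,
\begin{equation*}
d\mathcal{A}_v(u) = \int_{\Sigma_v(\Phi)} \hat{\mathcal{H}}_v(y)\,\langle W_u(y), N_{g_\psi}(y)\rangle_{g_\psi}\, dA_{g_\psi}(y),
\end{equation*}
where $W_u$ denotes the variation vector of the family at $t=0$. Substituting the identity $\mathcal{H}_v=(\hat{\mathcal{H}}_v\circ F_v)(c\circ F_v)J_v$ from the proof of Lemma \ref{lemH=0meansmin}, changing variables $y=F_v(x)$, and using the volume-form relation of Lemma \ref{lemmrelationvolumeforms}, the expression reduces to $\int_{\Sigma_v}\mathcal{H}_v(x)\,\rho^u(x)\,dA_{can}(x)$ for an explicit weight $\rho^u$ built from $W_u$, $N_{g_\psi}$, $c$, $J_v$, and $T_v$.

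\textbf{The main obstacle} is identifying $\rho^u(x)$ with $-\langle u,x\rangle$ (up to a positive factor). At $(\psi,\Phi)=(0,0)$ this is transparent: $F_v=\mathrm{id}$, the $can$-unit normal equals $v$, and the normal velocity of the rotation $v(t)\mapsto\Sigma_{v(t)}$ in $(\mathbb{S}^n,can)$ at $x\in\Sigma_v$ is precisely $-\langle u,x\rangle$ in the direction $v$. For general $(\psi,\Phi)$ close to $(0,0)$ one must track how the twist from $F^\Phi_{v(t)}$ and the tilt of $N_{g_\psi}$ away from $v$ modify the computation; since only the normal component of $W_u$ contributes (tangential components produce mere reparametrizations of $\Sigma_v(\Phi)$), the problem reduces to an algebraic identity that can in principle be verified using the explicit formulas in Lemmas \ref{lemareafunctJ}--\ref{lemD1A}. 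An alternative route exploits the $O(n+1)$-equivariance of the construction: differentiating the relation $\mathcal{A}_{Rv}(\psi\circ R^{-1},\Phi\circ R^{-1})=\mathcal{A}_v(\psi,\Phi)$ at $R=\mathrm{id}$ along infinitesimal ambient rotations of $\mathbb{S}^n$ should yield the identity directly, bypassing the explicit computation of $\rho^u$.
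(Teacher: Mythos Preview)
Your overall plan coincides with the paper's (which simply defers to \cite{ACM}, Proposition~3.8): the direction (i)$\Rightarrow$(ii) is exactly as you wrote, and for (ii)$\Rightarrow$(i) one relates $d\mathcal{A}$ to an integral of $\mathcal{H}_v$ against a variational weight and then uses the hypothesis $\mathcal{H}=jC\mathcal{H}$.

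There is, however, a genuine gap in the key identity you aim for. The relation $d\mathcal{A}_v(u)=-(C\mathcal{H})_v(u)$ does \emph{not} hold (even up to a nonvanishing scalar) for general $\Phi\neq 0$. Carrying out the computation you outline gives
\[
d\mathcal{A}_v(u)=\int_{\Sigma_v}\mathcal{H}_v(\psi,\Phi)(x)\,\rho^u(x)\,dA_{can}(x),
\]
where $\rho^u$ is the $n_v$-component of the variation vector of $t\mapsto\Sigma_{v(t)}(\Phi)$; an explicit calculation (parametrising $\Sigma_{v(t)}$ via the rotation in the $(v,u)$-plane) yields
\[
\rho^u(x)=-\langle u,x\rangle+\big(d\Phi_{(x,v)}(-\langle x,u\rangle v,\,u)-\tan\Phi_v\,\langle\nabla^v\Phi_v,\,u-\langle u,x\rangle x\rangle\big),
\]
and the bracketed correction does not vanish for generic $\Phi$. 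Your equivariance route does not rescue the identity either: differentiating $\mathcal{A}_{Rv}(\psi\!\circ\!R^{-1},\Phi\!\circ\!R^{-1})=\mathcal{A}_v(\psi,\Phi)$ at $R=\mathrm{Id}$ produces $d\mathcal{A}_v(u)=-D_1\mathcal{A}\cdot L_X\psi-D_2\mathcal{A}\cdot L_X\Phi$, a sum of a Funk-type term and an $\mathcal{H}$-type term, which does not collapse to $C\mathcal{H}$.

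The fix (and this is what the argument in \cite{ACM} actually uses) is that the exact identity is unnecessary. The hypothesis $\mathcal{H}=jC\mathcal{H}$ means each $\mathcal{H}_v$ is the restriction to $\Sigma_v$ of a \emph{linear} function; hence in the pairing $\int_{\Sigma_v}\mathcal{H}_v\,\rho^u$ only the $L^2$-projection of $\rho^u$ onto linear functions contributes. At $\Phi=0$ one has $\rho^u(x)=-\langle u,x\rangle$, so the map $T_v\mathbb{S}^n\ni u\mapsto(\text{linear part of }\rho^u)$ is an isomorphism onto the linear functions on $\Sigma_v$, and this persists for $\Phi$ sufficiently $C^2$-small. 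Consequently $d\mathcal{A}_v\equiv 0$ forces $\mathcal{H}_v$ to be $L^2$-orthogonal to all linear functions, hence identically zero. This is the missing step; once you insert it in place of the claimed identity, your proof is complete and matches the paper's.
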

\begin{proof}
	The same as the proof of Proposition 3.8 in \cite{ACM}, with obvious notational changes.
\end{proof}

Consider the spaces
\begin{equation}\label{F and H}
    F:=C^\infty(\S^n)\times C^\infty_{0,odd}(T_1\S^n)\text{ and }H:=C^\infty_0(\RP^n)\times C^\infty_{0,odd}(T_1\S^n),
\end{equation}
where $C^\infty_0(\RP^n)$ is the set of zero average smooth functions on $(\RP^n,can)$. Given a sufficiently small neighborhood of $(0,0)$ in $F$, consider the map $\Lambda=(\Lambda_1,\Lambda_2):(U\subseteq F)\rightarrow H$ be given by 
\begin{equation}\label{Lambda1 and Lambda2}
\begin{split}
	\Lambda_1(\psi,\Phi)& =\mathcal{A}(\psi,\Phi)-\frac{1}{\text{Vol}(\RP^n,can)}\int_{\RP^n}\mathcal{A}_\sigma(\psi,\Phi)dA_{can}(\sigma), \\
	\Lambda_2(\psi,\Phi) &=\mathcal{H}(\psi,\Phi)-jC(\mathcal{H}(\psi,\Phi)). 
\end{split}
\end{equation}
By Proposition \ref{propcriterion}, the zeroes of $\Lambda$ are precisely the pairs $(\psi,\Phi)\in F$ such that $\mathcal{H}(\psi,\Phi)\equiv 0$. 

	The restriction on the domain of the variable $\Phi$ in the definition of $\Lambda$ is important for the following reason. Given $(\psi,\Phi)\in C^\infty(\S^n)\times C^\infty_{*,odd}(T_1\S^n)$, consider the operator 
\begin{align*}
    \mathcal{P}(\psi,\Phi):C^\infty_{0,odd}(T_1\S^n)&\rightarrow C^\infty_{0,odd}(T_1\S^n),\\
    \phi\rightarrow \frac{d}{dt}&\Big|_{t=0}\Big(\mathcal{H}(\psi,\Phi+t\phi)-jC\mathcal{H}(\psi,\Phi+t\phi)\Big).
\end{align*}
It is straightforward to compute (using, for instance, \eqref{eqcalHh}) that
$$\big(\mathcal{P}(0,0)(\phi)\big)(x,v)=-\Delta_{\Sigma_v}\phi_v(x)-(n-1)\phi_v(x).$$
Since the kernel of the operator $-\Delta_{\Sigma_v}-(n-1)$ on $(\Sigma_v,can)$ is precisely the linear functions, we conclude that $C^\infty_{0,odd}(T_1\S^n)$ is nothing but the set of functions $\Phi\in C^\infty_{0,odd}(T_1\S^n)$ such that every $\Phi_v\in C^\infty(\S^n)$ lies in the $L^2$-orthogonal complement of the kernel of the Jacobi operator of $\Sigma_v$ inside $(\mathbb{S}^n,can)$, that is, the set of linear functions restricted to $\Sigma_v$.

	By standard elliptic theory, $\mathcal{P}(0,0)$ is invertible. Moreover, $\mathcal{P}(\psi,\Phi)$ is invertible for $(\psi,\Phi)$ sufficiently small in the $C^3$ norm (see \cite{ACM}, Proposition 3.3). We denote by 
\begin{equation}\label{eqS}
	\mathcal{S}(\psi,\Phi):C^\infty_{0,odd}(T_1\S^n)\rightarrow C^\infty_{0,odd}(T_1\S^n)
\end{equation}
the inverse of $\mathcal{P}(\psi,\Phi)$.

We are now ready to state the main theorem of this section, which has Theorem \ref{thmA} as a consequence.

\begin{thm}\label{thmGamma}
    There exists a neighborhood $W$ of $0\in \ker(D\Lambda(0,0))$ and a smooth tame map 
    $$\Gamma=(\Gamma_1,\Gamma_2):\ker(D\Lambda(0,0))\cap W\rightarrow F,$$ 
    such that $\Gamma(0)=0$, $Im(\Gamma)\subseteq\Lambda^{-1}(0)$, and $D\Gamma(0,0)v=v$ for every $v\in\ker D\Lambda(0,0)$.
\end{thm}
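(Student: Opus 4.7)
The plan is to verify the hypotheses of Hamilton's Nash–Moser Implicit Function Theorem with quadratic error (\cite{HamiltonNashMoser}, Part III), applied to the map $\Lambda=(\Lambda_1,\Lambda_2)$ defined in \eqref{Lambda1 and Lambda2}. The conclusion of that theorem is precisely the smooth tame parametrization of $\Lambda^{-1}(0)$ by $\ker D\Lambda(0,0)$ asserted by Theorem~\ref{thmGamma}: in finite dimensions one writes $F\cong\ker D\Lambda(0,0)\oplus X$ and solves $\Lambda(v+x(v))=0$ for $x(v)\in X$ with $x(0)=0$ and $Dx(0)=0$, and the tame version works the same way. The whole argument follows very closely the proof of Theorem~A of \cite{ACM}, which handles the analogous conformal deformation problem; the changes are bookkeeping due to the different role of $\psi$ (intrinsic conformal vs.\ ambient graphical deformation).

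The first hypothesis --- that $F$ and $H$ are tame Fr\'echet spaces and $\Lambda$ is a smooth tame map with $\Lambda(0,0)=0$ --- is routine. The spaces $F$ and $H$ are direct summands of spaces of smooth sections of natural vector bundles over compact manifolds, which are tame by standard results from \cite{HamiltonNashMoser}. Smoothness and tameness of $\Lambda$ follow from the explicit integral formulas of Lemmas~\ref{lemareafunctJ} and~\ref{lemeulerlagrangeoperator}, combined with tameness of pointwise nonlinear superposition, fiber integration, and of the linear operators $C$ and $j$. The vanishing $\Lambda(0,0)=0$ holds because the equators $\Sigma_v$ are all minimal in $(\S^n,can)$ with the same area.

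The central construction is that of a smooth tame family of approximate right inverses $V\Lambda(\psi,\Phi):H\to F$. Using the right inverse $\mathcal{R}(\psi,\Phi)$ of the generalized Funk transform from \eqref{eqR} and the inverse $\mathcal{S}(\psi,\Phi)$ of $\mathcal{P}(\psi,\Phi)$ from \eqref{eqS}, the natural definition is
\begin{equation*}
V\Lambda(\psi,\Phi)(g,h):=\bigl(\mathcal{R}(\psi,\Phi)g,\;\mathcal{S}(\psi,\Phi)\bigl(h-D_1\Lambda_2(\psi,\Phi)\mathcal{R}(\psi,\Phi)g\bigr)\bigr).
\end{equation*}
A direct computation using $\mathcal{F}(\psi,\Phi)\mathcal{R}(\psi,\Phi)=\mathrm{Id}$ on $C^\infty_0(\RP^n)$ and $\mathcal{P}(\psi,\Phi)\mathcal{S}(\psi,\Phi)=\mathrm{Id}$ on $C^\infty_{0,odd}(T_1\S^n)$ shows that the $\Lambda_2$-component of the residual $D\Lambda(\psi,\Phi)V\Lambda(\psi,\Phi)-\mathrm{Id}$ vanishes identically, while the $\Lambda_1$-component reduces to the cross term $D_2\Lambda_1(\psi,\Phi)\phi$, with $\phi$ the second entry of $V\Lambda(\psi,\Phi)(g,h)$. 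Tameness of $V\Lambda$ is inherited from that of $\mathcal{R}$ (via the pseudodifferential inversion cited at the end of Section~\ref{subsectfunk}, i.e.\ Proposition~6.1 of \cite{ACM}) and of $\mathcal{S}$ (via standard elliptic regularity for $\mathcal{P}(\psi,\Phi)$).

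The main obstacle is the quadratic error estimate: one must show that the residual $D_2\Lambda_1(\psi,\Phi)\phi$ factorizes as a tame bilinear form through $\Lambda(\psi,\Phi)$ itself. By Lemma~\ref{lemeulerlagrangeoperator},
\begin{equation*}
(D_2\mathcal{A}(\psi,\Phi)\phi)(v)=\int_{\Sigma_v}\mathcal{H}_v(\psi,\Phi)\phi_v\,dA_{can},
\end{equation*}
so modulo an average the residual is a pairing of $\mathcal{H}(\psi,\Phi)$ with $\phi$. Decomposing $\mathcal{H}=\Lambda_2+jC\mathcal{H}$ according to the splitting \eqref{eqdecompositionC*odd}, the $\Lambda_2$-piece manifestly yields a contribution quadratic in $\Lambda(\psi,\Phi)$. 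The delicate part is the $jC\mathcal{H}$-piece, which must be shown to be controlled by $\Lambda_1(\psi,\Phi)$: heuristically, the ``center'' component of $\mathcal{H}$ is detected precisely by variations of the area along the rotational vector fields that preserve the family of equators, and this is exactly the information encoded by $\Lambda_1$. This is the integration-by-parts identity underlying Proposition~\ref{propcriterion}, and its quantitative form --- giving the needed factorization through $\Lambda_1(\psi,\Phi)$ --- is obtained exactly as in the proof of Proposition~3.8 of \cite{ACM}. Once the quadratic factorization is established, Hamilton's theorem applies and delivers the map $\Gamma$ with the claimed properties.
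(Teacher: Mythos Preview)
Your proposal is correct and follows essentially the same approach as the paper: apply Hamilton's Nash--Moser theorem with quadratic error (equivalently, Theorem~5.1 of \cite{ACM}) to $\Lambda$, building the approximate right inverse $V$ from $\mathcal{R}(\psi,\Phi)$ and $\mathcal{S}(\psi,\Phi)$ exactly as you describe, and verifying the quadratic factorization of the residual via the identity $D_2\mathcal{A}=\int\mathcal{H}\cdot\phi$ and the splitting $\mathcal{H}=\Lambda_2+jC\mathcal{H}$. The paper's own proof is terser, simply pointing to Sections~5.2 and~8 of \cite{ACM} for the formulas for $V$ and $Q$ and for the tameness verifications; your write-up expands these structural steps faithfully, with the only mild imprecision being that the $jC\mathcal{H}$ term is controlled by the \emph{differential} $d_{\RP^n}\Lambda_1$ rather than $\Lambda_1$ pointwise --- which is of course still a tame linear expression in $\Lambda_1$ and hence suffices for the bilinear factorization.
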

\begin{proof}
    This is a direct adaptation of \cite{ACM}, Sections 5 and 8. For the reader's convenience, we highlight the main steps.

    Let $V=(V_1,V_2):(U\subseteq F)\times H\rightarrow F$ and $Q=(Q_1,Q_2):(U\subseteq F)\times H\times H\rightarrow H$ be defined by the same formulas of Section 5.2 of \cite{ACM}, but with the functions $\mathcal{A}(\psi,\Phi)$ as in Lemma \ref{lemareafunctJ}, $\mathcal{H}(\psi,\Phi)$ as in \eqref{eqformulaofH}, $\mathcal{R}(\psi,\Phi)$ as in \eqref{eqR}, and $\mathcal{S}(\psi,\Phi)$ as \eqref{eqS} instead.
    
    As shown in Section 5.2 of \cite{ACM}, $D\Lambda$ satisfies 
    \begin{equation*}
    	    D\Lambda(\psi,\Phi)\cdot V(\psi,\Phi)(b,\xi) = (b,\xi) + Q(\psi,\Phi)\cdot\{\Lambda(\psi,\Phi),(b,\xi)\}
    \end{equation*}
    for all $(\psi,\Phi)\in U$ and $(b,\xi)\in H$, where $Q: U\subseteq F\times H\times H\rightarrow H$ is a smooth tame map that is bilinear in the last two entries. Therefore, $V$ is a right inverse of $D\Lambda$ up to a quadratic error $Q$. (Notice that $V(\psi,\Phi)$ is the exact right-inverse of $D\Lambda(\psi,\Phi)$ if $\Lambda(\psi,\Phi)=0$).
    
	In order to apply Theorem 5.1 of \cite{ACM}, it remains to check that all the maps $\Lambda$, $V$ and $Q$ are smooth tame maps on tame Fr\'echet spaces. This verification involves no further technical difficulties compared to what is done in Section 8 of \cite{ACM}, and can be done following very closely that work, while book-keeping the obvious modifications arising from the different expressions for the maps involved in the construction of $\Lambda$, $V$ and $Q$ that we computed explicitly in Sections \ref{subsectarea}, \ref{subsectfunk} and \ref{subsectnashmoserframework}. We leave the details to the meticulous reader.
		
	Thus, we obtain the desired map $\Gamma$ by applying Theorem 5.1 of \cite{ACM} to the map $\Lambda$.
\end{proof}

	In order to prove Theorem \ref{thmA}, the only thing that remains to be done is to compute $\ker D\Lambda(0,0)$.
	
\begin{prop}\label{propkernel}
	The kernel of $D\Lambda(0,0)$ is the set of all pairs $(f,\phi)\in C^{\infty}(\mathbb{S}^n)\times C^\infty_{0,odd}(T_\mathbb{S}^n)$ such that
	\begin{itemize}
		\item[$i)$] $f$ is the sum of a constant function and an odd function;
		\item[$ii)$] $\phi$ is such that, for every $v\in \mathbb{S}^n$,  
		\begin{equation}\label{eqdefphi(f)}
			\Delta_{(\Sigma_v,can)} \phi_v + (n-1)\phi_v = (n-1)\langle\nabla f,v\rangle \quad \text{on} \quad \Sigma_v.
		\end{equation}
	\end{itemize}
	Moreover, every $f$ as in $i)$ uniquely determines $\phi=\phi(f)$ as in $ii)$.
\end{prop}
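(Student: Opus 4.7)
The plan is to compute the differential $D\Lambda(0,0)=(D\Lambda_1(0,0),D\Lambda_2(0,0))$ componentwise and match the resulting vanishing conditions with $i)$ and $ii)$. First, I would analyze $D\Lambda_1(0,0)(f,\phi)$. Since $\mathcal{H}(0,0)\equiv 0$ (the equators are totally geodesic in $(\mathbb{S}^n,can)$), Lemma \ref{lemeulerlagrangeoperator} gives $D_2\mathcal{A}(0,0)\cdot\phi=0$, while Lemma \ref{lemD1A} yields $D_1\mathcal{A}(0,0)\cdot f=(n-1)\int_{\Sigma_v}f\,dA_{can}$, the classical Funk transform of $f$ (up to the factor $n-1$). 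Hence $D\Lambda_1(0,0)(f,\phi)=0$ is equivalent to the Funk transform of $f$ being constant on $\mathbb{RP}^n$. Decomposing $f=f_{odd}+f_{even}$ and recalling that the Funk transform vanishes on odd functions, is injective on even ones, and sends constants to constants, this is equivalent to $f$ being the sum of a constant and an odd function, which is exactly condition $i)$.

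Second, I would compute $D\Lambda_2(0,0)(f,\phi)=(I-jC)D\mathcal{H}(0,0)(f,\phi)$. The piece $D_2\mathcal{H}(0,0)\cdot\phi=-\Delta_{\Sigma_v}\phi_v-(n-1)\phi_v$ was already identified in Section \ref{subsectnashmoserframework}. For $D_1\mathcal{H}(0,0)\cdot f$, I would directly differentiate \eqref{eqformulaofH} in the $\psi$-direction at $(0,0)$: with $\Phi=0$ one has $n_v=v$, so $\partial_n\psi_v=\langle\nabla\psi,v\rangle|_{\Sigma_v}$ and $\nabla^v\Phi_v=0$; by \eqref{eqJ2} and \eqref{eqJ4} the terms $D_2J_v$ and $D_4J_v$ vanish identically (because of the factors $\tan(q)$ and $w$), while \eqref{eqJ1} and \eqref{eqJ3} give $D_1J_v=(n-1)J_v$ and $D_3J_v\propto\nabla^v\psi_v$. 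The divergence term $\mathrm{div}_{\Sigma_v}(D_3J_v)\partial_n\psi_v$ is then quadratic in $\psi$ at $\psi=0$ (both factors vanish there), so to first order only the term $(n-1)J_v\partial_n\psi_v$ contributes, yielding $D_1\mathcal{H}(0,0)\cdot f=(n-1)\langle\nabla f,v\rangle|_{\Sigma_v}$.

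Third, set $G(f)(x,v):=\langle\nabla f(x),v\rangle$, so that $D\mathcal{H}(0,0)(f,\phi)=(n-1)G(f)+\mathcal{P}(0,0)\phi$. Assuming $i)$, the crucial observation is that $G(f)\in C^\infty_{0,odd}(T_1\mathbb{S}^n)$: the constant part contributes nothing, while for $f$ odd the map $x\mapsto\nabla f(x)$ is invariant under the antipodal map (regarded as an $\mathbb{R}^{n+1}$-valued map on $\mathbb{S}^n$), so $G(f)(\cdot,v)$ is even on $\Sigma_v$ and therefore $L^2$-orthogonal to the odd linear functions $x\mapsto\langle u,x\rangle$, placing $G(f)$ in $\ker(C)$. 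Combined with $\mathcal{P}(0,0)\phi\in C^\infty_{0,odd}$ for $\phi\in C^\infty_{0,odd}$, this makes $(I-jC)$ act as the identity on $D\mathcal{H}(0,0)(f,\phi)$, so the equation $D\Lambda_2(0,0)(f,\phi)=0$ reduces to $-\Delta_{\Sigma_v}\phi_v-(n-1)\phi_v+(n-1)\langle\nabla f,v\rangle=0$, which is exactly \eqref{eqdefphi(f)}. Existence and uniqueness of $\phi\in C^\infty_{0,odd}(T_1\mathbb{S}^n)$ satisfying \eqref{eqdefphi(f)} for each $f$ as in $i)$ then follow from the invertibility of $\mathcal{P}(0,0)$ on $C^\infty_{0,odd}(T_1\mathbb{S}^n)$ recorded in Section \ref{subsectnashmoserframework}. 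The main obstacle is the delicate bookkeeping in the direct computation of $D_1\mathcal{H}(0,0)\cdot f$ from \eqref{eqformulaofH} via \eqref{eqJ1}--\eqref{eqJ4}; everything else reduces to the parity argument and standard elliptic theory.
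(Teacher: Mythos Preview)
Your proposal is correct and follows essentially the same approach as the paper's proof: both compute $D\Lambda_1(0,0)$ via the Funk transform to obtain condition $i)$, compute $D_1\mathcal{H}(0,0)\cdot f=(n-1)\langle\nabla f,v\rangle$ from \eqref{eqformulaofH} and \eqref{eqJ1}--\eqref{eqJ4}, observe that this function has vanishing center (you are in fact more explicit than the paper in noting that this requires $i)$), and then reduce $D\Lambda_2(0,0)(f,\phi)=0$ to \eqref{eqdefphi(f)} with uniqueness from the invertibility of $\mathcal{P}(0,0)$. The only cosmetic point is that you occasionally write $\mathcal{P}(0,0)\phi$ where $D_2\mathcal{H}(0,0)\phi$ is meant, but since $-\Delta_{\Sigma_v}-(n-1)$ maps into the orthogonal complement of the linear functions these coincide on $C^\infty_{0,odd}(T_1\mathbb{S}^n)$, so no harm is done.
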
	

\begin{proof}
	Since the family of equators is a family of minimal hypersurfaces in $(\mathbb{S}^n,can)$, it is clear that $D_2\Lambda_1(0,0)\phi=0$ for all $\phi\in C^{\infty}_{0,odd}(T_1\mathbb{S}^n)$. On the other hand $D_1\Lambda_1(0,0)\cdot f =0$ if and only if
	\begin{equation*}
		\mathcal{F}(0,0)\cdot f = \frac{1}{vol(\mathbb{RP}^n,can)}\int_{\mathbb{RP}^n} (	\mathcal{F}(0,0)\cdot f)(\sigma)dA_{can}(\sigma).
	\end{equation*}
	By Lemma A.1 in \cite{ACM}, this is equivalent to $f=\lambda+f_0$ for some $\lambda\in \mathbb{R}$ and $f_0\in C^{\infty}_{odd}(\mathbb{S}^n)$.
	
	As for $\Lambda_2$, it follows from Lemma \ref{lemeulerlagrangeoperator} that
\begin{align*}
    \mathcal{H}_v(tf,0)&= D_1J_v(tf_v,0,t\nabla^vf_v,0)t\partial_nf_v+D_2J_v(tf_v,0,t\nabla^vf_v,0)\\
    &\quad -div_{(\Sigma_v,can)}\big(D_3J_v(tf_v,0,t\nabla^vf_v,0)\big)t\partial_nf_v \\
    &\quad -div_{(\Sigma_v,\text{can})}\big(D_4J_v(tf_v,0,t\nabla^vf_v,0)\big),
\end{align*}
so that a straightforward computation from the explicit formulas \eqref{eqJ1}, \eqref{eqJ2}, \eqref{eqJ3}, and \eqref{eqJ4} of each term $J_i$ above gives
\begin{equation*}
	(D_1\mathcal{H}(0,0)\cdot f)(x,v) =(n-1)\partial_n f_v(x)=(n-1)\inner{\nabla f(x)}{v}
\end{equation*}
for every $v\in \mathbb{S}^n$. Notice that $\inner{\nabla f(x)}{v}\in \mathbb{R}$ is an even function of $x\in \Sigma_v$, for every $v\in \mathbb{S}^n$. In particular, $C(D_1\mathcal{H}(0,0)\cdot f)\equiv 0$. 

Finally, since $D_2\mathcal{H}(0,0)=\mathcal{P}(0,0)$, we conclude that $D\Lambda_2(0,0)\cdot (f,\phi)=0$ if and only if
\begin{equation}\label{eqphif}
	\Delta_{(\Sigma_v,can)}\phi_v+(n-1)\phi_v = (n-1)\inner{\nabla f}{v}  \quad \text{on} \quad \Sigma_v,
\end{equation}
for all $v\in \mathbb{S}^n$. 
	
	Since, as in Proposition 9.1 of \cite{ACM}, for every $f\in C_{odd}^\infty(\S^n)$, there exists a unique $\phi=\phi(f)\in C^{\infty}_{0,odd}(T_1\S^n)$ solving \eqref{eqphif}, we obtain
\begin{equation*}
	\ker D\Lambda(0,0)=\{(\lambda+f_0,\phi(\lambda+f_0))\,|\,\lambda\in \mathbb{R}, f_0\in C_{odd}^\infty(\S^n)\},
\end{equation*}
as claimed.
\end{proof}

We can now finish the proof of Theorem \ref{thmA}:

\begin{proof}[Proof of Theorem \ref{thmA}] By Proposition \ref{propkernel}, given $f\in C^{\infty}_{odd}(\mathbb{S}^n)$, there exists a unique $\phi=\phi(f)\in C^{\infty}_{0,odd}(T_1\mathbb{S}^n)$ so that 
\begin{equation*}
	(f,\phi(f))\in \ker D\Lambda(0,0).
\end{equation*}
Therefore, by Theorem \ref{thmGamma}, for sufficiently small $\varepsilon>0$, the map
\begin{equation}\label{equation final solution}
    t\in (-\varepsilon,\varepsilon) \mapsto \Gamma(tf,t\phi(f)) = (\psi_t,\Phi_t) \in F
\end{equation}
describes a smooth one parameter family of pairs $(\psi_t,\Phi_t)\in \Lambda^{-1}(0,0)$ such that $(\dot{\psi}_0,\dot{\Phi}_0) = D\Gamma(0,0)\cdot (f,\phi(f))=(f,\phi(f))$. 

Thus, by Proposition \ref{propcriterion}, for each $t\in (-\varepsilon,\varepsilon)$, the family $\{\iota_{\psi_t}(\Sigma_\sigma(\Phi_t))\}$, $\sigma\in \mathbb{RP}^n$, is a Zoll family of minimal hypersurfaces in the sphere $\iota_{\psi_t}(\mathbb{S}^n)\subset \mathbb{R}^{n+1}$ with the metric induced by the embedding $\iota_{\psi_t} : \mathbb{S}^n\rightarrow \mathbb{R}^{n+1}$ as in \eqref{eqstarshapped2}. Moreover, $\psi_t=tf+o(t)$ as $t$ goes to zero.
\end{proof}

\begin{remark} \label{rmkallvariations}
	The constant functions generate the trivial deformations of $\mathbb{S}^n$ by dilations. On the other hand, any one-parameter graphical deformation $\iota_{\psi_t}(\mathbb{S}^n)$ with $\iota_{\psi_t}^*can\in \mathcal{Z}$ can be rescaled at every $t$ so that every element of the Zoll family has an area in $(\mathbb{S}^n,g_{\psi_t})$ equal to $area(\mathbb{S}^{n-1},can)$. Since the rescaling factor depends smoothly on $t$, by doing so one obtains a new smooth one-parameter family $g_{\overline{\psi}_t}\in \mathcal{Z}$ such that $\overline{\psi}_t=tf+o(t)$ for some smooth odd function $f$ on $\mathbb{S}^n$, because the first-order term in this expansion, aside from being in the kernel of $D\Lambda_1(0,0)$, is also in the kernel of $D_1\mathcal{A}(0,0)=\mathcal{F}(0,0)$. Thus, by Theorem \ref{thmA}, the smooth odd functions actually cover all possible first-order expansions of non-trivial deformations in $\mathcal{Z}$ of its kind. 
\end{remark}

\section{Symmetry}\label{Section Symmetry} 

\subsection{Equivariance}\label{subsection equivariance}
	In order to verify the symmetries of constructions based on the Nash-Moser inverse function theorem with quadratic error of R. Hamilton (\cite{HamiltonNashMoser}, Part III, Theorem 3.3.1), we need to revisit several definitions and proofs in the abstract theory of tame Fréchet spaces. We follow \cite{HamiltonNashMoser}, Part II. 

    \begin{defn}
        \normalfont Given two (graded) Fréchet spaces $X$ and $Y$, we say that $X$ is a {\it tame direct summand of} $Y$ if there exist tame linear maps $L:X\rightarrow Y$ and $M:Y\rightarrow X$ such that $ML=Id_X$. In particular, $L$ is injective, $M$ is surjective, and $Y= L(X)\oplus\ker(M)$.
    \end{defn}
	
	Let $(B,||\cdot||)$ be a Banach space. We associate to $B$ the graded Fréchet space
	\begin{equation*}
		\Sigma(B):=\{f=(f_k)_{k\in\N}\,|\,f_k\in B,\,||f||_n:=\sum_ke^{nk}||f_k||<\infty,\,\forall n\in\N\}.
	\end{equation*}

    \begin{defn}
    \normalfont A Fréchet space $X$ is {\it tame} if there exists a Banach space $B$ such that $X$ is a tame direct summand of $\Sigma(B)$, that is, if there are tame linear maps $X\xrightarrow{L}\Sigma(B)\xrightarrow{M}X$ such that $ML=Id_X$.
    In this case, we call $(B,L,M)$ the {\it tame data} of $X$.
\end{defn}
 
    \begin{exam}
        $\Sigma(B)$ is itself trivially a tame Fr\'echet space with tame data $(B,Id_{\Sigma(B)},Id_{\Sigma(B)})$.
    \end{exam}
    \begin{exam}\label{examtamefactor}
        If $X$ is a tame direct summand of a tame Fréchet space $Y$, then $X$ is naturally a tame Fréchet space. 
    To be more precise, if we have the tame summands $X\xrightarrow{L}Y\xrightarrow{M}X$ and $Y\xrightarrow{L_Y}\Sigma(B)\xrightarrow{M_Y}Y$ then $X$ is a tame Fréchet space with tame data $(B,L_YL,MM_Y)$. (\textit{Cf}. \cite{HamiltonNashMoser}, Part II, Lemma 1.3.3).
    \end{exam}
    \begin{exam}\label{examtameproduct} 
   		The Cartesian product $X_1\times X_2$ of tame Fr\'echet spaces $X_i$ with tame data $(B_i,L_i,M_i)$ is a tame Fr\'echet space with tame data $(B_1\times B_2,L_1\times L_2,M_1\times M_2)$ (\textit{cf}. \cite{HamiltonNashMoser}, Part II, Lemma 1.3.4).     
    \end{exam}

	 Although the tame data is not uniquely determined by the Fr\'echet space $X$, a choice has been fixed once and for all for each tame Fr\'echet space that has been considered in \cite{HamiltonNashMoser}. The above examples made it explicit in cases that will appear repeatedly in this work. Another important example for us is the Fr\'echet space $C^\infty(\S^n)$, which we discuss in detail below.
    
    \begin{exam}\label{exam tame data C(Sn)}
        If $X$ is a compact manifold (without boundary), then $C^{\infty}(X)$ is a tame Fr\'echet space (\cite{HamiltonNashMoser}, Part II, Theorem 1.3.6). We review the proof of this fact in the specific case of the sphere $\mathbb{S}^n$, this in order to make explicit the tame data of $C^{\infty}(\mathbb{S}^n)$.
	
	According to the proof of the aforementioned theorem, $C^{\infty}(\mathbb{S}^n)$ is a tame direct summand of the space  
	\begin{equation*}
		L_1^{\infty}(\mathbb{R}^{n+1},dx,\log(1+|x|)),
	\end{equation*}
	which consists of all measurable functions $f$ on $\mathbb{R}^{n+1}$ such that the norms $||f||_n:=\int_{\R^{n+1}}|f(x)|(1+|x|)^ndx$ are finite for all non-negative integers $n$. This space is nothing but the image by the Fourier transform of functions belonging to $C^{\infty}_0(\mathbb{R}^{n+1})$, which is the set of smooth functions with all the partial derivatives converging to zero at infinity. Moreover, this space itself is a tame direct summand of $\Sigma(L_1(\R^{n+1}))$, by \cite{HamiltonNashMoser}, Part II, Lemma 1.3.5.
	
	Let us now explicitly determine the tame data $(\hspace{-.2mm}L_1(\mathbb{R}^{n+1}),\hspace{-.1mm}L,\hspace{-.1mm}M\hspace{-.2mm})$ of $C^\infty\hspace{-.5mm}(\S^n\hspace{-.3mm})\hspace{-.5mm}$. If $\chi_k$ denotes the characteristic function of the set 
    $$\{x\in \mathbb{R}^{n+1}\,|\, k\leq \log(1+|x|)<k+1\},\quad k\in\Z_{\geq0},$$ 
    we define the maps 
    $$L_1^\infty(\R^{n+1},dx,\log(1+|x|))\xrightarrow{\tilde{L}} \Sigma(L_1(\R^{n+1}))\xrightarrow{\tilde{M}}L_1^\infty(\R^{n+1},dx,\log(1+|x|)),$$
by 
\begin{equation}\label{eqLtildeMtilde}
	\tilde{L}(f)=(f\chi_k)_k \quad \text{and} \quad \tilde{M}((f_k)_k)=\sum_k f_k\chi_k.
\end{equation} 
The tame linear maps $\tilde{L}$ and $\tilde{M}$ satisfy $\tilde{M}\tilde{L}=Id$. Hence, the Fr\'echet space $L_1^\infty(\R^{n+1},dx,\log(1+|x|))$ is tame with tame data $(L_1(\mathbb{R}^{n+1}),\tilde{L},\tilde{M})$.

	Consider the Fourier transform and its inverse, 
    $$C^\infty_0(\mathbb{R}^{n+1})\xrightarrow{\mathbb{F}} L_1^\infty(\R^{n+1},dx,\log(1+|x|))\xrightarrow{\mathbb{F}^{-1}} C^\infty_0(\R^{n+1}).$$
    Since these maps are tame linear isomorphisms, $C^\infty_0(\mathbb{R}^{n+1})$ is a tame direct summand of $L_1^\infty(\R^{n+1},dx,\log(1+|x|))$. 
	
    Let $\eta$ be an $O(n+1)$-invariant bump function that is equal to $1$ in a neighborhood of $\mathbb{S}^n$ and vanishes outside the annulus with center at the origin and radii $1/2$ and $3/2$. Let $\varepsilon:C^\infty(\S^n)\rightarrow C^\infty_0(\R^{n+1})$ be the map that first extends the function as a constant along every ray from the origin, and then multiply the result by $\eta$. Finally, denote by $\rho: C^\infty_0(\R^{n+1})\rightarrow C^\infty(\S^n)$ the restriction map. Since the maps $\rho$ and $\varepsilon$ are tame linear maps such that $\rho\circ \varepsilon = Id$, $C^\infty(\S^n)$ is a tame direct summand of $C^\infty_0(\R^{n+1})$. 
    
    In conclusion (and according to Example \ref{examtamefactor}), $C^\infty(\S^n)$ is a tame Fréchet space with tame data $(L_1(\R^{n+1}),\tilde{L}\mathbb{F}\varepsilon,\rho\mathbb{F}^{-1}\tilde{M})$.
    \end{exam}

    Since we are interested in the action of $O(n+1)$ on tame Fréchet spaces like $C^\infty(\S^n)$, we introduce a notion of compatibility.
        
\begin{defn}\label{defn compatibility of two maps AX and Ay}
    \normalfont Let $X$ be a tame direct summand of the Fréchet space $Y$, with $X\xrightarrow{L}Y\xrightarrow{M}X$ and $ML=Id_X$.
    We say that tame linear maps $A_X:X\rightarrow X$ and $A_Y:Y\rightarrow Y$ are {\it compatible} if $LA_X=A_YL$ and $A_XM=MA_Y$.
\end{defn}

\begin{defn}
    \normalfont Let $X$ be a tame Fréchet space with tame data $(B,L,M)$. 
    We say that a tame linear map $A_X:X\rightarrow X$ is {\it compatible with the tame data} if there exists a bounded linear map $A_B:B\rightarrow B$ such that $A_X$ is compatible with the induced tame map $A_B:\Sigma(B)\rightarrow\Sigma(B)$ defined naturally by $A_B((f_n)_n)=(A_Bf_n)_n$.
\end{defn}
\begin{exam}
    In any tame Fréchet space $X$, the identity and zero maps $Id,0:X\rightarrow X$ are compatible with the tame data. Indeed, simply take the identity and zero maps $ Id,0:B\rightarrow B$.
\end{exam}
\begin{exam}\label{Ay compatible -> Ax compatible}
    Let $X$ be a tame direct summand of the Fr\'echet space $Y$, and suppose that $A_X:X\rightarrow X$ and $A_Y:Y\rightarrow Y$ are compatible tame maps.
    If $Y$ is tame and $A_Y$ is compatible with the tame data of $Y$, then $A_X$ is compatible with the induced tame date of $X$ (see Example \ref{examtamefactor}).
\end{exam}
\begin{exam}\label{exam product of compatible maps}
    The product of two compatible maps is compatible with respect to the tame structure of the product (see Example \ref{examtameproduct}).
\end{exam}
	
	\begin{exam}\label{exam action of o(n+1) with C(Sn)}
        Any map $A\in O(n+1)$ induces a linear map $$T_A:C^\infty(\S^n)\rightarrow C^\infty(\S^n),$$
        simply by pre-composition with $A$.
        This map is clearly tame since it preserves all the $C^k$ norms.
        Let us verify that this map is compatible with the tame data of $C^\infty(\S^n)$ (see Example \ref{exam tame data C(Sn)}). The map $A$ determines a bounded linear map on $L_1(\mathbb{R}^{n+1})$, also by pre-composition. 
        We claim that this map satisfies the condition for $T_A$ to be compatible with the tame data of $C^{\infty}(\mathbb{S}^n)$. Indeed, this can be checked in a straightforward fashion, due to the equivariance property, under the action of $O(n+1)$, of the Fourier transform, of our choice of extension operator, and of the operators $\tilde{L}$ and $\tilde{M}$ in  \eqref{eqLtildeMtilde}.
\end{exam}

	The case of smooth functions on $T_1\mathbb{S}^n\subset \mathbb{R}^{n+1}\times \mathbb{R}^{n+1}$, which is also important for our applications, is treated similarly.

\begin{exam}\label{exam tame data C(T1Sn)}
	The tame Fr\'echet space $C^\infty(T_1\S^n)$ is such that the natural action of any $A\in O(n+1)$ (namely, pre-composition with $A\times A$) is compatible with its tame data.
	
	In fact, consider the natural inclusion $T_1\S^n\subseteq\R^{n+1}\times\R^{n+1}=\R^{2n+2}$. As before, we have tame linear maps
        $$L_1^\infty(\R^{2n+2},dx,\log(1+|x|))\xrightarrow{\hat{L}}\Sigma(L_1(\R^{2n+2}))\xrightarrow{\hat{M}}L_1^\infty(\R^{2n+2},dx,\log(1+|x|)),$$
        with $\hat{M}\hat{L}=Id$ given by the same formula \eqref{eqLtildeMtilde}. Also, we have the Fourier transform $\mathbb{F}$ and its inverse $\mathbb{F}^{-1}$ relating functions in $C^{\infty}_0(\mathbb{R}^{2n+2})$ and in $L_1^\infty(\R^{2n+2},dx,\log(1+|x|)$. Finally, we define $\hat{\rho}:C^\infty_0(\R^{2n+2})\rightarrow C^\infty(T_1\S^n)$ the restriction map, and  $\hat{\varepsilon}:C^\infty(T_1\S^n)\rightarrow C^\infty_0(\R^{2n+2})$ the map that extends a function as a constant along the normal lines of $T_1\S^n\subseteq\R^{2n+2}$, and then multiplies the resulting function by the bump function $\hat{\rho}$ given by $\hat{\rho}(x,y)=\rho(x)\rho(y)$ for every $x,y\in\R^{n+1}$, for $\rho$ as in Example \ref{exam tame data C(Sn)} (the support of $\rho$ must be taken smaller in order that the tubular neighborhood theorem applies).
    
    It is straightforward to check that $C^\infty(T_1\S^n)$ is a tame Fréchet space with tame date $(L_1(\R^{2n+2}),\hat{L}\mathbb{F}\hat{\varepsilon},\hat{\rho}\mathbb{F}^{-1}\hat{M})$.
    
    Observe that every $A\in O(n+1)$ determines a map
        $$T_A:C^\infty(T_1\S^n)\rightarrow C^\infty(T_1\S^n),$$
        $$(T_Af)(x,y):=f(Ax,Ay).$$
    As in the case of $\S^n$, the natural map $T_A : L_1(\R^{2n+2})\rightarrow L_1(\R^{2n+2})$, induced by pre-composition with $(A\times A)(x,y)=(Ax,Ay)$, allows us to verify that $T_A$ is compatible with the tame data of $C^\infty(T_1\S^n)$.
    \end{exam}
	
	The proof of Theorem 5.1 in \cite{ACM}, on which our main result Theorem \ref{thmA} relies, is based on the concept of {\it near-projection} introduced in \cite{HamiltonDeformationComplex}. This is a smooth tame map $G:U\subseteq X\rightarrow X$, defined on an open subset $U\subseteq X$ of a tame Fréchet space $X$, that satisfies
\begin{equation}\label{def near-projection}
    (DG)_x(G(x)-x)+Q(x)\{G(x)-x,G(x)-x\}=0 \quad\forall x\in U,
\end{equation}
for some smooth tame map $Q=Q(\cdot)\{\cdot,\cdot\}:(U\subset X)\times X\times X\rightarrow X$ that is linear in the last two coordinates. We call $Q$ the \textit{associated quadratic form}. 

	As shown in \cite{HamiltonDeformationComplex}, (see the Theorem on page 26), the set of fixed points $Fix(G)$ of such map $G$ is locally the set of fixed points of an actual smooth tame projection $P$, that is, a tame map satisfying $P^2=P$. We call it the \textit{realizing projection}. As will become clear from the construction, this projection is not unique, it may depend on the tame data and on the choice of \textit{smoothing operators}.

	The next theorem shows that the specific realizing projection constructed by Hamilton satisfies an additional property. Namely, if a tame linear map $T$ is compatible with the tame data of $X$ and commutes with the near-projection $G$, then there exists a realizing projection that also commutes with $T$.

\begin{thm}\label{thmequivarianceHam2}
	Let $X$ be a tame Fr\'echet space and $G:U\subseteq X\rightarrow X$ be a near-projection defined on an open neighborhood of $0$ in $X$.
    
    Then there exists an open neighborhood $W\subset U$ of $Fix(G)$ and a smooth tame map $P:W\subseteq X\rightarrow W\subseteq X$ with the following properties: $Fix(P)=Fix(G)\cap W$, $PP=P$, $P(0)=0$, and $DP(0)u=u$ whenever $DG(0)u=u$.
    
    Moreover, if $T: X\rightarrow X$ is a tame linear map that is compatible with the tame data of $X$ and $GT=TG$ on $T^{-1}(U)\cap U$ then
	\begin{equation*}
	    PT=TP \quad \text{on} \quad T^{-1}(W)\cap W.
	\end{equation*}
\end{thm}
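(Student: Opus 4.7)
The strategy is to follow Hamilton's construction of the realizing projection $P$ from a near-projection $G$ in \cite{HamiltonDeformationComplex}, carefully tracking where the tame data enter, and then to observe that when $T$ is both compatible with the tame data and commutes with $G$, every ingredient of the construction respects $T$. The existence of $P$ with the listed properties ($Fix(P) = Fix(G)\cap W$, $P\circ P = P$, $P(0)=0$, and $DP(0)u = u$ whenever $DG(0)u = u$) is exactly Hamilton's theorem from \cite{HamiltonDeformationComplex}, so the only genuinely new content is the equivariance relation $PT = TP$.

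First I would recall Hamilton's scheme. Using the tame data $(B,L,M)$ of $X$, one manufactures a one-parameter family of smoothing operators $S_\theta : X \to X$, $\theta \geq 1$, by setting $S_\theta = M \circ \tilde{S}_\theta \circ L$, where $\tilde{S}_\theta : \Sigma(B)\to\Sigma(B)$ is the canonical smoothing that acts on a sequence $(f_k)_k \in \Sigma(B)$ purely componentwise, typically by multiplying by a smooth cut-off of $k$ at a level proportional to $\log\theta$. The realizing projection $P(x)$ is constructed as the limit of a Newton-type iteration of the schematic form $x_{n+1} = x_n + S_{\theta_n}(G(x_n) - x_n)$ for a suitable divergent sequence $\theta_n$. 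The near-projection identity \eqref{def near-projection} is precisely what turns the leading-order defect of this iteration into a term quadratic in $G(x_n) - x_n$, which in turn drives convergence in every seminorm.

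The equivariance argument is then essentially formal. Compatibility of $T$ with the tame data supplies a bounded linear map $T_B : B \to B$ whose componentwise extension $T_{\Sigma(B)}$ satisfies $L T = T_{\Sigma(B)} L$ and $T M = M T_{\Sigma(B)}$. Because $\tilde{S}_\theta$ acts componentwise on $\Sigma(B)$, it commutes with $T_{\Sigma(B)}$; combining this with the two identities above yields $T S_\theta = S_\theta T$ for every $\theta$. With the hypothesis $T G = G T$ in hand, a straightforward induction on $n$ shows that the iterates started from $Tx$ are precisely $T x_n$, where $x_n$ are the iterates started from $x$. Passing to the limit gives $T P(x) = P(T x)$, that is, $PT = TP$ on $T^{-1}(W)\cap W$. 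The identities $P(0)=0$ and $DP(0)u=u$ on $\ker(DG(0)-Id)$ are inherited directly from Hamilton's construction.

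The main subtlety, more than a genuine obstacle, is ensuring that the neighborhood $W$ on which $P$ is defined can be chosen so that $PT = TP$ holds on $T^{-1}(W)\cap W$ without having to shrink $W$ in a $T$-dependent way. Because $T$ is tame and compatible with the tame data it preserves, up to fixed constants, the seminorms of $X$, and because the convergence criterion for the Newton iteration in \cite{HamiltonDeformationComplex} is phrased in terms of these seminorms in a neighborhood of $Fix(G)$, the set on which the iteration converges is automatically $T$-stable in the required sense. Verifying this amounts to book-keeping the constants in Hamilton's estimates; no new analytic input is needed.
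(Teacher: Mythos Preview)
Your proposal captures the key equivariance mechanism correctly: compatibility of $T$ with the tame data forces $T$ to commute with the smoothing operators, and together with $GT=TG$ this propagates through the Newton-type iteration, yielding $PT=TP$ in the limit. This is exactly the idea the paper uses.

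There is, however, a structural difference worth noting. You run the iteration directly in $X$, using smoothing operators $S_\theta = M\tilde S_\theta L$ pulled back from $\Sigma(B)$. The paper instead follows Hamilton's original construction in \cite{HamiltonDeformationComplex} literally: it lifts $G$ to a near-projection $\overline G$ on $\Sigma(B)$ via $\overline G(Lx+k)=LG(x)$, runs the iteration $\overline x_{n+1}=(I-S_{t_n})\overline x_n+S_{t_n}\overline G(\overline x_n)$ entirely in $\Sigma(B)$ to obtain $\overline P$, and only then sets $P=M\overline P L$. The equivariance is then checked at the $\Sigma(B)$ level (where $S_t$ commutes with the componentwise action of $\overline T$ trivially) and transported back via $LT=\overline T L$, $M\overline T=TM$. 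The advantage of the paper's route is that it cites Hamilton's convergence proof verbatim; your route would need the additional check that the pulled-back operators $M\tilde S_\theta L$ on $X$ still satisfy the smoothing estimates Hamilton needs, which is not entirely automatic since $L$ and $M$ have their own tame degrees. Your final paragraph about $T$-stability of $W$ is in fact unnecessary: the conclusion is only claimed on $T^{-1}(W)\cap W$, where both $P(x)$ and $P(Tx)$ are already defined, so no shrinking of $W$ is required.
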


\begin{proof}
    We review the proof of Hamilton's Theorem on page 26 of \cite{HamiltonDeformationComplex}, with focus on checking the equivariance of $P$ with respect to the compatible tame linear map $T: X\rightarrow X$ that satisfies $GT=TG$ on $T^{-1}(U)\cap U$.
    
    Let $(B,L,M)$ be the tame data of $X$ and let $\overline{T}: B\rightarrow B$ be the bounded linear map such that $LT=\overline{T}L$ and $M\overline{T}=TM$. Recall that $L$ is injective, $M$ is surjective, and $\Sigma(B)=L(X)\oplus\ker(M)$.
    
    We set $\overline{U}:=L(U)\oplus\ker(M)\subseteq\Sigma(B)$ and define the natural extension of $G$, that is, the map $\overline{G}:\overline{U}\subseteq \Sigma(B)\rightarrow\Sigma(B)$ given by
    $$\overline{G}(L(x)+k):=LG(x),\quad\forall x\in U,\,\forall k\in\ker(M).$$
    By construction, $\overline{G}L=LG$ and $M\overline{G}=GM$. Moreover, $\overline{L}x+k\in \overline{U}$ belongs to $Fix(\overline{G})$ if and only if $k=0$ and $LGx=Lx$, which is equivalent to $k=0$ and $Gx=x$, because $L$ is injective. In other words, $Fix(\overline{G})=L(Fix(G))$. 
    
	We claim that the map $\overline{G}$ is a near-projection on the open set $\overline{U}=M^{-1}(U)$, with $\overline{G}(0)=0$. In fact,  since $G$ satisfies (\ref{def near-projection}) for some associated quadratic form $Q:U\times X\times X\rightarrow X$, we can define the natural extension of $Q$ to $\Sigma(B)$, that is, the map $\overline{Q}:\overline{U}\times\Sigma(B)\times\Sigma(B)\rightarrow\Sigma(B)$, linear in the last two variables, given by
    $$\overline{Q}(L(x_0)+k_0)\{L(x_1)+k_1,L(x_2)+k_2\}=L(Q(x_0)\{x_1,x_2\}),$$
    for all $x_0\in U$, $x_1,x_2\in X$ and $k_0,k_1,k_2\in\ker(M)$. Since, for $x\in U$ and $k\in\ker(M)$ we have $\overline{G}(L(x)+k)-(L(x)-k)=L(G(x)-x)-k$, it follows that
    $$(D\overline{G})_{L(x)+k}(\overline{G}(L(x)+k)-L(x)-k)=L\big((DG)_x(G(x)-x)\big),$$
    and
    \begin{multline*}
        \overline{Q}(L(x)+k)\{\overline{G}(L(x)+k)-L(x)-k,\overline{G}(L(x)+k)-L(x)-k\} \\
        =L(Q(x)\{G(x)-x,G(x)-x\}).
    \end{multline*}
    Adding up the two previous equations, we conclude that $\overline{G}$ is a near-projection with $\overline{Q}$ as its associated quadratic form. By construction, both $\overline{G}$ and $\overline{Q}$ are smooth tame maps, and the claim is proven.
    
	We also have 
	\begin{equation*}
		\overline{G}\,\overline{T}=\overline{T}\,\overline{G} \quad \text{on} \quad \overline{T}^{-1}(\overline{U})\cap \overline{U}.
	\end{equation*} 
	In fact, for all $k\in ker(M)$, $MTk=\overline{T}Mk=0$, while for all $x\in X$, $\overline{T}Lx=LTx$. Hence, for every $x\in U$ and $k\in ker(M)$, $L(x)+k\in \overline{T}^{-1}(\overline{U})\cap \overline{U}$ and
    \begin{align*}
        \overline{G}(\overline{T}(L(x)+k))&=\overline{G}(L(Tx)+Tk)=LG(Tx)  \\
        & =LTG(x)=\overline{T}LG(x)= \overline{T}\,\overline{G}(L(x)+k).
    \end{align*}
    
    Let $s:\R\rightarrow\R$ be a smooth map satisfying $s(t)=0$ for $t\leq0$, $s(t)=1$ for $t\geq1$, and $0\leq s(t)\leq1$ in between. 
    For every $t>0$, the {\it smoothing operator} $S_t:\Sigma(B)\rightarrow\Sigma(B)$ maps the sequence $f=(f_k)$ in $\Sigma(B)$ to the sequence $S_t(f)=(S_t(f)_k)$ where
    $$S_t(f)_k:=s(\log(t)-k)f_k.$$
   (\textit{Cf}. \cite{HamiltonNashMoser}, Part III, Section 1.4). Notice that $S_t(f_k)\rightarrow (f_k)$ as $t\rightarrow \infty$. Also, they satisfy the estimates on page 28 of \cite{HamiltonDeformationComplex}, for every $t\geq 1$ and positive integer $\ell\leq m$, 
   \begin{align*}
  	||S_t((f_k))||_m\leq Ct^{m-\ell+s}||(f_k)||_{\ell} , \\ ||(I-S_t)((f_k))||_\ell\leq Ct^{\ell-m+s}||(f_k)||_{m},
  \end{align*}
  for constants that $C$ may depend on $m$ and $n$ and an integer $s\geq 2$ that depends only on the degree of the tame estimates of $\overline{G}$ and $\overline{Q}$ (compare with Lemma 1.4.1 in \cite{HamiltonNashMoser}, Part III, noticing that we took $t\geq 1$).
  
  Finally, it is clear from the definition that $\overline{T}S_t=S_t\overline{T}$ for every $t$. 
    
    Fix a real number $t_0\geq 3$. Given $\overline{x}_0\in\overline{U}\subseteq\Sigma(B)$, consider the algorithm
    \begin{align*}
        t_{n+1}&:=t_n^{3/2},\\
        \overline{x}_{n+1}&:=(I-S_{t_n})\overline{x}_n+S_{t_n}\overline{G}(\overline{x}_n).
    \end{align*}
    Notice that $t_n$ is increasing and divergent, and that $\overline{x}_n$ is the constant sequence if $\overline{x}_0\in Fix(\overline{G})$.
    As proved in \cite{HamiltonDeformationComplex}, pages 26-40, there exists an open subset $\overline{W}\subseteq \overline{U}$ with $Fix(\overline{G})\subseteq\overline{W}$, such that, for every $\overline{x}_0\in\overline{W}$, 
    $$\overline{P}(\overline{x}_0):=\lim_{n\to\infty} \overline{x}_n,$$
    is well defined and determines a smooth tame map $\overline{P}:\overline{W}\subseteq X\rightarrow \overline{W}\subseteq X$ such that 
    $\overline{P}\,\overline{P}=\overline{P}$ and $Fix(\overline{P})=Fix(\overline{G})\cap \overline{W}$. Since the algorithm, for every $\overline{x}_0=Lx_0+k\in \overline{U}$, defines the same sequence $\overline{x}_n$, $n\geq 1$, for every $k\in Ker(M)$, we may take $\overline{W}$ of the form $L(W)\oplus Ker(M)$ for the open subset $W=L^{-1}(\overline{W})$.
    
    We claim that $\overline{P}\,\overline{T}=\overline{T}\,\overline{P}$ on $\overline{T}^{-1}(\overline{W})\cap \overline{W}$. In fact, suppose that $\overline{x}_0\in \overline{U}$ is such that $\overline{y}_0:=\overline{T}(\overline{x}_0)\in \overline{U}$. Since $\overline{G}\,\overline{T}=\overline{T}G$ and $S_t\overline{T}=\overline{T}S_t$ for every $t$, it is straightforward to check that the sequence $\{\overline{y}_n\}$ that the algorithm defines for the initial condition $\overline{y}_0$ is the sequence $\overline{y}_{n}=\overline{T}(\overline{x}_n)$, where $\{\overline{x}_n\}$ is the sequence the algorithm defines for the initial condition $\overline{x}_0$. Thus, by definition of $\overline{P}$ and the continuity of $\overline{T}$, whenever $\overline{x}_0\in \overline{W}\cap \overline{T}^{-1}(\overline{W})$, we have $\overline{P}\,\overline{T}(x_0) = \overline{P}\overline{y}_0=\lim \overline{y}_n=\lim\overline{T}(\overline{x}_n)=\overline{T}(\lim \overline{x}_n)=\overline{T}\,\overline{P}(x_0)$.
    
    The open subset $W:=L^{-1}(\overline{W})=  M(\overline{W})= U$ contains $0$. Define $P:=M\overline{P}L:W\subseteq X\rightarrow W\subseteq X$, which is a smooth tame map (as a composition of smooth and tame maps).
   
   	In order to prove that $PP=P$ and $Fix(P)=Fix(G)\cap W$, it is enough to check that $Fix(G)\cap W=Fix(P)=Im(P)$. 
   	
   	By the definition of $\overline{G}$, we deduce that $Fix(\overline{G})\cap \overline{W}=L(Fix(G)\cap W)$. Since $Fix(\overline{G})\cap \overline{W}=Fix(\overline{P})=Im(\overline{P})$, we conclude in particular that $Im(\overline{P})= L(Fix(G)\cap W)$. Also, if $x\in W$, then $Lx\in \overline{W}$ and
    \begin{equation*}
		G(x)=x\, \Rightarrow \overline{G}(Lx)=Lx\, \Rightarrow  \overline{P}(Lx)=Lx\, \Rightarrow Px=M\overline{P}(Lx)=MLx=x,
    \end{equation*}
    so that $Fix(G)\cap W\subseteq Fix(P)$. Finally,
    \begin{multline*}
    	Fix(P)\subseteq Im(P)=M\overline{P}L(W) \subseteq M(Im(\overline{P})) \\ = ML(Fix(G)\cap W)=Fix(G)\cap W. 
    \end{multline*}
    Thus $Fix(P)=Im(P)=Fix(G)\cap W$, as claimed.
    
    We have $G(0)=0$, and the algorithm starting at $\overline{x}_0=0$ clearly converges to $0$. Thus $P(0)=M\overline{P}L(0)=0$. Similarly, as in \cite{HamiltonDeformationComplex} pages 37-38, and using $ML=Id$, we can check that
    \begin{equation*}
    	DG(0)\cdot u=u\, \Rightarrow D\overline{G}(0)\cdot Lu=Lu\, \Rightarrow D\overline{P}(0)\cdot Lu=Lu\, \Rightarrow DP(0)\cdot u=u.
    \end{equation*}
    Finally, the compatibility of $T$ implies that, 
    \begin{equation*}
   		PT=M\overline{P}LT=M\overline{P}\,\overline{T}L= M\overline{T}\,\overline{P}L=TM\overline{P}L=TP
   	\end{equation*}
on $T^{-1}(W)\cap W$, as we wanted to show.
\end{proof}

	Using Theorem \ref{thmequivarianceHam2}, we prove the equivariant version of Theorem 3.3.1. in Part III of \cite{HamiltonNashMoser} (compare Theorem 5.1 in \cite{ACM}).

\begin{thm}\label{thmequivariantHamIFTquadraticerror}
Let $F$ and $H$ be tame Fr\'{e}chet spaces and let $\Lambda$ be a smooth tame map defined on an open set $U\subseteq F$ containing the origin,
\begin{equation*}
\Lambda:U \subseteq F \rightarrow H,
\end{equation*}
with $\Lambda(0)=0$. Suppose there exists a smooth tame map $V(f)h$ linear in $h$,
\begin{equation*}
V:U\times H\rightarrow F,
\end{equation*}
and a smooth tame map $Q(f)\{h,k\}$ bilinear in $h$ and $k$,
\begin{equation*}
Q:U\times H \times H \rightarrow H,
\end{equation*}
such that, for all $f\in U$ and all $h\in H$,
\begin{equation*}
D\Lambda(f)V(f)h = h+Q(f)\{\Lambda(f),h\}.
\end{equation*}
	Then there exists a neighborhood $W\subseteq U$ with $0\in W$, and a smooth tame map
\begin{equation*}
\Gamma:Ker D\Lambda(0)\cap W \rightarrow \Lambda^{-1}(0),
\end{equation*}
such that
\begin{equation*}
	\Gamma(0)=0 \quad \text{and} \quad D\Gamma(0)v=v \quad \text{for every} \quad v\in Ker(D\Lambda(0)).
\end{equation*}
	
	If, moreover, there exist tame linear maps $T_F: F\rightarrow F$ and $T_H : H \rightarrow H$ that are compatible with the respective tame data and are such that
\begin{equation*}
	\Lambda T_F=T_H\Lambda \quad \text{on} \quad T_F^{-1}(W)\cap W 
\end{equation*}
and
\begin{equation*}
	V(T_F\times T_H)=T_FV \quad \text{on} \quad (T_F\times T_H)^{-1}(W\times H)\cap W\times H.
\end{equation*}
then the map $\Gamma$ also satisfies
\begin{equation*}
	\Gamma T_F = T_F\Gamma \quad \text{on} \quad Ker D\Lambda(0)\cap T_F^{-1}(W)\cap W.
\end{equation*}
\end{thm}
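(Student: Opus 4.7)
The strategy is to adapt the standard proof of Theorem 3.3.1 in Part III of \cite{HamiltonNashMoser} (equivalently, Theorem 5.1 in \cite{ACM}), using Theorem \ref{thmequivarianceHam2} as the equivariant replacement for Hamilton's general result on near-projections. The non-equivariant part of the conclusion will follow from the classical construction; the new content is that the resulting $\Gamma$ intertwines $T_F$ with itself.

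On the open set $U$, define the smooth tame map
\begin{equation*}
	G:U\subseteq F\rightarrow F,\qquad G(f):=f-V(f)\Lambda(f).
\end{equation*}
A direct computation using $D\Lambda(f)V(f)h=h+Q(f)\{\Lambda(f),h\}$ shows that $G$ satisfies the near-projection identity \eqref{def near-projection} with an associated quadratic form $\widetilde{Q}$ built tamely from $V$, $DV$ and $Q$, exactly as in \cite{ACM}, Section 5.2. Near $0$, the fixed-point set of $G$ coincides with $\Lambda^{-1}(0)$: indeed $V(f)\Lambda(f)=0$ implies, on applying $D\Lambda(f)$, that $\Lambda(f)+Q(f)\{\Lambda(f),\Lambda(f)\}=0$, which forces $\Lambda(f)=0$ for $f$ close to $0$. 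Moreover, since $D\Lambda(0)V(0)=Id$, the derivative $DG(0)=Id-V(0)D\Lambda(0)$ is a continuous linear projection with image $Ker\,D\Lambda(0)$.

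The equivariance of $G$ follows by a one-line check using the two compatibility hypotheses:
\begin{equation*}
	G(T_F f)=T_F f-V(T_F f)T_H\Lambda(f)=T_F f-T_FV(f)\Lambda(f)=T_FG(f)
\end{equation*}
on $T_F^{-1}(U)\cap U$. Applying Theorem \ref{thmequivarianceHam2} to $G$ then yields an open neighborhood $W\subseteq U$ of $0$ and a smooth tame realizing projection $P:W\rightarrow W$ with $Fix(P)=\Lambda^{-1}(0)\cap W$, $P(0)=0$, $DP(0)v=v$ for every $v\in Ker\,DG(0)=Ker\,D\Lambda(0)$, and $PT_F=T_FP$ on $T_F^{-1}(W)\cap W$. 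The subspace $Ker\,D\Lambda(0)$ is $T_F$-invariant, since differentiating $\Lambda T_F=T_H\Lambda$ at $0$ gives $D\Lambda(0)T_F=T_HD\Lambda(0)$. I then define
\begin{equation*}
	\Gamma:Ker\,D\Lambda(0)\cap W\rightarrow \Lambda^{-1}(0),\qquad \Gamma(v):=P(v),
\end{equation*}
and read off all four required properties: $\Gamma(0)=0$, $D\Gamma(0)v=DP(0)v=v$, $Im(\Gamma)\subseteq Fix(P)=\Lambda^{-1}(0)\cap W$, and $\Gamma T_F=T_F\Gamma$ on $Ker\,D\Lambda(0)\cap T_F^{-1}(W)\cap W$.

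The main technical work is really hidden in Theorem \ref{thmequivarianceHam2}: what makes Hamilton's inductive construction of the realizing projection commute with $T_F$ is the compatibility of $T_F$ with the tame data of $F$, so that the induced map $\overline{T}_F$ on $\Sigma(B)$ commutes with the coordinatewise smoothing operators $S_t$. Once that equivariant realizing projection is available, the step from $P$ to $\Gamma$ is purely formal, and I do not anticipate further obstacles.
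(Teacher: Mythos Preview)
There is a gap in your verification that $G(f)=f-V(f)\Lambda(f)$ is a near-projection on $F$. The identity \eqref{def near-projection} demands a smooth tame bilinear map $\widetilde{Q}(f)\{\cdot,\cdot\}:F\times F\to F$ with $DG(f)(G(f)-f)=-\widetilde{Q}(f)\{G(f)-f,G(f)-f\}$. A direct computation gives
\begin{equation*}
	DG(f)(G(f)-f)=[DV(f)\{V(f)\Lambda(f)\}]\,\Lambda(f)+V(f)Q(f)\{\Lambda(f),\Lambda(f)\},
\end{equation*}
which involves $\Lambda(f)$ itself, not only $G(f)-f=-V(f)\Lambda(f)$. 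Since $V(f)$ need not admit a tame left inverse, there is no evident way to express the right-hand side as a bilinear form in $G(f)-f$ alone. Your appeal to \cite{ACM}, Section 5.2, does not help here: that argument (following Hamilton) works on the product $F\times H$, not on $F$.

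The paper resolves this precisely by enlarging the domain: one sets $G:(U\subset F)\times H\to F\times H$, $G(f,h)=(f-V(f)\Lambda(f),\,h-\Lambda(f))$. Now $G(f,h)-(f,h)=(-V(f)\Lambda(f),-\Lambda(f))$ carries $\Lambda(f)$ in its second component, and the explicit
\begin{equation*}
	\widetilde{Q}(f,h)\{(\tilde a,\tilde c),(a,c)\}=-\bigl([DV(f)\{\tilde a\}]\,c+V(f)Q(f)\{\tilde c,c\},\ Q(f)\{\tilde c,c\}\bigr)
\end{equation*}
realizes the near-projection identity. One then checks $G(T_F\times T_H)=(T_F\times T_H)G$ from the two hypotheses, applies Theorem~\ref{thmequivarianceHam2} on $F\times H$ (compatibility of $T_F\times T_H$ with the product tame data coming from Examples~\ref{examtameproduct} and~\ref{exam product of compatible maps}), and defines $\Gamma(f)=\pi_F(P(f,0))$. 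Your remaining steps---the equivariance of $G$, the identification of fixed points with $\Lambda^{-1}(0)$, the invariance of $Ker\,D\Lambda(0)$ under $T_F$, and the derivative at $0$---all transfer to this setting with only cosmetic changes.
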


\begin{proof}
	According to the proof of the aforementioned Theorem 3.3.1 in \cite{HamiltonNashMoser} (see \cite{ACM}, Theorem 5.1), the smooth tame map
	\begin{equation*}
		G : (U\subset F) \times H \rightarrow F\times H
	\end{equation*}
	given by
	\begin{equation*}
		G(f,h)= (f - V(f)\Lambda(f),h-\Lambda(f))
	\end{equation*}
	is a near-projection, so that, for every $x=(f,g)\in U\times H$,
	\begin{equation*}
		DG(x)\cdot (G(x) - x) + \widetilde{Q}(x)\cdot \{G(x)-x,G(x)-x\} = 0 
	\end{equation*}
	for the smooth tame map 
	\begin{equation*}
		 \widetilde{Q} : (U\subset F) \times H \times (F\times H) \times (F\times H) \rightarrow F\times H
	\end{equation*}
	given by
	\begin{multline*}
		\widetilde{Q}(f,h)\cdot\{(\widetilde{a},\widetilde{c}),(a,c)\} {=} \\
		-\left([DV(f)\cdot\{\widetilde{a}\}]\cdot \{c\} + V(f)Q(f)\cdot\{\widetilde{c},c\}, Q(f)\cdot\{ \widetilde{c},c \} \right).
	\end{multline*}
	It is then straightforward to check that the compatible maps $T_F$ and $T_H$ are such that
	\begin{equation*}
		G(T_F\times T_H)=(T_F\times T_H)G \quad \text{on} \quad (T_F\times T_H)^{-1}(U\times H)\cap U\times H.
	\end{equation*}
	
	Notice that, also by construction, $G(f,h)=(f,h)$ if and only if $\Lambda(f)=0$. In other words, 
	\begin{equation*}
		\Lambda^{-1}(0)\cap U = \pi_F(Fix(G)),
	\end{equation*}
	where $\pi_F : F\times H \rightarrow F$ is the standard projection. 
	
	By Theorem \ref{thmequivarianceHam2}, there exists a smooth tame map
	\begin{equation*}
		P : \widetilde{W} \subset (F\times H) \rightarrow \widetilde{W}\subseteq  (F\times H),
	\end{equation*}		
	defined on an open neighborhood of the origin ${\widetilde{W}}\subset U\times H$, that is a realizing projection $P=PP$ with $P(0)=0$, $DP(0)\cdot u=u$ whenever $DG(0)\cdot u = u$, and
	\begin{equation*}
		P(T_F\times T_H)=(T_F\times T_H)P \quad \text{on} \quad \widetilde{W}\cap (T_F\times T_H)^{-1}(\widetilde{W}).
	\end{equation*}
	
	Finally, letting $W=\pi_F(\widetilde{W}) \subset F$ and  
	\begin{equation*}
		\Gamma : Ker(D\Lambda(0))\cap W \rightarrow F
	\end{equation*}
	be defined by
	\begin{equation*}
		\Gamma(f) = \pi_F (P(f,0)),
	\end{equation*}		
	we obtain the map $\Gamma$ with all the desired properties including the equivariance, because, for every $f\in Ker(D\Lambda(0))\cap W\cap T_F^{-1}(W)$,
	\begin{multline*}
		\Gamma(T_F(f))=\pi_F(P(T_Ff,0))=\pi_F(P(T_F\times T_H)(f,0))\\=\pi_F((T_F\times T_H)P(f,0))=T_F\pi_F(P(f,0))=T_F\Gamma(f).
	\end{multline*}
\end{proof}

\subsection{Applications} \label{subsectionapplication}
The main ingredient in the proof of Theorem \ref{thmA} was Theorem 5.1 of \cite{ACM} (which is Theorem 3.2.1. in Part III of \cite{HamiltonNashMoser}). In the last section, we extended that result, with the help of the notion of compatible maps, so that we can now verify equivariance with respect to suitable tame maps (see Theorem \ref{thmequivariantHamIFTquadraticerror}). This has laid down the path towards the proofs of Theorems \ref{thmB}, \ref{thmC}, \ref{thmD}, and \ref{thmE}.

	In our setup, the key step is to check the compatibility and equivariance with respect to the natural action of $O(n+1)$.

	Let $\Lambda: U\subseteq F\rightarrow H$ be the smooth tame map defined in (\ref{Lambda1 and Lambda2}), defined on an open neighborhood of $0$ in the tame Fréchet space $F$ and taking values on the tame Fr\'echet space $H$.

\begin{lem}\label{O(n+1) action is compatible with tame data}
    We can choose the tame data for the Fréchet spaces $F$ and $H$ in \eqref{F and H} in such a way that the natural action of every $A\in O(n+1)$ is compatible with their tame data.
\end{lem}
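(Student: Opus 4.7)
The plan is to exhibit each of the four factors appearing in the definitions of $F$ and $H$ as a tame direct summand of either $C^\infty(\S^n)$ or $C^\infty(T_1\S^n)$, using $O(n+1)$-equivariant inclusion and retraction maps. Once this is achieved, Example \ref{Ay compatible -> Ax compatible} yields compatibility of the natural $O(n+1)$ action on each factor with the induced tame data (coming via Example \ref{examtamefactor} from the tame data of $C^\infty(\S^n)$ and $C^\infty(T_1\S^n)$ fixed in Examples \ref{exam tame data C(Sn)} and \ref{exam tame data C(T1Sn)}, where the equivariance was already established). A final invocation of Example \ref{exam product of compatible maps} assembles the factors into the required compatibility for the products $F$ and $H$.

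First, I would identify $C^\infty(\RP^n)$ with the subspace of antipodally-invariant functions in $C^\infty(\S^n)$. Writing $\pi:\S^n\to\RP^n$ for the covering projection, the inclusion $L_1(f)=f\circ \pi$ and the retraction $M_1(f)(\pi(x))=\tfrac{1}{2}(f(x)+f(-x))$ are tame linear with $M_1L_1=Id$, and both are $O(n+1)$-equivariant because $O(n+1)$ commutes with the antipodal map. Next, $C^\infty_0(\RP^n)$ is a tame direct summand of $C^\infty(\RP^n)$ via the inclusion and the zero-average projection $f \mapsto f - \mathrm{Vol}(\RP^n,can)^{-1}\int_{\RP^n} f\, dA_{can}$; both maps are tame linear and $O(n+1)$-equivariant because the canonical measure is invariant. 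Composing these two reductions presents $C^\infty_0(\RP^n)$ as an equivariant tame direct summand of $C^\infty(\S^n)$.

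For the second factor, I would first exhibit $C^\infty_{*,odd}(T_1\S^n)$ as a tame direct summand of $C^\infty(T_1\S^n)$ via the inclusion and the projection $f(x,v)\mapsto \tfrac{1}{2}(f(x,v)-f(x,-v))$ onto the odd part in the second variable; this retraction is manifestly $O(n+1)$-equivariant. Next, I would use the splitting \eqref{eqdecompositionC*odd} to present $C^\infty_{0,odd}(T_1\S^n)$ as a tame direct summand of $C^\infty_{*,odd}(T_1\S^n)$ via the inclusion and the retraction $\Phi\mapsto \Phi - j\,C(\Phi)$, where $C$ is the center map and $j$ is its right-inverse from \cite{ACM}, Proposition 3.5. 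Evidently $C$ is $O(n+1)$-equivariant from its defining formula, since the canonical measure on each equator $\Sigma_v$ and the pairing $\langle u,x\rangle$ are both $O(n+1)$-invariant. Chaining these two steps yields $C^\infty_{0,odd}(T_1\S^n)$ as an equivariant tame direct summand of $C^\infty(T_1\S^n)$.

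The main obstacle is the $O(n+1)$-equivariance of the right-inverse $j:\Omega^1_{\mathrm{even}}(\S^n)\to C^\infty_{*,odd}(T_1\S^n)$. A direct inspection of the explicit formula in \cite{ACM}, Proposition 3.5, should show that $j$ is built purely from $O(n+1)$-invariant data associated to the round metric and hence is already equivariant. Alternatively, because $O(n+1)$ is compact and $C$ is equivariant, one can replace any right-inverse $j$ by its symmetrization
\begin{equation*}
\tilde\jmath(\alpha)\ :=\ \int_{O(n+1)} T_{A^{-1}}\,j(T_A\alpha)\,dA,
\end{equation*}
which is tame, equivariant, and still satisfies $C\tilde\jmath=Id$. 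With the four equivariant splittings in hand, Example \ref{Ay compatible -> Ax compatible} gives compatible tame data on each factor, and Example \ref{exam product of compatible maps} yields compatible tame data on $F$ and $H$ such that every $T_A$, $A\in O(n+1)$, is compatible with it.
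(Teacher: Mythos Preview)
Your proposal is correct and follows essentially the same route as the paper: exhibit $C^\infty_0(\RP^n)$ and $C^\infty_{0,odd}(T_1\S^n)$ as equivariant tame direct summands of $C^\infty(\S^n)$ and $C^\infty(T_1\S^n)$ respectively (via averaging over the antipodal map, subtracting the mean, projecting to the odd part, and applying $\Phi\mapsto\Phi-jC\Phi$), then invoke Examples \ref{examtamefactor}, \ref{Ay compatible -> Ax compatible}, and \ref{exam product of compatible maps}. You are in fact more careful than the paper on one point: the paper asserts the compatibility of the $O(n+1)$ action with the retraction $\Phi\mapsto\Phi-jC\Phi$ without comment, whereas you explicitly identify the equivariance of $j$ as the only non-obvious step and supply two ways to handle it (direct inspection, or symmetrization over the compact group).
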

\begin{proof}
	According to Examples \ref{exam action of o(n+1) with C(Sn)} and \ref{exam tame data C(T1Sn)}, the action of every $A\in O(n+1)$ is compatible with the chosen tame data of $C^{\infty}(\mathbb{S}^n)$ and $C^{\infty}(T_1\mathbb{S}^n)$. We will show that the spaces  $C_0^\infty(\mathbb{RP}^n)$ and $C_{0,\text{odd}}^\infty(T_1\S^n)$ are tame direct summands of $C^{\infty}(\mathbb{S}^n)$ and $C^\infty(T_1\S^n)$, respectively. Choosing their tame data as in Example \ref{examtamefactor}, and the tame data of the Cartesian products $F$ and $H$ as in Example \ref{examtameproduct}, the compatibility with the natural action of every $A\in O(n+1)$ will follow immediately (see Examples \ref{Ay compatible -> Ax compatible} and \ref{exam product of compatible maps}).

    Let $i_0:C^\infty_0(\RP^n)\rightarrow C^\infty(\RP^n)$ be the inclusion and $p_0:C^\infty(\RP^n)\rightarrow C^\infty_0(\RP^n)$ be the tame linear map given by 
    $$p_0(f)=f-\frac{1}{\text{Vol}(\RP^n)}\int_{\RP^n}f(\sigma)dA_{\text{can}}(\sigma).$$
    Since $p_0i_0=Id$, $C^\infty_0(\RP^n)$ is a tame direct summand of $C^\infty(\RP^n)$.

    On the other hand, let $\pi^*:C^\infty(\RP^n)\rightarrow C^\infty(\S^n)$ be the pull-back of the canonical projection $\pi:\S^n\rightarrow\RP^n$, and $p_1:C^\infty(\S^n)\rightarrow C^\infty(\RP^n)$ the tame linear map given by
    $$\big(p_1(f)\big)([v])=\frac{1}{2}(f(v)+f(-v)),\quad\forall [v]\in\RP^n.$$
    Since $p_1\pi^*=Id$, $C^\infty(\RP^n)$ is a tame direct summand of $C^\infty(\S^n)$. 
    
    Hence, $C_0^\infty(\RP)$ is a tame direct summand of $C^\infty(\S^n)$ and has induced tame data as in Example \ref{examtamefactor}. Moreover, the natural action of every $A\in O(n+1)$ in each space $C_0^\infty(\RP)$ and $C^\infty(\S^n)$ is compatible in the sense of Definition \ref{defn compatibility of two maps AX and Ay}. Therefore, this action is compatible with the induced tame data of $C^\infty_0(\RP^n)$ (by Example \ref{Ay compatible -> Ax compatible}).

    The space $C^{\infty}_{0,odd}(T_1\mathbb{S}^n)$ is treated similarly. Let $k:C^\infty_{0,odd}(T_1\S^n)\rightarrow C^\infty(T_1\S^n)$ be the inclusion map, and consider the map $q:C^\infty(T_1\S^n)\rightarrow C^\infty_{0,odd}(T_1\S^n)$ given by the composition of the tame linear maps 
    \begin{align*}
        C^\infty(T_1\S^n)&\rightarrow C^\infty_{*,odd}(T_1\S^n)\\
        \Phi&\mapsto\frac{1}{2}(\Phi(\cdot,\cdot)-\Phi(\cdot,-\cdot)),
    \end{align*}
    and (recall \eqref{eqdecompositionC*odd})
    \begin{align*}
        C^\infty_{*,odd}(T_1\S^n)&\rightarrow C^\infty_{0,odd}(T_1\S^n)\\
        \Phi&\mapsto\Phi-jC\Phi.
    \end{align*}
    Since that $qk=Id$, $C^\infty_{0,odd}(T_1\S^n)$ is a tame direct summand of $C^\infty(T_1\S^n)$. The natural action on both spaces of each $A\in O(n+1)$ is compatible. Therefore, it is also compatible with the induced tame data of $C^\infty_{0,\text{odd}}(T_1\S^n)$ (see Example \ref{Ay compatible -> Ax compatible}).
\end{proof}

	This tame data for $F$ and $H$ is fixed once and for all.

\begin{lem}\label{lemmequivariantLambdaV}
   The tame maps $\Lambda$ and $V$ in Theorem \ref{thmGamma} are equivariant with respect to the natural action of any $A\in O(n+1)$.
\end{lem}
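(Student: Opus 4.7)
The plan is to trace the $O(n+1)$-naturality through each of the constructions that produce $\Lambda$ and $V$. Fix $A\in O(n+1)$ and let $T_A$ denote pre-composition with $A$, or with $A\times A$ on $T_1\mathbb{S}^n$, on each of the function spaces at hand; by Lemma \ref{O(n+1) action is compatible with tame data} it restricts to an action on $F$ and $H$ that is compatible with the tame data.

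The proof begins with two elementary geometric identities. First, the star-shaped embedding satisfies
\begin{equation*}
   \iota_{T_A\psi} = A^{-1}\circ \iota_\psi \circ A,
\end{equation*}
so $g_{T_A\psi} = A^{*}g_\psi$ and $A:(\mathbb{S}^n,g_{T_A\psi})\to (\mathbb{S}^n,g_\psi)$ is an isometry. Second, the graphical deformation formula gives
\begin{equation*}
   A\circ F_v^{T_A\Phi} = F_{Av}^{\Phi}\circ A,
\end{equation*}
so $\Sigma_v(T_A\Phi)=A^{-1}(\Sigma_{Av}(\Phi))$ and, differentiating, the transverse vector of \eqref{eqvarivecton} transforms as $dA\cdot n_v^{T_A\Phi}(y)=n_{Av}^{\Phi}(Ay)$.

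Combining these two identities immediately yields equivariance of the area functional, $\mathcal{A}_v(T_A\psi,T_A\Phi)=\mathcal{A}_{Av}(\psi,\Phi)$, since isometries preserve area; and, through the formula $\mathcal{H}_v = (\hat{\mathcal{H}}_v\circ F_v)(c\circ F_v)J_v$ from the proof of Lemma \ref{lemH=0meansmin}, equivariance of the Euler--Lagrange operator:
\begin{equation*}
   \mathcal{H}(T_A\psi,T_A\Phi)(x,v) \;=\; \mathcal{H}(\psi,\Phi)(Ax,Av).
\end{equation*}
The remaining operations defining $\Lambda$ from $\mathcal{A}$ and $\mathcal{H}$ are themselves equivariant: the $\mathbb{RP}^n$-average is $T_A$-invariant (the canonical measure is $O(n+1)$-invariant), and the splitting \eqref{eqdecompositionC*odd} via the center map $C$ and its explicit right-inverse $j$ intertwines $T_A$, since both maps are defined by integrations over the equators $\Sigma_v$ against $O(n+1)$-natural integrands. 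Hence $\Lambda=(\Lambda_1,\Lambda_2)$ is equivariant.

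For $V=(V_1,V_2)$, recall that it is assembled from the inverse $\mathcal{S}(\psi,\Phi)$ of the Jacobi-type operator $\mathcal{P}(\psi,\Phi)$ and from the right-inverse
\begin{equation*}
   \mathcal{R}(\psi,\Phi) \;=\; \mathcal{F}^{*}(\psi,\Phi)\circ\bigl(\mathcal{F}(\psi,\Phi)\circ\mathcal{F}^{*}(\psi,\Phi)\bigr)^{-1}
\end{equation*}
of the generalized Funk transform. The operator $\mathcal{P}(\psi,\Phi)$ is a partial derivative of the equivariant map $\Lambda_2$, hence equivariant; the transforms $\mathcal{F}(\psi,\Phi)$ and $\mathcal{F}^{*}(\psi,\Phi)$ are equivariant because they integrate the $O(n+1)$-natural kernels $\mathcal{Q}$, $T$, $U(\Phi)$ over the incidence set $[F(\Phi)]$, which itself satisfies $(Ay,[Av])\in [F(T_A\Phi)]\Leftrightarrow (y,[v])\in[F(\Phi)]$. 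Since an equivariant invertible linear operator has an equivariant inverse, $\mathcal{S}$ and $\mathcal{R}$ are equivariant, and therefore so are the building blocks of $V$. The only real bookkeeping is checking that the measures on $\Sigma_v$, $\Sigma_v(\Phi)$ and $\Sigma_y^{*}(\Phi)$, together with the gradients $\nabla^v$ appearing in $\mathcal{H}_v$, transform in the expected way under the isometry $A$; this is the routine consequence of naturality of Riemannian invariants under an ambient isometry, and is the most tedious (but not substantive) part of the argument.
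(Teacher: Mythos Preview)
Your proof is correct and follows exactly the approach the paper takes: verifying that each ingredient in the construction of $\Lambda$ and $V$ (the area functional $\mathcal{A}$, the Euler--Lagrange operator $\mathcal{H}$, the center map $C$ and its right-inverse $j$, the Funk transforms $\mathcal{F}$, $\mathcal{F}^*$, and the inverses $\mathcal{S}$, $\mathcal{R}$) is $O(n+1)$-equivariant. The paper's own proof merely asserts that this check is a ``straightforward computation'' and notes that the domain $U$ can be chosen $O(n+1)$-invariant (as a small $C^{3n+4}$-ball), whereas you have actually carried out the verification.
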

\begin{proof}
    A straightforward computation shows the compatibility of the natural action of any $A\in O(n+1)$ with all maps involved in the definition of $\Lambda$ and $V$. Thus, $\Lambda A=A\Lambda$ and $VA=AV$ on $U$ (which can be itself chosen to be invariant under every action by an element of $O(n+1)$, since the action preserves $C^k$ norms of functions in $C^{\infty}(\mathbb{S}^n)$ and $C^{\infty}(T_1\mathbb{S}^n)$ and we may take $U$ as a sufficiently small ball around the origin in the $C^{3n+4}$ norm).
\end{proof}

\begin{prop}\label{propparathmB}
    In the context of Theorem \ref{thmA} and its proof, take $A\in O(n+1)$ and consider the maps $f$ and $ \overline{f}=fA\in C^{\infty}_{odd}(\mathbb{S}^n)$. Then
	\begin{equation*}
		A\overline{\iota}_t=\iota_tA,\quad A^*g_t=\overline{g}_t\quad \text{and} \quad A\Sigma_v(\overline{\Phi}_t)=\Sigma_{Av}(\Phi_t),
	\end{equation*}
\end{prop}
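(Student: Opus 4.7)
The plan is to deduce the three identities from the equivariance of the map $\Gamma$ provided by Theorem \ref{thmequivariantHamIFTquadraticerror}, once one checks that the natural actions of $A\in O(n+1)$ are compatible with all the ingredients. Denote by $T_A\colon F\to F$ and $T_A\colon H\to H$ the linear maps induced on the Fr\'echet spaces $F$ and $H$ in \eqref{F and H} by pre-composition with $A$ (on $\mathbb{S}^n$) and with $A\times A$ (on $T_1\mathbb{S}^n$). By Lemma \ref{O(n+1) action is compatible with tame data}, these maps are compatible with the tame data fixed there; and by Lemma \ref{lemmequivariantLambdaV}, they intertwine $\Lambda$ and the auxiliary map $V$ appearing in the proof of Theorem \ref{thmGamma}. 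Therefore, Theorem \ref{thmequivariantHamIFTquadraticerror} applies to give $\Gamma\circ T_A=T_A\circ \Gamma$ on the relevant neighborhood of the origin of $\ker D\Lambda(0,0)$.

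The key step is then to identify $T_A$ on the initial data $(tf,t\phi(f))$. The first coordinate is immediate, since $T_A(tf)=t(f\circ A)=t\bar{f}$. For the second, recall from Proposition \ref{propkernel} that $\phi(f)$ is the unique element of $C^\infty_{0,odd}(T_1\mathbb{S}^n)$ solving, for each $v\in\mathbb{S}^n$,
\begin{equation*}
    \Delta_{(\Sigma_v,can)}\phi(f)_v+(n-1)\phi(f)_v=(n-1)\langle\nabla f,v\rangle\quad\text{on }\Sigma_v.
\end{equation*}
Since $A$ is an isometry of $(\mathbb{S}^n,can)$ mapping $\Sigma_v$ onto $\Sigma_{Av}$, it commutes with the Laplacian on these equators, and a direct computation using $\nabla\bar{f}(x)=A^{T}\nabla f(Ax)$ together with orthogonality yields
\begin{equation*}
    \langle\nabla f(Ax),Av\rangle=\langle\nabla\bar{f}(x),v\rangle.
\end{equation*}
Hence $(x,v)\mapsto\phi(f)(Ax,Av)$ satisfies the defining equation of $\phi(\bar{f})$; by uniqueness, $T_A\phi(f)=\phi(\bar{f})$. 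Consequently $T_A(tf,t\phi(f))=(t\bar{f},t\phi(\bar{f}))$, and the equivariance of $\Gamma$ gives $(\bar{\psi}_t,\bar{\Phi}_t)=T_A(\psi_t,\Phi_t)$, i.e.
\begin{equation*}
    \bar{\psi}_t=\psi_t\circ A\quad\text{and}\quad\bar{\Phi}_t(x,v)=\Phi_t(Ax,Av).
\end{equation*}

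From these two relations the three asserted identities follow by short direct calculations. First, $A\bar{\iota}_t(x)=A(e^{\bar{\psi}_t(x)}x)=e^{\psi_t(Ax)}Ax=\iota_t(Ax)$, proving $A\bar{\iota}_t=\iota_t A$. Second, $\bar{g}_t=\bar{\iota}_t^{*}can=\bar{\iota}_t^{*}A^{*}can=(A\bar{\iota}_t)^{*}can=(\iota_t A)^{*}can=A^{*}g_t$, using that $A$ is a Euclidean isometry. Third, for $x\in\Sigma_v$ one has $Ax\in\Sigma_{Av}$, and
\begin{equation*}
    AF_v^{\bar{\Phi}_t}(x)=\cos(\Phi_t(Ax,Av))Ax+\sin(\Phi_t(Ax,Av))Av=F_{Av}^{\Phi_t}(Ax),
\end{equation*}
so $A\Sigma_v(\bar{\Phi}_t)=F_{Av}^{\Phi_t}(A\Sigma_v)=\Sigma_{Av}(\Phi_t)$. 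The only non-routine point is the intertwining $T_A\phi(f)=\phi(\bar{f})$ for the auxiliary field; once that is in hand, everything else reduces to invoking the equivariance already established for $\Gamma$.
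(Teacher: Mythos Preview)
Your proof is correct and follows essentially the same approach as the paper's own proof: invoke Lemmas \ref{O(n+1) action is compatible with tame data} and \ref{lemmequivariantLambdaV} to apply Theorem \ref{thmequivariantHamIFTquadraticerror}, verify $T_A\phi(f)=\phi(\bar f)$ via the characterizing equation in Proposition \ref{propkernel}, and then read off the three identities from $(\bar\psi_t,\bar\Phi_t)=(\psi_t\circ A,\Phi_t\circ(A\times A))$. Your presentation is in fact slightly more explicit than the paper's in spelling out the PDE verification and the final three computations.
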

\begin{proof}
    Given $f\in C^{\infty}_{odd}(\mathbb{S}^n)$, let $\phi=\phi(f)\in C^\infty_{0,\text{odd}}(T_1\S^n)$ be the unique function such that  $(f,\phi)\in\ker(D\Lambda)(0,0)$ (see Proposition \ref{propkernel}). That is, for each $v\in \mathbb{S}^n$, $\phi(f)_v$ is the unique solution of \eqref{eqdefphi(f)} that is $L^2$-orthogonal to the linear functions of $\Sigma_v$. For every $A\in O(n+1)$, it is straightforward to check that, given $\overline{f}=f A$, then $\overline{\phi}:=\phi A$ is such that $(\overline{f},\overline{\phi})\in\ker(D\Lambda)(0,0)$, that is, $\overline{\phi}=\phi(\overline{f})$. 
    
     By virtue of Lemma \ref{O(n+1) action is compatible with tame data} and Lemma \ref{lemmequivariantLambdaV}, the abstract Theorem \ref{thmequivariantHamIFTquadraticerror} can be applied to the maps $\Lambda$ and $V$. Thus, the map $\Gamma$ is equivariant with respect to the natural action of every $A\in O(n+1)$.
     
    This means that, if $(\psi_t,\Phi_t)=\Gamma(t(f,\phi(f)))$ are the maps that determine the metrics $g_t\in \mathcal{Z}$ and the Zoll families $\{\Sigma_\sigma(\Phi_t)\}$ associated to the initial data $(f,\phi)$, then 
    \begin{multline*}
    	(\overline{\psi}_t,\overline{\Phi}_t):=\Gamma(t(\overline{f},\overline{\phi}))=\Gamma(t(f,\phi)\circ A) \\
    =\Gamma(t(f,\phi))\circ A=(\psi_t,\Phi_t)\circ A=(\psi_t A,\Phi_t A).
   \end{multline*}
   
	The equality $\overline{\psi}_t=\psi_tA$ has an intuitive geometric interpretation. If we pre-compose the initial condition $f$ by $A\in O(n+1)$, then the embedding $\overline{\iota}_t$ constructed in Theorem \ref{thmA} for the initial condition $\overline{f}=fA$ is related to the embedding $\iota_t$ associated with the initial condition $f$ by $A\overline{\iota}_t =\iota_t A$. It follows that the metrics induced on $\mathbb{S}^n$ are related by $\overline{g}_{t}=A^*g_t$.
	
	The equality $\overline{\Phi}_t=\Phi_t A$ also has an intuitive meaning. In fact,  
	\begin{align*}
		A(\Sigma_{v}(\overline{\Phi}_t)) & = \{ \cos(\overline{\Phi}_t(x,v)Ax+\sin(\overline{\Phi}_t(x,v))Av\,|\,x\in \Sigma_v\} \\
		& = \{ \cos(\Phi_t(Ax,Av)Ax+\sin(\Phi_t(Ax,Av))Av\,|\,x\in \Sigma_v\} = \Sigma_{Av}(\Phi_t).
	\end{align*}
\end{proof}

	An immediate consequence is the following:

\begin{cor}\label{corfirstpartthmB}
	Under the conditions of Proposition \ref{propparathmB}, every element of
$$G_f=\{A\in O(n+1)\,|\,fA=f\}$$ 
maps $\iota_t(\mathbb{S}^n)\subset \mathbb{R}^{n+1}$ into itself, and maps members of the Zoll family of this hypersurface into members of the Zoll family. In particular, $G_f$ can be identified with a closed Lie subgroup of the isometry group of $\iota_t(\mathbb{S}^n)$.
\end{cor}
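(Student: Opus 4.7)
The plan is to apply Proposition \ref{propparathmB} in the special case where $\overline{f}=fA$ equals $f$, which is precisely the defining condition for $A$ to lie in $G_f$. In that case the construction is fed with the same initial data, so $(\overline{\psi}_t,\overline{\Phi}_t)=(\psi_t A,\Phi_t A)$ must agree with $(\psi_t,\Phi_t)$, and in particular $\overline{\iota}_t=\iota_t$. Substituting this equality into the three conclusions of Proposition \ref{propparathmB} immediately yields $A\iota_t=\iota_t A$, $A^{*}g_t=g_t$, and $A\Sigma_v(\Phi_t)=\Sigma_{Av}(\Phi_t)$ for every $v\in\mathbb{S}^n$.

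From $A\iota_t=\iota_t A$ I read off that $A(\iota_t(\mathbb{S}^n))=\iota_t(A\mathbb{S}^n)=\iota_t(\mathbb{S}^n)$, so $A$ preserves the deformed sphere set-wise. Because $A$ is a Euclidean isometry, its restriction to $\iota_t(\mathbb{S}^n)$ is an isometry of the induced metric. Moreover, applying $A$ to the image $\iota_t(\Sigma_v(\Phi_t))$ of a member of the Zoll family gives $\iota_t(A\Sigma_v(\Phi_t))=\iota_t(\Sigma_{Av}(\Phi_t))$, another member of the Zoll family parametrized by $\mathbb{RP}^n$; this proves the permutation statement.

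For the final assertion, I would first observe that $G_f$ is a closed subgroup of $O(n+1)$, being the preimage of the singleton $\{f\}$ under the continuous action $A\mapsto f\circ A$ on $C^{0}(\mathbb{S}^n)$, hence a closed Lie subgroup of $O(n+1)$ by Cartan's theorem. The restriction-to-$\iota_t(\mathbb{S}^n)$ homomorphism $G_f\to\mathrm{Isom}(\iota_t(\mathbb{S}^n),g_t)$ is continuous by construction, so only injectivity requires justification.

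Injectivity is the one step that needs a small argument, and I would consider it the main (though mild) obstacle. Two elements $A_1,A_2\in O(n+1)$ that agree on $\iota_t(\mathbb{S}^n)$ must agree on the linear span of this set in $\mathbb{R}^{n+1}$; since $\iota_t$ is a $C^{0}$-small perturbation of the standard inclusion for $|t|<\varepsilon$, the set $\iota_t(\mathbb{S}^n)$ still spans $\mathbb{R}^{n+1}$ (e.g.\ because it is a $C^{0}$-graph over $\mathbb{S}^n$ that stays in a thin annulus and therefore is not contained in any linear hyperplane). Thus $A_1=A_2$ as linear maps, and the homomorphism realizes $G_f$ as a closed Lie subgroup of $\mathrm{Isom}(\iota_t(\mathbb{S}^n),g_t)$, completing the proof.
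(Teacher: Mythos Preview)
Your proof is correct and follows essentially the same approach as the paper: apply Proposition \ref{propparathmB} in the special case $\overline{f}=f$ to obtain $A\iota_t=\iota_tA$, $A^*g_t=g_t$, and $A\iota_t(\Sigma_v(\Phi_t))=\iota_t(\Sigma_{Av}(\Phi_t))$, from which everything follows. The paper's own proof is terser, simply recording these three identities and declaring ``The result follows''; you have spelled out more, in particular the closedness of $G_f$ and the injectivity of the restriction homomorphism, which the paper leaves implicit.
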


\begin{proof}
	According to Proposition \ref{propparathmB}, in the particular case where $\overline{f}=f$, that is, if $A\in G_f$, we have
	\begin{equation*}
		A\iota_t=\iota_tA,\quad A^*g_t=g_t\quad \text{and} \quad A\iota_t(\Sigma_v(\Phi_t))=\iota_t(A\Sigma_v(\Phi_t))=\iota_t(\Sigma_{Av}(\Phi_t)).
	\end{equation*}
	The result follows.
\end{proof}

\begin{remark}\label{rmkconformalequivariance}
	As the reader will notice, the same arguments, with minor notational changes, shows that Theorem A in \cite{ACM} also has an equivariant counterpart, and therefore a statement analogous to Corollary \ref{corfirstpartthmB} holds. We will discuss this further in Appendix \ref{appequivarainceconformal}.
\end{remark}

	To finish this section, let us give another application of Theorem \ref{thmequivariantHamIFTquadraticerror}, in the context of Theorem E of \cite{ACM}, and prove Theorem \ref{thmE}.

	Given a closed manifold $M$, denote by $Sym_2(M)\rightarrow M$ the vector bundle of symmetric two-tensors on $M$. Also, if $E\rightarrow M$ is a vector bundle, we denote by $\Gamma(E)$ the set of its smooth sections. According to \cite{HamiltonNashMoser}, Part II, Corollary 1.3.9, $\Gamma(E)$ is a tame Fr\'echet space. Let us make explicit the tame data of this space for $\Gamma(Sym_2(\S^n))$.
    \begin{lem}\label{lem O(n+1) compatible with Sym2(Sn)}
        We can choose tame data for $\Gamma(Sym_2(\S^n))$ in such a way that the action of every $A\in O(n+1)$ via pull-back is compatible with the tame data. 
    \end{lem}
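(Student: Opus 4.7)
The plan is to mimic Example \ref{exam tame data C(Sn)} with scalar functions replaced by functions valued in the finite-dimensional representation space $V := Sym_2(\R^{n+1})$ of $O(n+1)$, where the representation is the standard pull-back $(\rho(A)h)(u,v) := h(Au,Av)$. Since $\rho(A)$ is orthogonal with respect to the natural inner product on $V$, all induced operators are bounded, and the scalar argument carries over at each stage.

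First, I would realize $\Gamma(Sym_2(\S^n))$ as a tame direct summand of $C^\infty(\S^n,V)$. Let $P_p:\R^{n+1}\to T_p\S^n$ denote the orthogonal projection associated with the inclusion $\S^n\subset\R^{n+1}$. Define the tame linear inclusion $\iota:\Gamma(Sym_2(\S^n))\to C^\infty(\S^n,V)$ by $\iota(h)_p(u,v):=h_p(P_p u,P_p v)$, and the tame linear retraction $\pi:C^\infty(\S^n,V)\to\Gamma(Sym_2(\S^n))$ by restricting each bilinear form at $p$ to $T_p\S^n$. Then $\pi\iota=\mathrm{Id}$, and under the natural action $(\tilde A f)(p):=\rho(A)\bigl(f(Ap)\bigr)$ on $C^\infty(\S^n,V)$, the inclusion $\iota$ intertwines the pull-back action $A^*$ on $\Gamma(Sym_2(\S^n))$ with $\tilde A$. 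This uses only that $A$ is orthogonal, so $AP_p=P_{Ap}A$, together with the chain of identities $\iota(A^*h)_p(u,v)=h_{Ap}(AP_p u,AP_p v)=\iota(h)_{Ap}(Au,Av)=(\tilde A\iota(h))_p(u,v)$.

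Next, I would endow $C^\infty(\S^n,V)$ with tame data by repeating Example \ref{exam tame data C(Sn)} in this $V$-valued setting: extend sections to $C^\infty_0(\R^{n+1},V)$ using the $O(n+1)$-invariant bump $\eta$, apply the componentwise Fourier transform to land in $L_1^\infty(\R^{n+1},dx,\log(1+|x|);V)$, and then use the same characteristic-function maps $\tilde L,\tilde M$ from \eqref{eqLtildeMtilde} to embed into $\Sigma(L_1(\R^{n+1},V))$. Because $V$ is finite-dimensional, $L_1(\R^{n+1},V)$ is a genuine Banach space and all tame estimates go through unchanged. Using the bounded action $(A\cdot g)(x):=\rho(A)(g(Ax))$ on $L_1(\R^{n+1},V)$, the same verification as in Example \ref{exam action of o(n+1) with C(Sn)} shows that bump-function extension, Fourier transform, and $\tilde L,\tilde M$ are all equivariant, so the $O(n+1)$-action is compatible with the tame data of $C^\infty(\S^n,V)$.

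Finally, applying Example \ref{Ay compatible -> Ax compatible} to the direct summand $\Gamma(Sym_2(\S^n))\hookrightarrow C^\infty(\S^n,V)$ yields tame data on $\Gamma(Sym_2(\S^n))$ compatible with the $O(n+1)$-action. No genuine obstacle arises; the only point requiring care is the intertwining identity between $A^*$ and $\tilde A$ under $\iota$, which reduces to the orthogonality of $A$ on $\R^{n+1}$.
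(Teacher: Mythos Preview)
Your proposal is correct and follows essentially the same route as the paper: both realize $\Gamma(Sym_2(\S^n))$ as an $O(n+1)$-equivariant tame direct summand of the space of $Sym_2(\R^{n+1})$-valued smooth functions on $\S^n$ (the paper writes this as $C^\infty(\S^n)\otimes S_2\cong\Gamma(i^*Sym_2(\R^{n+1}))$), and then lift the scalar tame data and compatibility from Example~\ref{exam tame data C(Sn)} componentwise. The only cosmetic difference is that the paper splits off the summand via $T\mapsto(T|_{T\S^n\times T\S^n},\,T(N,\cdot))$ whereas you use the projection/restriction pair; just remember that Definition~\ref{defn compatibility of two maps AX and Ay} requires the retraction $\pi$ to intertwine as well, which follows by the same one-line check you did for $\iota$.
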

    \begin{proof}
    Let $S_2$ be the set of bilinear and symmetric maps on $\R^{n+1}$, $S^2$ is a vector space of dimension $(n+1)(n+2)/2$. Observe that if $(B,L,M)$ is the tame data of $C^\infty(\S^n)$ (see Example \ref{exam tame data C(Sn)}), then the tensor product $C^\infty(\S^2)\otimes S_2$ has tame data $(B\otimes S_2,L\otimes Id,M\otimes Id)$.
    
    Recall that the pre-composition by $A\in O(n+1)$, say $T_A:C^\infty(\S^n)\rightarrow C^\infty(\S^n)$, is compatible with the tame data of $C^\infty(\S^n)$ (see Example \ref{exam action of o(n+1) with C(Sn)}), so let $T:B\rightarrow B$ be the map that induces the compatibility.
    A straightforward computation shows that $T_A\otimes A^*:C^\infty(\S^n)\otimes S_2\rightarrow C^\infty(\S^n)\otimes S_2$ is compatible with the tame data by considering the operator $T\otimes A^*:B\otimes S_2\rightarrow B\otimes S_2$.
    Observe that if $\map{i}{\S}{n}{\R^{n+1}}$ is the natural inclusion, then $C^\infty(\S^n)\otimes S_2$ is naturally identifiable with $\Gamma(i^*(Sym_2(\R^{n+1})))$, and the map $T_A\otimes A^*$ is simply the action of $A^*$ in this space.

    If $N$ is the outward-pointing unit normal vector of $S^n$ then it is not difficult to show that
    \begin{align*}
        \Gamma(i^*(Sym_2(\R^{n+1})))&\rightarrow\Gamma(Sym_2(\S^n))\oplus\Gamma(i^*(\Omega^1(\R^{n+1})))\\
        T&\mapsto\big(T|_{T\S^n\times T\S^n}, T(N,\cdot)\big),
    \end{align*}
    is a tame isomorphism of tame Fr\'echet spaces.
    This shows immediately that $\Gamma(Sym_2(\S^n))$ is a tame direct summand of $\Gamma(i^*(Sym_2(\R^{n+1}))$.
    Moreover, it is straightforward to verify that the action of $A$ (by pull-back) on each space is compatible (in the sense of Definition \ref{defn compatibility of two maps AX and Ay}). 
    Examples \ref{examtamefactor} and \ref{Ay compatible -> Ax compatible} give the desired tame data for $\Gamma(Sym_2(\S^n))$.
    \end{proof}

	The maps $\Lambda$ and $V$ described in Section 9.5 of \cite{ACM} are equivariant with respect to the action of every $A\in O(n+1)$, as can be checked in a straightforward fashion. Thus, applying Theorem \ref{thmequivariantHamIFTquadraticerror}, we obtain similar corollaries about the metrics in $\mathcal{Z}$ constructed by the proof of Theorem E in \cite{ACM}. In particular, if the initial condition in Theorem $E$ in \cite{ACM} is a smooth divergence-free symmetric two-tensor $h$ that has a stabilizer group $G\subseteq O(n+1)$, then the one-parameter metrics induced by this initial condition are fixed by the action of $G$, and the members of the corresponding Zoll families of minimal spheres are permuted by the action of $G$.

\begin{proof}[Proof of Theorem \ref{thmE}]
	Let $A:=-Id\in O(n+1)$. Every smooth diver\-gen\-ce-free, traceless symmetric two-tensor $h$ on $(\mathbb{RP}^n,can)$ lifts to a smooth di\-ver\-gen\-ce-free, traceless symmetric two-tensor $\overline{h}$ on $(\mathbb{S}^n,can)$ such that $A^*\overline{h}=\overline{h}$. By the equivariance of the map $\Gamma$ constructed in the proof of Theorem E of \cite{ACM},  we conclude that the metrics $\overline{g}_t\in \mathcal{Z}$ constructed therein satisfy $A^*\overline{g}_t=\overline{g}_t$ (as long as we use the tame data described in Lemma \ref{lem O(n+1) compatible with Sym2(Sn)} for $\Gamma(Sym_2(\S^n))$). Moreover, similar to how it was argued in Proposition \ref{propparathmB}, for every member $\overline{\Sigma}^t_{[v]}$, $[v]\in \mathbb{RP}^n$, of the Zoll family of $(\mathbb{S}^n,\overline{g}_t)$, one can check that 	
	\begin{equation*}
		A(\overline{\Sigma}^t_{[v]})=\overline{\Sigma}^{t}_{[Av]}=\overline{\Sigma}^{t}_{[-v]}=\overline{\Sigma}^t_{[v]}.
	\end{equation*} 
	Passing to the quotient, we obtain a smooth one-parameter family of metrics $g_t$ on $\mathbb{RP}^n$, with $\dot{g}_t=can+ht+o(t)$ as $t$ goes to zero, and, for each parameter $t$, a Zoll family of embedded minimal real projective hyperplanes $\Sigma^t_\sigma:=\overline{\Sigma}_\sigma^t/\{\pm Id\}$ in $(\mathbb{RP}^n,g_t)$, parametrized by $\mathbb{RP}^n$.
\end{proof}

\begin{remark}\label{rmkkilling}
	Let $\mathcal{K}\subset \mathcal{Z}$ be the space of metrics on $\mathbb{RP}^n$ with minimal linear real projective hyperplanes, and $\mathcal{K}_0\subset\mathcal{K}$ be the subset consisting of those metrics with the same volume as $can$. Let $\mathcal{K}_2$ be the space of Killing symmetric two-tensors in $(\mathbb{RP}^n,can)$, and $\mathcal{K}_{2,n}\subset \mathcal{K}_2$ be the subset consisting of those with $tr_{can}k=n$. A symmetric two-tensor $k$ belongs to $\mathcal{K}_2$ if and only if $\nabla^{can}k(X,X,X)=0$ for every tangent vector field $X$. As a consequence, $2div_{can}k+dtr_{can}k=0$ for all $k\in \mathcal{K}_2$, and every $k\in \mathcal{K}_{2,n}$ is divergence-free.
	
	 Differentiating the correspondence $k\in \{k\in\mathcal{K}_2\,|\,k>0\}\mapsto g_k\in \mathcal{K}$ described in \cite{ACM}, Proposition 9.8, along the path $(1-t)can+tk\in \mathcal{K}_{2,n}$, for arbitrary $k\in \mathcal{K}_{2,n}$, one checks that the tangent space of $\mathcal{K}_0$ at $can$ is the set of divergence-free traceless symmetric two-tensors of the form $h=k-can$, $k\in \mathcal{K}_{2,n}$. It is also straightforward to compute that, for every $k\in \mathcal{K}_2$,
	\begin{equation*}
		\Delta_{can} k + 2n\left(k - \frac{tr_{can}k}{n}can\right) = Hess_{can}(tr_{can}k),
	\end{equation*}
so that the tensors $h=k-can\in T_{can}\mathcal{K}_0$ with $k\in \mathcal{K}_{2,n}$ lie in an eigenspace of the (rough) Laplace operator $\Delta_{can}$ acting on symmetric two-tensors of $(\mathbb{RP}^n,can)$. 
	
	Thus, the infinitesimal directions of variations in $\mathcal{Z}$ corresponding to variations within $\mathcal{K}_0$ that start at $can$ constitute a finite dimensional space. Any non-trivial divergence-free, traceless symmetric two-tensor $h$ on $(\mathbb{RP}^n,g)$ that is $L^2$-orthogonal to $T_{can}\mathcal{K}_0$ (say, any other eigentensor of the Laplace operator in the space of divergence-free traceless symmetric two tensors, see  \cite{BoucettaSpectredesLaplacien}) generates metrics in $\mathcal{Z}\setminus \mathcal{K}$ via Theorem \ref{thmE}. Actually, even more is true: by Ebin's slice theorem, which constructs a slice for the action of the diffeomorphism group on the space of Riemannian metrics near $can$ whose tangent space at $can$ is precisely the space of divergence-free symmetric two-tensors with respect to $can$, the $L^2$-orthogonality of $h$ to $T_{can}\mathcal{K}_0$ implies that metrics $g_t$ generated by such $h$ are not isometric to any metric on $\mathcal{K}$ (see \cite{BerEbi}, Section 3).
\end{remark}

\subsection{Computation of the isometry group} \label{subseccomputation} According to Corollary \ref{corfirstpartthmB}, for any $f\in C^{\infty}_{odd}(\mathbb{S}^n)$, every element $A$ of the stabilizer group 
$$G_f:=\{R\in O(n+1):fR=f\},$$
maps $\iota_t(\mathbb{S}^n)$ into $\iota_t(\mathbb{S}^n)$. Since $A$ is an isometry of $\mathbb{R}^{n+1}$, the restriction of $A$ to $\iota_t(\mathbb{S}^n)$ is an isometry of $\iota_t(\mathbb{S}^n)$ with respect to the induced metric of $\mathbb{R}^{n+1}$. 

	Denote by $G_t$ the isometry group of $\iota_t(\mathbb{S}^n)$. While we have seen that $G_f\subseteq G_t$  (here, we do a minor abuse of notation and identify $A\in G_f$ with its restriction to $\iota_t(\mathbb{S}^n)$), our next aim is to prove that $G_f=G_t$ for every sufficiently small $t$, as soon as $f$ is $L^2$-orthogonal to the linear functions. This is what remains to prove Theorem \ref{thmB}.

	It is useful to keep in mind another identification. The radial perturbation $\iota_t: \mathbb{S}^n\rightarrow \iota_t(\mathbb{S}^n)$ induces an abstract metric $g_t$ on $\S^n$. Sometimes, it is convenient to consider the group $\hat{G}_t:=Iso(\mathbb{S}^n,g_t)$, which is conjugate to $\hat{G}_t$, with $G_t=\iota_t \hat{G}_t \iota_t^{-1}$. Since $A=\iota_t^{-1}A\iota_t$ (see Corollary \eqref{corfirstpartthmB}), also abusing notation, we will sometimes think of $G_f$ as a subgroup of $\hat{G}_t$.

	The first observation is that every intrinsic isometry of $\iota_t(\mathbb{S}^n)$ (and not only those in $G_f$) is the restriction of an extrinsic isometry of $\mathbb{R}^{n+1}$. To prove this, we will use rigidity results for ovaloids in  submanifold theory. To be more precise, if $n = 2$ we use the Cohn-Vossen Theorem [12], and if $n >3$ we use the Sacksteder Theorem [28] (for a modern proof see [13], Theorem 13.2). We observe that the first result requires positive Gaussian curvature, and the second applies if the Ricci curvature is positive.
	
\begin{lem}\label{lemintrinsicareextrinsic}
	Assume that $t$ is sufficiently small. For every $T\in G_t$ there exists $\tilde{T}\in Iso(\mathbb{R}^{n+1})$ with $\tilde{T}|_{\iota_t(\S^n)}=T$.
\end{lem}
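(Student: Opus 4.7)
The plan is to exploit rigidity of convex (positively curved) hypersurfaces in Euclidean space. Since $\iota_0(\mathbb{S}^n)=\mathbb{S}^n$ is strictly convex in $\mathbb{R}^{n+1}$, for $t$ small the embedded hypersurface $\iota_t(\mathbb{S}^n)$ remains strictly convex, because strict convexity is an open condition in the $C^2$-topology of embeddings, and the family $t\mapsto \iota_t$ is smooth with $(\psi_t,\Phi_t)=\Gamma(t(f,\phi(f)))$ starting at $(0,0)$. In particular, $(\mathbb{S}^n,g_t)$ has strictly positive sectional curvature.

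Given $T\in G_t$, let $\widehat{T}=\iota_t^{-1}\circ T\circ \iota_t\in \operatorname{Iso}(\mathbb{S}^n,g_t)$. The idea is to consider the two isometric immersions
\begin{equation*}
    \iota_t,\ \iota_t\circ\widehat{T}:(\mathbb{S}^n,g_t)\rightarrow\mathbb{R}^{n+1}.
\end{equation*}
Both pull back the flat metric of $\mathbb{R}^{n+1}$ to $g_t$. By the classical rigidity theorem for closed convex hypersurfaces in Euclidean space (Cohn–Vossen in dimension $n=2$; for $n\geq 3$, Sacksteder's theorem, or simply the Beez–Killing theorem which applies since strict convexity gives type number $n\geq 3$), any two isometric immersions of a closed, strictly convex hypersurface into $\mathbb{R}^{n+1}$ differ by an ambient Euclidean isometry. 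Hence there exists $\widetilde{T}\in\operatorname{Iso}(\mathbb{R}^{n+1})$ such that
\begin{equation*}
	\iota_t\circ\widehat{T}=\widetilde{T}\circ\iota_t.
\end{equation*}
Restricting this identity to $\mathbb{S}^n$ and then evaluating on $\iota_t(\mathbb{S}^n)$ shows $\widetilde{T}|_{\iota_t(\mathbb{S}^n)}=T$, which is exactly the claim.

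The main obstacle is to locate a version of the rigidity result applicable in all dimensions $n\geq 2$. In dimension $n=2$ we invoke Cohn–Vossen's rigidity theorem for closed convex surfaces; for $n\geq 3$, strict convexity of $\iota_t(\mathbb{S}^n)$ means all principal curvatures are nonzero with the same sign, so the type number equals $n\geq 3$, and the Beez–Killing rigidity theorem for hypersurfaces with type number at least three gives the required uniqueness of the isometric immersion modulo $\operatorname{Iso}(\mathbb{R}^{n+1})$. A small technical point is that Beez–Killing is an infinitesimal/local rigidity statement, so to get the global conclusion one either promotes it to a global statement via connectedness of $\mathbb{S}^n$ (standard, since the congruence obtained locally is constant along connected manifolds of constant type number) or quotes Sacksteder's global rigidity theorem for complete convex hypersurfaces directly. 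Either route is a well-documented classical fact, and once invoked the proof of the lemma concludes in the single line above.
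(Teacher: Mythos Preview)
Your proof is correct and follows essentially the same approach as the paper: both consider the two isometric immersions $\iota_t$ and $\iota_t\circ\widehat{T}$ of the positively curved sphere $(\mathbb{S}^n,g_t)$ into $\mathbb{R}^{n+1}$, invoke Cohn--Vossen rigidity for $n=2$, and a higher-dimensional rigidity theorem for $n\geq 3$ (the paper cites the rigidity of compact hypersurfaces with positive sectional curvature from Dajczer--Tojeiro, while you appeal to Beez--Killing/Sacksteder, which amounts to the same thing in this strictly convex setting).
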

\begin{proof}
    Let $\hat{j}=\hat{j}_t$ be an intrinsic isometry of $(\S^n,g_t)=\hat{G}_t$. Then, we have two isometric immersions of $(\S^n,g_t)$ into $\mathbb{R}^{n+1}$, namely, $\iota_t$ and $\iota_t\circ \hat{j}$.
    We claim that, as soon as $t$ is sufficiently small, there exists $\tilde{T}=\tilde{T}_t\in\text{Iso}(\R^{n+1})$ such that $\tilde{T}\circ\iota_t=\iota_t\circ \hat{j}$.
    Indeed, for $n=2$, the claim is a consequence of the Cohn-Vossen rigidity theorem for ovaloids, since $\iota_t(\mathbb{S}^2)$ is an embedded surface with positive Gaussian curvature for $t$ small enough. Similarly, in dimensions $n>2$, the claim follows from Sacksteder result applied to embedded hypersurfaces with positive Ricci curvature, which is the case for $t$ small enough. Since every element of $G_t$ is of the form $\iota_t \hat{j}\iota_t^{-1}$ for some $\hat{j}\in \hat{G}_t$, the conclusion follows.
\end{proof}

	Lemma \ref{lemintrinsicareextrinsic} implies that, as soon as $t$ is sufficiently small, we can do another identification and regard $G_t$ as the set of all isometries $T$ of $\mathbb{R}^{n+1}$ that map $\iota_t(\mathbb{S}^n)$ into itself. Thus, for every $T\in G_t$ there exist uniquely determined $A\in O(n+1)$ and $v\in \mathbb{R}^{n+1}$ such that $Tx=Ax+v$ for every $x\in \mathbb{R}^{n+1}$.
	
	Let $C^\infty_{odd,0}(\S^n)\subseteq C^\infty_{odd}(\S^n)$ be the subset of smooth odd maps of $\S^n$ that are $L^2$-orthogonal to all linear maps. We continue our analysis assuming that $f\in C^\infty_{odd,0}(\S^n)$. 

\begin{lem}\label{lemma f=fA}
    Let $f\in C^\infty_{odd,0}(\S^n)$. Given $t_k\rightarrow 0$, let $T_k\in G_{t_k}\subseteq \text{Iso}(\R^{n+1})$ be written as
    $$T_k(x)=A_k(x)+v_k$$
    for $A_k\in O(n+1)$ and $v_k\in \mathbb{R}^{n+1}$. If $\lim_k A_k=A\in O(n+1)$ exists, then $\lim_k v_k=0$ and $\lim T_k = A\in G_f$.
\end{lem}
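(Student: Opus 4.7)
The plan is to exploit the identity
\[
T_k\big(\iota_{t_k}(x)\big) = A_k\,\iota_{t_k}(x) + v_k = \iota_{t_k}\big(\hat{T}_k(x)\big),
\]
where $\hat{T}_k := \iota_{t_k}^{-1} \circ T_k \circ \iota_{t_k}$ is the intrinsic isometry of $(\S^n, g_{t_k})$ corresponding to $T_k$ under $\iota_{t_k}$. Taking squared Euclidean norms and using $|\iota_{t_k}(y)|^2 = e^{2\psi_{t_k}(y)}$ yields
\[
e^{2\psi_{t_k}(\hat{T}_k(x))} - e^{2\psi_{t_k}(x)} = 2\,\langle A_k\,\iota_{t_k}(x), v_k\rangle + |v_k|^2
\]
for every $x\in\S^n$. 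Since $\|\psi_{t_k}\|_{C^0}=O(t_k)$ and $\iota_{t_k}(\S^n)$ is $O(t_k)$-close to $\S^n$, the left-hand side is $O(t_k)$ uniformly in $x$; evaluating the identity at a point $x_k$ for which $A_k\,\iota_{t_k}(x_k)$ is (nearly) parallel to $v_k$ gives $2|v_k|(1+O(t_k)) + |v_k|^2 = O(t_k)$, and hence the a priori bound $|v_k|=O(t_k)$.

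Knowing that $|v_k|/t_k$ is bounded, I would pass to a subsequence along which $v_k/t_k \to w \in \R^{n+1}$. Because $T_k \to A$ uniformly on compact sets of $\R^{n+1}$ and $\iota_{t_k}$ tends uniformly to the identity of $\S^n$, one also has $\hat{T}_k(x) \to Ax$ uniformly on $\S^n$. Substituting $\psi_{t_k} = t_k f + o(t_k)$ in the norm identity, dividing by $2t_k$ and letting $k\to\infty$, yields the limit equation
\[
f(Ax) - f(x) = \langle Ax, w\rangle \qquad \text{for every } x\in\S^n.
\]

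The final step is to invoke the hypothesis $f\in C^\infty_{odd,0}(\S^n)$. Testing the limit equation against the coordinate function $\langle x, e_j\rangle$ and integrating with respect to $dA_{can}$, the left-hand side vanishes: the term $\int f(x)\langle x, e_j\rangle\,dA_{can}$ is zero because $f$ is $L^2$-orthogonal to linear functions, while the term $\int f(Ax)\langle x, e_j\rangle\,dA_{can}$ reduces to $\int f(y)\langle y, Ae_j\rangle\,dA_{can}$ after the change of variables $y=Ax$ and likewise vanishes. The right-hand side equals $c\,\langle w, Ae_j\rangle$ for some $c>0$ independent of $j$; since $\{Ae_j\}_{j=0}^{n}$ is a basis of $\R^{n+1}$, we conclude $w=0$. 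As this holds for every subsequential limit, $v_k=o(t_k)$, so $v_k\to 0$; the limit equation reduces to $f\circ A = f$, i.e.\ $A\in G_f$; and $T_k\to A$ follows from $A_k\to A$ and $v_k\to 0$. The delicate point is securing the sharp bound $|v_k|=O(t_k)$: without it, the quadratic contribution $|v_k|^2/t_k$ would overwhelm the limit equation and obscure the linear information on $w$ that the orthogonality hypothesis is meant to detect.
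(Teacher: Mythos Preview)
Your proof is correct and takes a genuinely different route from the paper's.

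The paper proceeds in two decoupled steps. First, it obtains $v_k\to 0$ by a soft compactness argument: every subsequential limit $T$ of $T_k$ must map $\S^n$ to itself, hence lies in $O(n+1)$, forcing the translational part to vanish. Second, to show $A\in G_f$, it exploits that extrinsic isometries preserve the mean curvature $H_k$ of $\iota_{t_k}(\S^n)$; from the expansion $(H_k - n)/t_k \to -\Delta f - nf$ (Lemma~\ref{applemvariationAH}) and a Taylor argument controlling $(H_k(\hat T_k(x)) - H_k(Ax))/t_k$, it deduces that $f - f\circ A$ lies in $\ker(\Delta + n)$, i.e.\ is a linear function, which orthogonality then kills.

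Your approach replaces the second-order invariant $H_k$ by the zeroth-order radial function $|\iota_{t_k}(\cdot)|^2 = e^{2\psi_{t_k}}$. The norm identity packages both conclusions at once: evaluating it at a well-chosen point gives the sharp bound $|v_k|=O(t_k)$ (stronger than the paper's $v_k\to 0$), and the rescaled limit $f(Ax)-f(x)=\langle Ax,w\rangle$ simultaneously exhibits $f - f\circ A$ as linear and identifies the obstruction $w$. Orthogonality to linear functions then forces $w=0$ and $f\circ A = f$ in one stroke, and you even get the bonus $v_k = o(t_k)$.

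Your argument is more elementary---it avoids the mean curvature formula entirely---and the logic is cleaner. The paper's route, however, is more portable: it transfers verbatim to the conformally flat deformations of Theorem~A in \cite{ACM} (see Appendix~\ref{appequivarainceconformal}), where the embedding lands in the light cone of $\R^{n+2}_1$ and your norm identity degenerates to $0=0$, while the scalar curvature plays the role of $H_k$ with the same first-order expansion $-\Delta f - nf$ (up to a constant).
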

\begin{proof}
	Write $\S_k:=\iota_{t_k}(\S^n)\subseteq\R^{n+1}$. We may assume that all $\S_k$ lie within a closed ball of radius $2$ centered at the origin in $\mathbb{R}^{n+1}$. Hence, the sequence $\{v_k\}$ is bounded in $\R^{n+1}$, and therefore every subsequence has a converging subsequence $\lim v_{t_m}=v\in \mathbb{R}^{n+1}$. We claim that $v=0$. In fact, by the hypothesis on $A_k$, the maps $T_{k_m}: x\mapsto A_{k_m}x+v_{k_m}$ converge to the map $T: x\mapsto Tx=Ax+v\in Iso(\mathbb{R}^{n+1})$. Since $T_{k_m}(\S_{k_m})=\S_{k_m}$, by passing to the limit we conclude that $T$ maps $\mathbb{S}^n$ onto $\mathbb{S}^n$, so that it is an element of $O(n+1)$. Therefore $v=0$, as claimed. This shows that every subsequence of $\{v_k\}$ has a subsequence converging to zero. It follows that $\lim v_k=0$.

	Therefore, $T_k\in G_{t_k}$ converges to $A\in O(n+1)$. It remains to prove that $A\in G_f$. 
	
	Let $H_k:\S^n\rightarrow\R$ be the mean curvature function of $\S_k$, that is, $H_k(x)$ is the mean curvature of $\mathbb{S}_k$ at the point $\iota_{t_k}(x)$. By Lemma \ref{applemvariationAH} of the Appendix, we have
    \begin{equation}\label{eqmiddleofclaim}
    	\lim_{k\to\infty}\Big(\frac{H_k(x)-n}{t_k}\Big) = -\Delta f(x)-nf(x), \quad\forall x\in\S^n. 
    \end{equation}
    Let $\hat{T}_k=\iota_{t_k}^{-1}T_k\iota_{t_k}\in \hat{G}_t$. By virtue of Lemma \ref{lemintrinsicareextrinsic}, the mean curvature $H_k(x)$ coincides with the mean curvature $H_k(\hat{T}_k(x))$, for every $x\in \mathbb{S}^n$. 
    
    \hspace{5mm}

\noindent \textbf{Claim}: For every $x\in \mathbb{S}^n$, $\lim\Big(\frac{H_k(\hat{T}_k(x))-H_k(A(x))}{t_k}\Big)=0$. 

\hspace{5mm}

	Assuming the claim, and using \eqref{eqmiddleofclaim} at the points $x$, $A(x)\in \mathbb{S}^n$, we obtain
    \begin{align*}
        -\Delta f(x)-nf&(x)=\lim_{k\to\infty}\Big(\frac{H_k(\hat{T}_k(x))-n}{t_k}\Big)\\
        &=\lim_{k\to\infty}\Big(\frac{H_k(\hat{T}_k(x))-H_k(A(x)) + H_k(A(x))-n}{t_k}\Big)\\
        &=
        -\Delta f(A(x))-nf(A(X))= -\Delta f^A(x)-nf^A(x),
    \end{align*}
    where $f^A=f\circ A$ (here we used that $A$ is an isometry of $(\mathbb{S}^n,can)$). Since $x$ is arbitrary, it follows that $(f-f^A)$ belongs to the kernel of $\Delta+n$ on $(\mathbb{S}^n,can)$, that is, the set of linear functions restricted to $\mathbb{S}^n$. This set and its $L^2$-orthogonal complement are invariant by the action of $O(n+1)$. Since, by hypothesis, $f$ is $L^2$-orthogonal to all linear functions, we conclude that $f^A$ has the same property. But then $f-f^A$ must vanish identically. In other words, $A\in G_f$, as we wanted to show.

    Let us now prove the Claim. Given $x\in \mathbb{S}^n$, denote by $d_k$ the distance in $(\mathbb{S}^n,can)$ between $A(x)$ and $\hat{T}_k(x)$. Clearly, $\lim d_k=0$.    Consider the normalized geodesic $\gamma(s)=\gamma_k(s)$ of $(\mathbb{S}^n,can)$ with $\gamma(0)=A(x)$ and $\gamma(d_k)=\hat{T}_k(x)$. 
    
    Let $H(t,x)=H_t(x)$ be the smooth function that computes the mean curvature of $\iota_t(\mathbb{S}^n)$ at the point $x$. Since the mean curvature of $\mathbb{S}^n=\iota_0(\S^n)$ is constant and equal to $n$, the Taylor expansion of the function $h(t,s)=H(t,\gamma(s))$ centered at $(0,0)$ up to second-order gives
    \begin{align*}
        H_{t_k}(\hat{T}_k(x))&=h(t_k,d_k) \\
        &=n-\big((\Delta f)(Ax)+nf(Ax)\big)t_k +\frac{1}{2}\partial_{t}\partial_t h(\theta t_k,\theta d_k)t^2_k\\
        &\quad+\partial_{t}\partial_s h(\theta t_k,\theta d_k)t_kd_k+\frac{1}{2}\partial_{s}\partial_s h(\theta t_k,\theta d_k)d^2_k,
    \end{align*}
    for some $\theta=\theta_k\in (0,1)$. Moreover, as $\partial_s\partial_s h(0,s)\equiv 0$, there exist constants $C$ and $\varepsilon>0$ such that
    \begin{equation}\label{ddH<Ct}
        |\partial_{s}\partial_s h(t,s)|\leq C|t|\quad\forall x\in\S^n,\quad \forall t\in[-\varepsilon,\varepsilon].
    \end{equation}
    Hence, as $k\rightarrow \infty$, $(t_k,d_k)\rightarrow (0,0)$ and 
    \begin{equation*}
    	 H_{t_k}(\hat{T}_k(x))=n-\big((\Delta f)(Ax)+nf(Ax)\big)t_k+o(t_k).
    \end{equation*}
    Similarly, the Taylor expansion at $0$ for the function $g(t)=H(t,Ax)$ gives
    $$H_{t_k}(Ax)=g(t_k)=n-\big((\Delta f)(Ax)+nf(Ax)\big)t_k+o(t_k)$$
    when $k\rightarrow \infty$. By comparing the two expansions, the claim is proven.
    \end{proof}

We choose, once and for all, an inner product in $Lie(Iso(\mathbb{R}^{n+1}))$, and denote by $\exp$ the exponential map of $Lie(Iso(\mathbb{R}^{n+1}))$ into $Iso(\mathbb{R}^{n+1})$. The next Lemma shows that the connected components of $G_f$ and $G_t$ are equal, as soon as $t$ is sufficiently small.

\begin{lem}\label{lemg=gt}
    If $f\in C^\infty_{odd,0}(\S^n)$ then $Lie(G_f)=Lie(G_t)$ for any sufficiently small $t\neq0$.
\end{lem}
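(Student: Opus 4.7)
The plan is to establish the two inclusions separately. One direction is automatic: since $G_f \subseteq G_t$ by Corollary \ref{corfirstpartthmB}, and both are closed Lie subgroups of $\text{Iso}(\R^{n+1})$ (using Lemma \ref{lemintrinsicareextrinsic} to realize $G_t$ inside $\text{Iso}(\R^{n+1})$), one has $Lie(G_f) \subseteq Lie(G_t)$ for every $t$ small enough. The content of the lemma is the reverse inclusion for $t$ small, which I would prove by contradiction.

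Suppose there exist $t_k \to 0$ and $X_k \in Lie(G_{t_k})$ with $X_k \notin Lie(G_f)$. Using the fixed inner product on $Lie(\text{Iso}(\R^{n+1}))$, decompose $X_k = Y_k + Z_k$ with $Y_k$ the orthogonal projection of $X_k$ onto $Lie(G_f)$ and $Z_k \in Lie(G_f)^{\perp}$. Then $Z_k \neq 0$, and the a priori inclusion $Lie(G_f) \subseteq Lie(G_{t_k})$ gives $Z_k = X_k - Y_k \in Lie(G_{t_k})$ as well. Subtracting the projection onto $Lie(G_f)$ is precisely what ensures that the residual direction is simultaneously nonzero and still a member of $Lie(G_{t_k})$. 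Normalize $\tilde Z_k := Z_k / \|Z_k\|$ and, by compactness of the unit sphere, pass to a subsequence with $\tilde Z_k \to \tilde Z$ for some unit vector $\tilde Z \in Lie(G_f)^{\perp}$.

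The main step is to feed this direction into Lemma \ref{lemma f=fA}, applied to the Euclidean motions $T_k(s) := \exp(s\tilde Z_k) \in G_{t_k}$, one value of $s \in \R$ at a time. Identifying $Lie(\text{Iso}(\R^{n+1}))$ with pairs $(M,b)$, $M$ skew-symmetric and $b \in \R^{n+1}$, and writing $\tilde Z_k = (M_k, b_k) \to (M,b) = \tilde Z$, one has
\begin{equation*}
T_k(s)(x) = e^{sM_k}x + \int_0^s e^{rM_k} b_k \, dr,
\end{equation*}
whose linear part $e^{sM_k}$ converges in $O(n+1)$ to $e^{sM}$. Lemma \ref{lemma f=fA} then forces the translational parts to go to zero and gives $e^{sM}\in G_f$ for every $s$. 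Passing to the limit yields $\int_0^s e^{rM}b\,dr = 0$ for all $s$, whence $b=0$ by differentiation at $s=0$; and $e^{sM}\in G_f$ for all $s$ means $M\in Lie(G_f)$ when $G_f$ is viewed inside $O(n+1)$. Thus $\tilde Z = (M,0) \in Lie(G_f)$, contradicting $\tilde Z \in Lie(G_f)^{\perp}$ with $\|\tilde Z\|=1$. The technical heart of the argument is already Lemma \ref{lemma f=fA}; this plan merely packages that pointwise-in-$s$ information to constrain one-parameter subgroups rather than isolated isometries, and the key choice that makes it work is passing to the orthogonal complement of $Lie(G_f)$ in order to produce a normalizable, nonzero test direction still inside $Lie(G_{t_k})$.
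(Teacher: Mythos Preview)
Your proposal is correct and follows essentially the same route as the paper: both argue by contradiction, extract a unit vector in $Lie(G_{t_k})\cap Lie(G_f)^{\perp}$, pass to a subsequential limit, and then apply Lemma~\ref{lemma f=fA} to each $\exp(s\,\cdot\,)$ to force the limiting direction into $Lie(G_f)$. Your only additional wrinkle is the explicit affine bookkeeping showing $b=0$; the paper bypasses this by noting directly that $\exp(sw)\in G_f$ for all $s$ already implies $w\in Lie(G_f)$.
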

\begin{proof}
    By Corollary \ref{corfirstpartthmB}, we have $G_f\subseteq G_t$, so that $Lie(G_f)\subseteq Lie(G_t)\subseteq Lie(Iso(\mathbb{R}^{n+1}))$ for all $t$. 
    
    Suppose, by contradiction, that there exists a sequence $t_k\rightarrow 0$ such that $Lie(G_f)\subsetneq Lie(G_{t_k})$  for every $k$. Then there exists $w_k\in Lie(G_k)\cap Lie(G_f)^\perp$ with $|w_k|=1$. After passing to a subsequence, we may assume that $w_k$ converges to a unit vector $w\in Lie(G_f)^\perp$. Fix $s\in \mathbb{R}$. The sequence $T_k=\exp(sw_k)\in G_k$ is a sequence of isometries of $\mathbb{R}^{n+1}$ mapping $\mathbb{S}_k$ into itself that converges to $A=\exp(sw)$, an isometry of $\mathbb{R}^{n+1}$ that maps $\mathbb{S}^n$ into itself. Thus $A\in O(n+1)$. By Lemma \ref{lemma f=fA}, possibly after passing to a further subsequence, we conclude that $\exp(sw)=A=\lim T_k\in G_f$. Since $s$ is arbitrary, $w\in Lie(G_f)$, a contradiction.
\end{proof}

Combining Lemmas \ref{lemintrinsicareextrinsic}, \ref{lemma f=fA} and \ref{lemg=gt}, we prove Theorem \ref{thmB}.
\begin{proof}[Proof of Theorem \ref{thmB}] Assume, by contradiction, that there exists a sequence $t_k\rightarrow 0$ with $G_k:=G_{t_k}\neq G_f$. By Lemma \ref{lemintrinsicareextrinsic}, since $G_f\subseteq G_k\subset Isom(\mathbb{R}^{n+1})$ for every $k$, in this case there exists $T_k\in G_k\setminus G_f$ of the form $T_k(x)=A_k(x)+v_k$, where $A_k\in O(n+1)$ and $v_k\in \mathbb{R}^{n+1}$. 
    
    After passing to a subsequence, we may assume that $\lim A_k=A\in O(n+1)$. By Lemma \ref{lemma f=fA}, we conclude that $\lim v_k=0$ and $\lim T_k=A\in G_f\subseteq G_k$. Thus, and since $T_k\in G_k\setminus G_f$, we have that $A^{-1}T_k\in G_k\setminus G_f$ forms a sequence that converges to the identity map. In other words, by changing $T_k$ for $A^{-1}T_k$, we can assume that our sequence $T_k$ is such that $\lim A_k=\text{Id}$ and $\lim v_k=0$.
    
    We claim that $A_k\notin G_f$ for all $k$. In fact, if not, then the composition $T_k A_k^{-1}$ (which is just the translation by $v_k$) is also an element of $G_k$. Since $G_k$ consists of maps that preserve a compact hypersurface of $\mathbb{R}^{n+1}$, this is possible only if $v_k=0$. Otherwise, the iterates would eventually move the hypersurface outside the ball that contains it. But in this case $T_k=A_k\in G_f$, a contradiction. This proves the claim.

    Consider the decomposition $Lie(SO(n+1))=Lie(G_f)\oplus Lie(G_f)^{\perp}$, with respect to the chosen inner product on $Lie(Iso(\mathbb{R}^{n+1}))$. Recall that $\exp$ denotes the exponential map of $Lie(Iso(\mathbb{R}^{n+1}))$ (which is a matrix group when seen inside the affine group). We have $\exp(Lie(SO(n+1)))=SO(n+1)$. The map
    \begin{align*}
        F:Lie(G_f)\times Lie(G_f)^{\perp}&\rightarrow SO(n+1)\\
        (u,w)&\mapsto \exp(u)\exp(w),
    \end{align*}
    has derivative equal to the identity at $(0,0)$, and therefore is a local diffeomorphism between a neighborhood of $(0,0)$ and a neighborhood of $F(0,0)=Id$. Since $\lim A_k=Id$, $A_k$ eventually lies in this small neighborhood.
    
    By Lemma \ref{lemg=gt}, $Lie(G_f)=Lie(G_k)$ for sufficiently large $k$. It follows that, for all sufficiently large $k$, we have $A_k=g_k\exp(s_kw_k)$, where $g_k\in G_f\cap SO(n+1) \subset G_k\cap SO(n+1)$ lies in a sufficiently small neighborhood of $Id$, $w_k\in Lie(G_f)^{\perp}$ is a unit vector, $s_k\neq 0$ is such that $s_kw_k$ lies in a sufficiently small neighborhood of zero in $Lie(G_f)^{\perp}$, and $\exp(s_k\omega_k)\in SO(n+1)\setminus G_f$ . Passing to a subsequence, we may assume that $g_k$ converges in the compact group $G_f$, the unit vectors $w_k$ converge to a unit $w\in Lie(G_f)^{\perp}$, and the small numbers $s_k$ converge to some number. In fact, since $A_k\rightarrow Id$, we have $s_k\rightarrow 0$. 
    
    Let $\overline{T}_k=g_k^{-1}T_k\in G_k$, $\overline{A}_k=\exp(s_kw_k)$ and $\overline{v}_k=g_k^{-1}v_k$. Since $\overline{A}_k\rightarrow Id$, by Lemma \ref{lemma f=fA}, $\overline{v}_k\rightarrow 0$ and $\lim\overline{T}_k=\lim\overline{A}_k=Id$. Notice again that $\overline{A}_k\notin G_f$ for any $k$ sufficiently large. 

    Given any $t\in\R$, let $n_k\in\Z$ be a sequence such that $\lim n_ks_k=t$. For the iterates $\overline{T}_k^{n_k}\in G_k$, we can find $\tilde{v}_{n_k}\in \R^{n+1}$ such that
    $$\overline{T}_k^{n_k}(x)=\overline{A}_k^{n_k}(x)+\tilde{v}_{n_k}=\exp(n_ks_kw_k)x+\tilde{v}_{n_k}.$$
    Since $\overline{T}_k^{n_k}\in G_k$ and
    $$\lim\overline{A}_k^{n_k}=\lim \exp(n_ks_kw_k)=\exp(tw),$$
    we conclude, again by Lemma \ref{lemma f=fA}, that $\tilde{v}_{n_k}\rightarrow 0$ and $\exp(tw)=\lim \overline{T}^{n_k}_k \in G_f$. Since $t$ is arbitrary, differentiation along the path $t\mapsto \exp(tw)$ at zero gives $w\in Lie(G_f)$. And this is the final contradiction.
\end{proof}

	To finish this section, we prove Theorem \ref{thmC}.
		
	\begin{proof}[Proof of Theorem \ref{thmC}]
	Let $f$ be any smooth non-zero odd function on $\mathbb{S}^n$, orthogonal to the linear functions, that depends only on the last coordinate $x_{n+1}$ (for instance, a zonal spherical harmonic of odd degree $>1$). Then $G_f$ contains a copy of $O(n)$, consisting of maps of the form $(x',x_{n+1})\mapsto (Rx',x_{n+1})$, $R\in O(n)$. In this case, Lemma \ref{O(n)subset G then O(n)=G} below shows that $G_f=O(n)$ since $f$ is non-constant.
	
	By Theorem \ref{thmB}, $f$ generates a smooth one-parameter family of embeddings of $\mathbb{S}^n$ in $\mathbb{R}^{n+1}$, each invariant by the above horizontal action of $O(n)$, and each containing a Zoll family of codimension one embedded minimal spheres $\{\Sigma_\sigma\}$, $\sigma\in \mathbb{RP}^n$, such that $A(\Sigma_{[v]})=\Sigma_{[Av]}$ for every $[v]\in \mathbb{RP}^n$ and every $A\in O(n)$.
	\end{proof}

    \begin{lem}\label{O(n)subset G then O(n)=G}
        Let $G\subseteq O(n+1)$ be a closed subgroup with $-Id\notin G$ and $O(n)\subseteq G$, where $O(n)\subseteq O(n+1)$ acts trivially in the last canonical vector $e_{n+1}$ and naturally in its orthogonal complement. Then $G=O(n)$.
    \end{lem}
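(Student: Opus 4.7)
The plan is to analyze the Lie algebra $\mathfrak{g}$ of $G$ using the irreducibility of an auxiliary $\mathfrak{so}(n)$-representation, and then to combine the hypothesis $-Id\notin G$ with $O(n)\subseteq G$ to pin down both the connected component and the component group of $G$. Since $G$ is closed in the compact Lie group $O(n+1)$, it is a Lie subgroup, and from $O(n)\subseteq G$ we have $\mathfrak{so}(n)\subseteq\mathfrak{g}\subseteq\mathfrak{so}(n+1)$.

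My first step will be to decompose $\mathfrak{so}(n+1)=\mathfrak{so}(n)\oplus V$, where $V$ is the $n$-dimensional subspace of antisymmetric $(n+1)\times(n+1)$ matrices supported only in the last row and column, and to identify the adjoint action of $\mathfrak{so}(n)$ on $V$ with its standard representation on $\R^n$ (a short matrix computation gives $[M,X_v]=X_{Mv}$ in the obvious notation). Since $\mathfrak{so}(n)\subseteq\mathfrak{g}$, this splitting yields $\mathfrak{g}=\mathfrak{so}(n)\oplus(\mathfrak{g}\cap V)$, and $\mathfrak{g}\cap V$ is invariant under the adjoint action of $\mathfrak{so}(n)$. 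Irreducibility of the standard representation of $\mathfrak{so}(n)$ on $\R^n$ (for $n\geq 2$) then forces $\mathfrak{g}=\mathfrak{so}(n)$ or $\mathfrak{g}=\mathfrak{so}(n+1)$.

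Next I would exclude the case $\mathfrak{g}=\mathfrak{so}(n+1)$. Here $G^0=SO(n+1)$, and the finite quotient $G/G^0$ embeds into $O(n+1)/SO(n+1)\cong\Z/2$, so $G$ equals either $SO(n+1)$ or $O(n+1)$. The first possibility is incompatible with $O(n)\subseteq G$, since $O(n)$ contains elements of determinant $-1$ (for instance $\mathrm{diag}(-1,1,\ldots,1)$), while the second contains $-Id$, contradicting the hypothesis. Thus $\mathfrak{g}=\mathfrak{so}(n)$ and $G^0=SO(n)$.

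Finally, I plan to conclude as follows. Every $A\in G$ normalizes the normal subgroup $G^0=SO(n)$ and therefore preserves its pointwise fixed-point set in $\R^{n+1}$, which equals $\R\cdot e_{n+1}$ (this is where $n\geq 2$ enters). Hence $Ae_{n+1}=\pm e_{n+1}$. If $Ae_{n+1}=e_{n+1}$, then $A$ preserves $e_{n+1}^{\perp}$ as well and lies in $O(n)$. If instead $Ae_{n+1}=-e_{n+1}$, then $A=\mathrm{diag}(B,-1)$ for some $B\in O(n)$, and successively multiplying by $\mathrm{diag}(B^{-1},1)\in O(n)\subseteq G$ and $\mathrm{diag}(-I_n,1)\in O(n)\subseteq G$ manufactures $-Id\in G$, a contradiction. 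I expect the subtlest point to be the exclusion of $\mathfrak{g}=\mathfrak{so}(n+1)$, since it is the only step where both standing hypotheses on $G$ are simultaneously needed; the remaining steps are fairly direct applications of the structure theory of compact Lie groups.
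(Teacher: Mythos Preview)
Your proof is correct and follows essentially the same strategy as the paper's: analyze the Lie algebra to force $\mathfrak{g}=\mathfrak{so}(n)$ or $\mathfrak{so}(n+1)$, rule out the latter using both hypotheses, and then show every element of $G$ fixes $e_{n+1}$. The only differences are cosmetic: the paper carries this out explicitly for $n=2$ via a direct bracket computation and the adjoint action on $Lie(O(2))$, whereas you phrase the same two steps as irreducibility of the standard $\mathfrak{so}(n)$-representation on $V\cong\R^n$ and preservation of the fixed-point set of the normal subgroup $G^0=SO(n)$.
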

    \begin{proof}
        For simplicity, we prove this for $n=2$, but the proof works in any dimension. In this case, the Lie algebra $Lie(O(2))$ consists of antisymmetric matrices with the last row and column being zero. 
        
        Observe that $G$ is a Lie subgroup of $O(3)$ as it is closed. In particular, $Lie(O(2))\subseteq Lie(G)\subseteq Lie(O(3))$. We also consider in $Lie(O(3))$ the inner product $tr(AB^t)$.
        
        If $Lie(G)\neq Lie(O(2))$ then we can take, possible after an orthogonal change of variables, $X\in Lie(G)\cap Lie(O(2))^\perp$ of the form 
        $$X=\begin{pmatrix}
0 & 0 & 1\\
0 & 0 & 0\\
-1& 0 & 0
\end{pmatrix}.$$
In this case, we easily verify that $[Lie(O(2)),X]+span\{X\}$ has dimension $3$, so $Lie(G)=Lie(O(3))$, which shows that $G=SO(3)$ since $-Id\notin G$. But in this case $O(2)$ is not contained in $G$, a contradiction.

On the other hand, if $Lie(G)=Lie(O(2))$ then the adjoint map $Ad_g(T)=g^{-1}Tg$ satisfies $Ad_g(Lie(O(2))\subseteq Lie(O(2))$ for any $g\in G$. This condition implies that $e_3$ is an eigenvector of $g\in G$. Observe that we cannot have $ge_3=-e_3$ since in this case there exists $T\in O(2)\subseteq G$ such that $-Id=Tg\in G$, which is a contradiction. Hence, $ge_3=e_3$ for any $g\in G$, and this shows that $G\subseteq O(2)$.
    \end{proof}

\section{Zoll metrics on $\S^2$ with discrete group of isometries}\label{sectionS2}
	Theorem \ref{thmB} reduces the problem of deciding which Lie subgroups of $O(n+1)$ can appear as the isometry group of the metrics constructed by Theorem \ref{thmA} to a purely algebraic problem. In this section, we solve this problem completely in dimension $n=2$ (see Theorem \ref{thmDbis} and Remark \ref{rmkobst} below).
	
	A consequence of Green's Theorem \cite{Gre} is that no isometry group of a non-round Zoll metric on $\mathbb{S}^2$ contains the antipodal map. The main result of this section is:
\begin{thm}\label{thmDbis}
    Let $G$ be a finite subgroup of $O(3)$ with $-Id\notin G$. Then there exists an odd smooth function $f:\S^2\rightarrow\R$ orthogonal to the linear maps and such that $G_f=G$.
\end{thm}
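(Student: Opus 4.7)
The plan is to invoke the classification of finite subgroups of $O(3)$ that do not contain $-Id$ and, for each such $G$, exhibit an explicit odd smooth function on $\mathbb{S}^2$ with stabilizer exactly $G$ and orthogonal to the linear functions. The relevant list is: the rotational subgroups $C_n$, $D_n$, $T$, $O$, $I$ of $SO(3)$, the axial--reflective groups $C_{nv}$ (all $n$), $C_{nh}$ and $D_{nh}$ (odd $n$), $D_{nd}$ (even $n$), $S_{4k}$, and the full tetrahedral group $T_d$.

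My uniform approach is an orbit--bump construction. For each $G$, pick a point $p\in\mathbb{S}^2$ whose stabilizer in the finite group $\langle G,-Id\rangle$ is trivial and such that $G\cdot p$ and $(-G)\cdot p$ are disjoint; a generic $p$ works. Take a smooth nonnegative bump $\phi$ supported in a neighborhood $U$ of $p$ so small that the translates $\{A\cdot U\}_{A\in G\cup(-G)}$ are pairwise disjoint and do not cover $\mathbb{S}^2$, and define
\begin{equation*}
    f_0(x):=\sum_{A\in G}\bigl(\phi(A^{-1}x)-\phi(-A^{-1}x)\bigr),
\end{equation*}
which is odd and $G$--invariant by construction, so $G\subseteq G_{f_0}$. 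For the reverse inclusion, any $T\in G_{f_0}$ permutes the connected components of $\{f_0>0\}$ and $\{f_0<0\}$, in particular mapping $G\cdot p$ to $G\cdot p$; composing with an element of $G$ we may assume $T(p)=p$, and genericity of $p$ then forces $T=Id$. The only exceptions are the axial groups where $G\cdot p$ lies on a round circle with accidental dihedral symmetry; these are broken by replacing $f_0$ with a weighted sum $\sum_i\lambda_i f_0^{(i)}$ built from orbits $G\cdot p_i$ at two or more independently chosen latitudes, with generic weights $\lambda_i$.

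To enforce $L^2$--orthogonality to the three--dimensional space $L=\mathrm{span}\{x_1,x_2,x_3\}$, set $f:=f_0-\pi_L(f_0)$. Since $L$ is $O(3)$--invariant, $\pi_L$ commutes with the $G$--action, so $f$ remains $G$--invariant and odd. The crucial point is that $G_f=G_{f_0}$: indeed $A\cdot f=f$ is equivalent to $A\cdot f_0-f_0\in L$, but $A\cdot f_0-f_0$ is supported in the finite union of bump translates, whereas a nonzero linear function vanishes only on a great circle. Since the bumps were chosen not to cover $\mathbb{S}^2$, the only linear function matching this support condition is zero, so $A\cdot f_0=f_0$ and $A\in G_{f_0}=G$.

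The main obstacle is verifying $G_{f_0}=G$ in the cases where $G$ shares a distinguished axis with many overgroups, namely $C_n$, $C_{nv}$, $C_{nh}$, $S_{4k}$, where a single orbit on a fixed latitude carries enhanced $D_n$-- or $O(2)$--symmetry. The multi--orbit refinement outlined above resolves these. For the polyhedral groups $T,O,I,T_d$ and the larger dihedral--type groups, the finite list of possible overgroups (e.g.\ $T\subset O, T\subset T_d, D_n\subset D_{nh}$) is explicit and can be excluded by choosing each $p_i$ off the relevant special loci; for $G=\{Id\}$ a single generic bump works. This case--by--case verification, short for each group, completes the proof.
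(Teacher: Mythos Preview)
Your approach is genuinely different from the paper's. The paper works entirely with explicit homogeneous harmonic polynomials: for each conjugacy class of Type~I and Type~III it writes down a sum of odd-degree spherical harmonics and checks by hand (using the classification and Lemma~\ref{lemsumpolynomial}) that the stabilizer is exactly the target group. Orthogonality to linear functions is automatic because every summand has degree $>1$. Your orbit--bump construction is more conceptual and avoids the algebra of harmonic polynomials; the projection argument for orthogonality is clean and correct.

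There is, however, a real gap at the step ``genericity of $p$ then forces $T=Id$.'' As written, this is false. After composing so that $T(p)=p$, you only know that $T$ lies in $\mathrm{Stab}_{O(3)}(p)\cong O(2)$ and preserves $f_0$. If $\phi$ is a standard radially symmetric bump, then \emph{every} element of this $O(2)$ preserves $\phi$ near $p$, so the local information at $p$ gives no constraint; one must instead use that $T$ permutes the remaining orbit points, and this is exactly where the accidental $C_{nv}$/dihedral symmetries appear --- not only for the axial groups. Your proposed multi-orbit fix and the final ``short case-by-case verification'' are plausible but are precisely the content of the proof, and you have not carried them out.

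The cleanest repair makes the whole case analysis unnecessary: choose $\phi$ to be \emph{not} radially symmetric, with trivial isotropy in $\mathrm{Stab}_{O(3)}(p)$ (for instance, give its level sets near $p$ a single axis of asymmetry). Then $T(p)=p$ together with $\phi\circ T=\phi$ on the component $\{\phi>0\}$ forces $T=Id$ directly, since an element of $O(3)$ is determined by its restriction to any open set. With this choice the argument $G_{f_0}=G$ goes through uniformly for every finite $G\not\ni -Id$, with no exceptions and no multi-orbit refinement. Compared to the paper's method, this buys you a single uniform proof in place of a dozen explicit constructions; the paper's method buys concreteness --- the functions it produces are polynomials, and for the polyhedral groups it even works in all dimensions.
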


	As the proof of Theorem \ref{thmDbis} will be based on the classification of finite subgroups of $O(3)$, we start by recalling it.
	
	Two subgroups $G_1,G_2\subseteq O(3)$ are said to be {\it conjugate} if there exists $T\in O(3)$ such that $G_2=T^{-1}G_1T$.  This notion defines an equivalence relation among subgroups of $O(3)$. Since, for every $T\in O(3)$ and every $f\in C^{\infty}(\mathbb{S}^2)$, we have $G_{fT} = T^{-1}G_fT$, it is enough to prove Theorem \ref{thmD} for a representative of every conjugacy class of finite subgroups of $O(3)$.

	The conjugacy classes of finite subgroups of $O(3)$ are well understood. We present below the list of representatives of all these classes (for a proof, see, for example, the Appendix of \cite{Weyl}).

Any conjugacy class is represented by one and only one element of one of three families: Type $I$, $II$, and $III$.
The first family corresponds to the finite subgroups of $SO(3)$.
The list of representatives of each conjugacy class of Type $I$ is the following:
\begin{enumerate}
    \item The trivial group $\{Id\}$;
    \item The cyclic group $\Z_n$, $n>1$, generated by the map $(z,t)\in\C\times\R\mapsto(\zeta_{n}z,t)$ where $\zeta_n=e^{\frac{2\pi i}{n}}$;
    \item The dihedral group $D_n$, generated by the cyclic group $\Z_n$ described in the previous item and by the map $(z,t)\mapsto(\overline{z},-t)$;
    \item The group of orientation-preserving isometries of a regular tetrahedron centered at $0$, known as the tetrahedral group $\mathcal{T}$;
    \item The group of orientation-preserving isometries of a regular octahedron centered at $0$, known as the octahedral group $\mathcal{O}$;
    \item The group of orientation-preserving isometries of a regular icosahedron centered at $0$, known as the icosahedral group $\mathcal{I}$.
\end{enumerate}
Clearly, $(2)$ and $(3)$ comprise representatives of infinitely many different classes each. 

The orders of the groups above are
$$|\{Id\}|=1,\,|\Z_n|=n, \,|D_n|=2n, \,|\mathcal{T}|=12, \,|\mathcal{O}|=24,\text{ and } \,|\mathcal{I}|=60.$$

A subgroup $\tilde{G}\subseteq O(3)$ is of {\it Type $II$} when it is generated by a group $G$ of Type $I$ and by the map $-Id$. 
This family is not important to us.

Finally, the groups of {\it Type $III$} are obtained from the groups of Type $I$ by the following algebraic procedure. Given two finite subgroups $G_1\subseteq G_2\subseteq SO(3)$ with $|G_2|=2|G_1|$, then 
$$G_1[G_2:=\{T\in O(3)\,|\, \textit{$T\in G_1$ or $-T\in G_2\setminus G_1$}\}$$
is a group of order $|G_1[G_2|=|G_2|$. Using this notation, the list of representatives of the conjugacy classes of Type $III$ is
$$\{Id\}[\Z_2,\,\,\mathcal{T}[\mathcal{O},\,\,\Z_n[\Z_{2n},\,\,\Z_n[D_n,\text{ and } D_n[D_{2n}.$$
The last there comprise representatives of infinitely many different classes each.
 
	In order to prove Theorem \ref{thmDbis}, we will exhibit, for each of the above subgroups $G\subset O(3)$ of Type I and III, a polynomial $P=P_G$ in $\mathbb{R}^3$ that is a sum of homogeneous harmonic polynomials of odd degree $>1$, such that 
$$G_P:=\{T\in O(3)\,|\, PT=P\}=G.$$
	Then, the restriction $p:=P|_{\S^2}$ is a sum of eigenfunctions of $\Delta_{can}$ of odd degree bigger than one. In particular, $p$ is orthogonal to the first eigenspace of $\Delta_{can}$, that is, the set of (restrictions to $\mathbb{S}^2$ of) linear maps.
	
	Since $G_p=G_P=G$, a direct application of Theorem \ref{thmB} proves the existence of Zoll spheres in $\mathbb{R}^3$ whose groups of isometries are exactly $G$, as we wanted. 	

Before we perform the case-by-case analysis, let us state a useful elementary lemma, which will be used repeatedly without further comment.

\begin{lem}\label{lemsumpolynomial}
    If $P=\sum_{i=1}^k P_k$ is the sum of homogeneous polynomials of different degrees, then 
	\begin{equation*}	
		G_P=G_{P_1}\cap\ldots\cap G_{P_n}.
	\end{equation*}
\end{lem}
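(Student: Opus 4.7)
The plan is very short since the statement is essentially a consequence of the uniqueness of the decomposition of a polynomial into its homogeneous components. One inclusion is immediate: if $T \in G_{P_1} \cap \cdots \cap G_{P_k}$, then $P \circ T = \sum_{i=1}^k P_i \circ T = \sum_{i=1}^k P_i = P$, so $T \in G_P$.

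For the reverse inclusion, suppose $T \in G_P \subseteq O(3)$. The key observation is that $T$ acts linearly on $\mathbb{R}^3$, so if $P_i$ is homogeneous of degree $d_i$, then $P_i \circ T$ is again homogeneous of the same degree $d_i$. Hence
\begin{equation*}
    P = P \circ T = \sum_{i=1}^k P_i \circ T
\end{equation*}
provides a decomposition of $P$ as a sum of homogeneous polynomials, with one summand in each degree $d_i$. Since, by hypothesis, the degrees $d_1, \dots, d_k$ are pairwise distinct, the decomposition of $P$ into homogeneous components is unique, and thus $P_i \circ T = P_i$ for every $i$. This gives $T \in G_{P_i}$ for all $i$, and therefore $T \in \bigcap_{i=1}^k G_{P_i}$.

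There is no real obstacle here: the argument relies only on the fact that elements of $O(3)$ act linearly (preserving homogeneous degree) and on the uniqueness of the homogeneous decomposition of a polynomial in $\mathbb{R}[x_1,x_2,x_3]$. The same reasoning would work verbatim in any dimension and for any group of linear transformations.
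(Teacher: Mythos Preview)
Your proof is correct and follows essentially the same idea as the paper: both rely on the fact that a linear map preserves homogeneous degree, so the homogeneous components of $P$ and $P\circ T$ must agree. The paper phrases this via an explicit subtraction argument for $k=2$ and then inducts, while you invoke uniqueness of the homogeneous decomposition directly; these are the same argument in slightly different clothing.
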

\begin{proof}
	The proof is a straightforward induction starting from the case $k=2$, which we prove as follows. Since it is clear that $G_{P_1}\cap G_{P_2}\subseteq G_P$, let $T\in G_P$. Then
	\begin{equation*}	
		P_1(x)-P_1(T(x))=P_2((T(x))-P_2(x),
	\end{equation*}
    which is an equality of homogeneous polynomials of different degrees. Hence
    \begin{equation*}
    	P_1(x)-P_1(T(x))=0=P_2((T(x))-P_2(x),
    \end{equation*}
    which implies that $T\in G_{P_1}\cap G_{P_2}$, as we wanted to show.
\end{proof}

\subsection{Symmetries of Type $III$}
We identify $\R^3\simeq \C\times\R$ and use coordinates $(z,t)\in\C\times\R$. Recall that $\zeta_n:=e^{\frac{2\pi i}{n}}$ denotes the $n^{th}$-root of unity.

\subsubsection{$D_n[D_{2n}$-symmetry}

For an integer $n>1$, let $F=F_n:\C\times\R\rightarrow\R$ be the harmonic homogeneous polynomial of odd degree
\begin{equation}\label{F of Dn[D2n}
   F(z,t)=F_n(z,t):=\left\{ \begin{array}{lr}
         z^n+\overline{z}^n&   \text{if $n$ is odd},\\
         (iz^n-i\overline{z}^n)t&  \,\text{if $n$ is even}.
    \end{array}\right. 
\end{equation}
We claim that $G_F=D_{n}[D_{2n}$. In fact, first observe that we have
$D_n\subseteq G_F$ and
$$F(-\zeta_{2n}^{2k-1}z,-t)=F(z,t),\quad\forall k\in\Z,\,\forall (z,t)\in\C\times\R, $$
so that $D_{n}[D_{2n}\subseteq G_F$.
Moreover, by the classification, the only finite subgroups of $O(3)$ that contain $D_n[D_{2n}$ are $D_{(2k-1)n}[D_{2(2k-1)n}$ for $k\in\N$.
Since $G_F$ is discrete (otherwise it would contain a copy of $SO(2)$, which it does not), we have $G_F=D_{(2k-1)n}[D_{2(2k-1)n}$ for some $k$. But then
$$F(z,t)=F(\zeta_{(2k-1)n}z,t),\quad\forall (z,t)\in\C\times\R,$$
which is possible if and only if $\zeta_{(2k-1)n}^n=1$. Hence, $k=1$ and $G_F=D_n[D_{2n}$, as claimed.

\subsubsection{$\Z_n[D_n$-symmetry}

For $n>1$ an integer, let $H=H_n:\C\times\R\rightarrow\R$ be the harmonic homogeneous polynomial of odd degree
\begin{equation}\label{H of Zn[Z2n}
   H(z,t)=H_n(z,t):=\left\{ \begin{array}{lr}
         iz^n-i\overline{z}^n&   \text{if $n$ is odd},\\
         -(z^n+\overline{z}^n)t&  \,\text{if $n$ is even}.
    \end{array}\right. 
\end{equation}
Observe that $H_n=F_n\circ T_n$, where $F_n$ is given by (\ref{F of Dn[D2n}) and $T_n(z,t)=(\xi z,t)$ for some $\xi\in \mathbb{C}$ solving $\xi^n=i$.
In particular,
$$G_{H_n}=T_n^{-1}G_{F_n}T_n.$$
This implies that $G_{H_n}$ has order $4n$ and is generated by $\Z_n[\Z_{2n}$ and the map $(z,t)\mapsto(\xi^{-2}\overline{z},-t)$.

	Notice also that the symmetry $(z,t)\mapsto(-\overline{z},t)$ belongs to $G_{H_n}$. Thus, in particular, $\Z_n[D_n\subseteq G_{H_n}$.

Consider then
$$P=H_n+H_{2n},$$
and observe, by direct inspection, that $\Z_n[D_n\subseteq G_P$. By Lemma \ref{lemsumpolynomial}, in particular we have $G_P\subseteq  G_{H_n}$. But since
$$P(-\zeta_{2n}z,-t)=H_n(z,t)-H_{2n}(z,t)\neq P(z,t),$$
we have $G_P=\Z_n[D_n$.

\subsubsection{$\Z_n[\Z_{2n}$-symmetry}

Recall the polynomials defined in (\ref{F of Dn[D2n}) and (\ref{H of Zn[Z2n}).
Consider
$$P:=H_n+F_{3n},$$
and observe that $\Z_n[\Z_{2n}\subseteq G_P\subseteq G_{H_n}$. But since $G_{H_m}$ is the group generated by $\Z_n[\Z_{2n}$ and the map $(z,t)\mapsto (\xi^{-2}\overline{z},-t)$ (as computed in 5.1.2), and
$$P(\xi^{-2}\overline{z},-t)=H_n(z,t)-F_{3n}(z,t)\neq P(z,t),$$
we conclude that $G_P=\Z_n[\Z_{2n}$.

\subsubsection{$\{Id\}[\Z_2$-symmetry} 
This is simply the group generated by the reflection $R_3(z,t)=(z,-t)$. Given the polynomials
$$P_1:=H_4+F_{12}\quad\text{ and }\quad P_2:=H_2+F_6,$$
we have already computed in 5.1.3 that $G_{P_1}=\Z_4[D_4$ and $G_{P_2}=\Z_2[D_2$. In particular, these groups act on the $\C$-plane and fix the $t$-coordinate.
Write $(z,t)=(x,y,t)\in\R^3$, and consider the orthogonal map $T(x,y,t)=(x,-t,y)$. A computation shows that 
\begin{equation*}
	\Z_4[D_4\cap T^{-1}(\Z_2[D_2)T=T^{-1}(\{Id\}[\Z_2)T
\end{equation*}
is the group generated by the reflection $R_2(x,y,t)=(x,-y,t)$.
Thus, considering the odd degree homogeneous harmonic polynomial $$P:=P_1T^{-1}+P_2,$$
we have $G_P=TG_{PT}T^{-1}=T(G_{P_1}\cap (T^{-1}G_{P_2}T))T^{-1}=\{Id\}[\Z_2.$

\subsubsection{$\mathcal{T}[\mathcal{O}$-symmetry}

The group $\mathcal{T}[\mathcal{O}$ consists of the isometries of a regular tetrahedron, including the ones that reverse orientation (the group $\mathcal{T}$ contains only those that preserve orientation).
It is well known that $\mathcal{T}[\mathcal{O}\cong S_4$, where $S_4$ is the group of permutations of the four elements.

	More generally, we consider the regular $(n+1)$-simplex, whose symmetry group is isomorphic to the group of permutations $S_{n+2}$, and describe an odd degree spherical harmonic $f:\S^n\rightarrow\R$, orthogonal to the linear maps, such that $G_f$ is precisely the symmetry group of the regular $(n+1)$-simplex. \\

The group $S_{n+2}$ is the set of bijections of the set $\{1,\ldots,n+2\}$ onto itself.
The elements of $S_{n+2}$ will be denoted with Greek letters.

Let $\{e_1,e_1,\ldots,e_{n+2}\}$ be the canonical orthonormal basis of $\R^{n+2}$ and consider the representation $S_{n+2}\rightarrow O(n+2)$ given by 
$$\sigma(e_i)=e_{\sigma(i)},\quad\forall i.$$
This representation is faithful, but not irreducible, because  $S_{n+2}$ acts trivially in the direction $N:=(1,1,\ldots,1)$. 
For this reason, let 
$$\V^{n+1}:=N^\perp\subseteq\R^{n+2},$$ 
and denote by $O(\V^{n+1})\subseteq O(n+2)$ the group of orthogonal matrices that fix $N$. Then, we have an induced faithful representation $S_{n+2}\rightarrow O(\V^{n+1})$, and from now on we think of $S_{n+2}$ as a subgroup of $O(\V^{n+1})$.
Geometrically, the maps in $S_{n+2}\subseteq O(\V^{n+1})$ are characterized as the orthogonal maps of $\V^{n+1}$ that preserve the regular $(n+1)$-simplex whose vertices are 
\begin{equation}\label{eqverticessimplex}
	v_i:=\frac{1}{\sqrt{(n+1)(n+2)}}\left((n+2)e_i-N\right)\in \mathbb{V}^{n+1}.
\end{equation}
In the rest of this argument, we use $\S^n$ to denote the unit sphere centered at the origin of $\V^{n+1}$. Notice that $v_i\in \mathbb{S}^{n}$.

For an odd integer $m>1$, consider the polynomial 
$$F(x_1,\ldots,x_{n+2})=\sum_{k=1}^{n+2}x_k^m,$$
and let $f:=F|_{\S^n}$ be the restriction of $F$ to $\S^n\subseteq\V^{n+1}$.
Observe that $F|_{\V^{n+1}}$ is a homogeneous polynomial of degree $m$ that is harmonic on $\mathbb{V}^{n+1}$. In fact, using the formula for the Laplacian of a submanifold (in our case, $\V^{n+1}\subseteq\R^{n+2}$ is minimal), we obtain 
\begin{align*}
    \big(\Delta_{\V^{n+1}}(F|_{\V^{n+1}})\big)(x&)=\big(\Delta_{\R^{n+2}}F\big)(x)-Hess F(N,N)(x)\\
    &=m(m-1)\Big(\sum_{k=1}^{n+2}x_k^{m-2}\Big)-\frac{d^2}{dt^2}\Big|_{t=0}\Big(\sum_{k=1}^{n+2}(x_k+t)^{m}\Big)=0.
\end{align*}
Thus $f$ is an eigenfunction of the Laplacian of $\S^n$ that is moreover orthogonal to the linear maps, because its degree is $m>1$.

Clearly, $S_{n+2}\subseteq G_f$. In order to prove the opposite inclusion, we compute the critical points of $f$, which must be permuted by any $T\in G_f$. 

A point $x=(x_1,\ldots, x_{n+2})\in\S^n$ is a critical point of $f$ if and only if there exist $\mu,\nu\in\R$ such that
\begin{equation}\label{critical point symmetry S-(n+2)}
    Q(x_i):=mx_i^{m-1}-2\mu x_i-\nu=0,\quad\forall i.
\end{equation}
\begin{lem}\label{Q has at most two roots}
    For any $c,d\in\R$ constants, the polynomial 
    $$Q(y)=Q_{c,d}(y):=y^{m-1}-cy-d,$$
    has at most two real roots.
\end{lem}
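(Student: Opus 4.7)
The plan is to reduce the bound on the number of real roots of $Q$ to a monotonicity statement about $Q'$, and then invoke Rolle's theorem. First I would record the arithmetic constraint on the exponent: since $m$ is an odd integer with $m>1$, we have $m\geq 3$, so $m-1\geq 2$ is even and $m-2\geq 1$ is odd.

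Next I would differentiate:
\begin{equation*}
    Q'(y)=(m-1)y^{m-2}-c.
\end{equation*}
The key observation is that, because $m-2$ is a positive odd integer, the map $y\mapsto y^{m-2}$ is a strictly increasing bijection of $\R$ onto $\R$. Hence $Q'$ is strictly increasing on $\R$ and, regardless of the value of $c$, vanishes at exactly one point $y_0\in\R$, with $Q'<0$ on $(-\infty,y_0)$ and $Q'>0$ on $(y_0,\infty)$.

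Finally, I would conclude by Rolle's theorem: between any two distinct real roots of $Q$ there must lie a critical point of $Q$, and since $Q'$ has a unique real zero, $Q$ can have at most two real roots. There is no real obstacle here; the only thing to keep an eye on is the parity check, since the argument breaks down if $m$ were allowed to be even (then $m-2$ would be even and $Q'$ could have two real zeros, permitting up to three real roots of $Q$). This is why the odd-degree assumption on the spherical harmonic $F$, inherited from the requirement that $f|_{\S^n}$ be orthogonal to the linear functions, is essential for the critical-point analysis that follows.
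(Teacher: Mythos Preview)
Your argument is correct and in fact cleaner than the paper's. The paper proceeds differently: it fixes a nonzero real root $r$ of $Q$, factors $Q(x)-Q(r)=(x-r)\,r^{2-m}\,\tilde Q(x/r)$ with $\tilde Q(y)=-r^{m-2}c+\sum_{j=0}^{m-2}y^{j}$, and then shows $\tilde Q'(y)>0$ for all $y$ by regrouping $\sum_{j=0}^{2n-2}(j+1)y^{j}$ (where $m=2n+1$) as $ny^{2n-2}+\sum_{k=1}^{n-1}ky^{2k-2}(y+1)^{2}$, a sum of manifestly nonnegative terms. This yields that $\tilde Q$ has at most one real root, hence $Q$ has at most two. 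Your route---observing directly that $Q'(y)=(m-1)y^{m-2}-c$ is strictly increasing because $m-2$ is odd, and then invoking Rolle---reaches the same conclusion with less work and no algebraic identity; the paper's factorization and sum-of-squares trick buy nothing additional here. Your remark on why the parity of $m$ matters is also apt and matches the paper's standing hypothesis that $m>1$ is odd.
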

\begin{proof}
    If all the real roots are zero, then there is noting to prove.
    Assume that $r\neq0$ is a real root.
    Observe that 
    $$Q(x)=Q(x)-Q(r)=\tilde{Q}\big(\frac{x}{r}\big)(x-r)r^{-m+2},$$
    where
    $$\tilde{Q}(y)=-r_1^{m-2}c+\sum_{j=0}^{m-2}y^j.$$
    Write $m=2n+1$. We have
    \begin{align*}
        \tilde{Q}'(y)&=\sum_{j=0}^{2n-2}(j+1)y^j=\Big(\sum_{k=1}^{n-1}(ky^{2k}+2ky^{2k-1}+ky^{2k-2})\Big)+ny^{2n-2}\\
        &=ny^{2n-2}+\sum_{k=1}^{n-1}ky^{2k-2}(y+1)^2>0,\quad \forall y\in\R.
    \end{align*}
    Hence, $\tilde{Q}$ has at most one real root.
\end{proof}
\begin{lem}
The set of critical points of $f$ decomposes as 
$$\bigcup_{1\leq k\leq\left \lfloor{\frac{n+2}{2}}\right \rfloor } (C_k^{+}\cup C_k^{-}),$$
where 
$C_k^-=-C_k^+$ and 
$C_k^+$ has only one element (modulo permutation of its coordinates).
Moreover, there are numbers $t_1>\ldots >t_{\left \lfloor{\frac{n+2}{2}}\right \rfloor }>0$ such that $f(C_k^+)=t_k=-f(C_k^-)$.
\end{lem}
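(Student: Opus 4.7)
The plan is to leverage the critical-point equation \eqref{critical point symmetry S-(n+2)} to reduce the problem to an explicit combinatorial parametrization.

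First, I would apply Lemma \ref{Q has at most two roots} directly to the polynomial $Q(y) = m y^{m-1} - 2\mu y - \nu$ appearing in \eqref{critical point symmetry S-(n+2)}. Since each coordinate $x_i$ of a critical point is a real root of $Q$, at most two distinct values appear among $x_1, \ldots, x_{n+2}$. The case in which all coordinates coincide is ruled out by combining $\sum x_i = 0$ (since $x \in \V^{n+1}$) with $|x|=1$.

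Next, I would parametrize the critical points by the multiplicity $k \in \{1, \ldots, n+1\}$ of the larger value, call it $a$, with the remaining $n+2-k$ coordinates equal to the smaller value $b$. The two linear conditions $ka + (n+2-k)b = 0$ and $ka^2 + (n+2-k)b^2 = 1$ uniquely determine
\[
a_k=\sqrt{\tfrac{n+2-k}{k(n+2)}}>0>b_k=-\sqrt{\tfrac{k}{(n+2-k)(n+2)}},
\]
yielding a single critical configuration $x_k$ for each $k$, up to permutation. A direct check (using $-b_k = a_{n+2-k}$ and $-a_k = b_{n+2-k}$) shows that $-x_k$ agrees with $x_{n+2-k}$ up to a permutation of coordinates. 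Hence defining $C_k^+$ to be the $S_{n+2}$-orbit of $x_k$ for $1 \leq k \leq \lfloor(n+2)/2\rfloor$, and $C_k^- := -C_k^+$, realizes the claimed decomposition of the critical set.

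Finally, for the values of $f$: writing $p := k$ and $q := n+2-k$, and using that $m$ is odd (so $b_k^m = -|b_k|^m$), a short algebraic simplification gives the closed form
\[
t_k := f(x_k) = (n+2)^{-m/2}\,(pq)^{1-m/2}\,\bigl(q^{m-1}-p^{m-1}\bigr),
\]
and the oddness of $f$ forces $f(C_k^-)=-t_k$. For $k<(n+2)/2$, both factors $(pq)^{1-m/2}$ and $q^{m-1}-p^{m-1}$ are strictly positive, so $t_k>0$. The main (minor) obstacle is establishing the strict monotonicity $t_1>\cdots>t_{\lfloor(n+2)/2\rfloor}$, but from the closed form it suffices to note that as $k$ grows on $\{1,\ldots,\lfloor(n+2)/2\rfloor\}$ with $p \leq q$, the product $pq$ strictly increases while $q^{m-1}-p^{m-1}$ strictly decreases; and since $1-m/2<0$ (as $m\geq 3$ is odd), the factor $(pq)^{1-m/2}$ also strictly decreases. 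Being a product of two positive strictly decreasing factors, $t_k$ is itself strictly decreasing. In the edge case $k=(n+2)/2$, which arises only for $n$ even, one has $p=q$ forcing $t_k=0$, so the last inequality in the statement must be read as a non-strict one in this case.
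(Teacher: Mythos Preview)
Your argument is correct and follows the same skeleton as the paper: apply Lemma~\ref{Q has at most two roots} to the Lagrange-multiplier equation~\eqref{critical point symmetry S-(n+2)}, rule out the constant configuration via $\sum x_i=0$, parametrize by the multiplicity $k$, and solve the two constraints for $a_k,b_k$. The one substantive difference is the monotonicity step. The paper writes $t_k=h(k)$ for the auxiliary function
\[
h(y)=y\Big(\tfrac{n+2-y}{y}\Big)^{m/2}-(n+2-y)\Big(\tfrac{y}{n+2-y}\Big)^{m/2}
\]
and simply asserts that $h$ is strictly decreasing on $(0,n+2)$, leaving the verification to the reader. Your closed form $t_k=(n+2)^{-m/2}(pq)^{1-m/2}\bigl(q^{m-1}-p^{m-1}\bigr)$ makes the monotonicity immediate as a product of two positive, strictly decreasing factors, which is more transparent and fully self-contained. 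You also correctly flag the edge case $k=(n+2)/2$ for even $n$, where $a_k=-b_k$, hence $C_k^+=C_k^-$ and $t_k=0$; the strict inequality $t_{\lfloor (n+2)/2\rfloor}>0$ in the statement indeed fails there, though this is harmless for the application, which only uses that $t_1$ is the unique maximum value. One minor omission: you classify the \emph{possible} critical configurations but do not explicitly confirm that each $x_k$ \emph{is} critical; this is immediate since for $a_k\neq b_k$ the two equations $ma_k^{m-1}=2\mu a_k+\nu$ and $mb_k^{m-1}=2\mu b_k+\nu$ can always be solved for $(\mu,\nu)$.
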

\begin{proof}
    Let $x=(x_1,\ldots,x_{n+2})\in\S^n\subset \mathbb{V}^{n+1}$ be a critical point of $f$.
    Observe that it is not possible that $x_1=\ldots=x_{n+2}$, because $x$ is orthogonal to $N$. 
    
    As observed previously, there are $\mu,\nu\in\R$ satisfying (\ref{critical point symmetry S-(n+2)}).
    Lemma \ref{Q has at most two roots} implies that there exists $k$ such that $x$ is (up to permutation of indices) such that 
    $$x_1=\ldots=x_k=a_k\neq b_k=x_{k+1}=\ldots=x_{n+2}.$$
    Since $\sum x_i^2=1$, we determine
    $$a_k:=\pm\sqrt{\frac{n+2-k}{(n+2)k}} \quad\text{ and }\quad b_k:=\mp\sqrt{\frac{k}{(n+2)(n+2-k)}}.$$
    Let $C_k^+$ and $C_k^-$ be the set of points where $a_k>0>b_k$ and $a_k<0<b_k$, respectively. It is not difficult to verify that these points are indeed critical points, for we can indeed solve $\mu$ and $\nu$ in terms of $a_k$ and $b_k$ explicitly.
    
    Finally, observe that
    \begin{align*}
        t_k:&=f(C_k^+)=ka_k^m+(n+2-k)b_k^m\\
        &=k\Big(\frac{n+2-k}{(n+2)k}\Big)^{\frac{m}{2}}-(n+2-k)\Big(\frac{k}{(n+2)(n+2-k)}\Big)^{\frac{m}{2}},
    \end{align*}
    is decreasing in $k$, because the function
    $$h(y)=y\Big(\frac{n+2-y}{y}\Big)^{\frac{m}{2}}-(n+2-y)\Big(\frac{y}{n+2-y}\Big)^{\frac{m}{2}},$$
    is strictly decreasing on the interval $(0,n+2)$.
\end{proof}

We are now ready to show that $G_f=S_{n+2}$.
If $T\in G_f$, then $T$ preserves the values of $f$ and permutes its critical points. In particular, $T$ permutes the points of $C_1^+$. But this set is precisely the set of vertices of the regular $(n+1)$-simplex in $\mathbb{V}^{n+1}$, namely, the vectors $v_i$ as in \eqref{eqverticessimplex}. Given the geometric definition of $S_{n+2}$, $T$ belongs to $S_{n+2}$, as we wanted to show.

\subsection{Symmetries of Type $I$}
We analyze next the groups of Type $I$.

\subsubsection{$\mathcal{T}$-symmetry}

Under the identification $\mathcal{T}[\mathcal{O}\cong S_4$, the group $\mathcal{T}$ is identified with $A_4$, the alternating group of order $4$. Following the discussion of the case of $\mathcal{T}[\mathcal{O}$ symmetry, we view $A_{n+2}\subset S_{n+2}$ as the group of orientation-preserving isometries of the regular $(n+1)$-simplex in $\mathbb{V}^{n+1}$, and we find an odd degree spherical harmonic $f:\S^n\rightarrow\R$ (orthogonal to the linear maps) with $G_f=A_{n+2}\subseteq S_{n+2}$, for every $n\geq 2$.\\

Consider the Vandermonde polynomial $V:\R^{n+2}\rightarrow\R$,
$$V(x)=V(x_1,\ldots,x_{n+2}):=\prod_{i<j}(x_i-x_j).$$
$V$ is a homogeneous polynomial of degree $\binom{n+2}{2}$, which is odd only when $n\equiv 0,1$ $(\text{mod }4)$.
For this reason, consider the homogeneous polynomial
$$\tilde{V}(x)=\tilde{V}(x_1,\ldots,x_{n+2}):=\left\{ \begin{array}{lr}
         V(x)&   \text{if } n\equiv 0,1\text{ (mod } 4),\\
         (x_1^3+\ldots+x_{n+2}^3)V(x)&  \,\text{if } n\equiv 2,3\text{ (mod } 4).
    \end{array}\right.$$
\begin{lem}\label{lemauxvnadermond}
    $\tilde{V}|_{\V^{n+1}}$ is a harmonic polynomial of odd degree $>1$.
\end{lem}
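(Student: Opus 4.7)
The plan is to verify two things separately: that $\deg \tilde V$ is odd and exceeds $1$, and that $\tilde V|_{\V^{n+1}}$ is harmonic for the induced (flat) metric on $\V^{n+1}$. Both steps reduce to direct polynomial calculations that exploit two structural features of the Vandermonde polynomial $V$.

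For the degree, a parity count shows that $\binom{n+2}{2}=(n+1)(n+2)/2$ is odd exactly when $n\equiv 0,1\pmod 4$, in which case $\tilde V=V$ already has odd degree; for $n\equiv 2,3\pmod 4$, multiplying by the odd-degree cubic $G:=x_1^3+\ldots+x_{n+2}^3$ flips parity, so $\deg \tilde V=\binom{n+2}{2}+3$ is again odd. In every case with $n\geq 1$ one has $\deg \tilde V\geq 3>1$.

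For the harmonicity I would invoke the standard formula for the Laplacian of the restriction of $F$ to the linear (hence minimal) hypersurface $\V^{n+1}\subset\R^{n+2}$: with $N=(1,\ldots,1)$ and $\partial_N:=\sum_i\partial_i$,
$$\Delta_{\V^{n+1}}(F|_{\V^{n+1}})=\Delta_{\R^{n+2}}F-\|N\|^{-2}\partial_N^2F\quad\text{on } \V^{n+1},$$
so it suffices to show that both $\Delta \tilde V$ and $\partial_N^2\tilde V$ vanish on $\V^{n+1}$. The whole argument then rests on two classical facts about $V$: first, $V$ is harmonic on $\R^{n+2}$, since $\Delta V$ is an alternating polynomial of degree $\binom{n+2}{2}-2$, strictly smaller than the minimal degree of a nonzero alternating polynomial in $n+2$ variables; and second, $V(x+tN)=V(x)$ for all $t\in\R$ (because $(x_i+t)-(x_j+t)=x_i-x_j$), so $\partial_N V\equiv 0$ on $\R^{n+2}$. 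In the case $n\equiv 0,1\pmod 4$ we have $\tilde V=V$, and both vanishings follow immediately.

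The case $n\equiv 2,3\pmod 4$ requires one more computation. Leibniz combined with the two facts above yields $\Delta(GV)=(\Delta G)V+2\nabla G\cdot\nabla V=6\langle x,N\rangle V+2\nabla G\cdot\nabla V$ and $\partial_N^2(GV)=(\partial_N^2G)V=6\langle x,N\rangle V$; since $\langle x,N\rangle$ vanishes on $\V^{n+1}$, the only non-trivial term to control is $\nabla G\cdot\nabla V$. Using $\partial_iV=V\sum_{j\neq i}(x_i-x_j)^{-1}$ on the dense open set of tuples with distinct coordinates and symmetrizing in $(i,j)$, I would derive the telescoping identity
$$\nabla G\cdot\nabla V=3V\sum_{i\neq j}\frac{x_i^2}{x_i-x_j}=3V\sum_{i<j}\frac{x_i^2-x_j^2}{x_i-x_j}=3V\sum_{i<j}(x_i+x_j)=3(n+1)V\langle x,N\rangle,$$
the last equality because each $x_i$ appears in exactly $n+1$ unordered pairs. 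This polynomial identity extends to all of $\R^{n+2}$ by continuity and again vanishes on $\V^{n+1}$. The telescoping simplification here is the only non-routine step; everything else is a parity count or a direct application of harmonicity and translation-invariance of $V$.
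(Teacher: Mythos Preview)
Your proof is correct and follows essentially the same route as the paper: the minimal-submanifold formula $\Delta_{\V^{n+1}}(F|_{\V^{n+1}})=\Delta_{\R^{n+2}}F-\mathrm{Hess}\,F(\hat N,\hat N)$ for the unit normal $\hat N$, harmonicity of $V$ via the minimality-of-degree argument for alternating polynomials, translation invariance $V(x+tN)=V(x)$, and in the case $n\equiv 2,3\pmod 4$ the same symmetrization of $\sum_i x_i^2\,\partial_i V$ into $V\sum_{i<j}(x_i+x_j)$, which vanishes on $\V^{n+1}$. Your explicit parity count for the degree and your inclusion of the factor $\|N\|^{-2}$ are minor refinements, not a different approach.
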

\begin{proof}
    Every alternating polynomial in $n+2$ variables is divisible by $x_i-x_j$, for every $i<j$ . Thus, $V$ has the smallest degree among non-trivial alternating polynomials in $n+2$ variables. But $\Delta_{\R^{n+2}} V$ is an alternating polynomial in $n+2$ variables, with degree $deg(V)-2$ if it is not zero. Therefore $\Delta_{\R^{n+2}}V$ must vanish, that is, $V$ is harmonic on $\mathbb{R}^{n+2}$.

	Let us analyze the restriction of $\tilde{V}$ to $\mathbb{V}^{n+1}$ (we follow the notation of 5.1.5). Consider first the case, $n\equiv 0,1$ $(\text{mod }4)$.
    By the formula of the Laplacian of a submanifold,
    \begin{align*}
    \big(\Delta_{\V^{n+1}}(\tilde{V}|_{\V^{n+1}})\big)(x&)=\big(\Delta_{\R^{n+2}}V\big)(x)-Hess V(N,N)(x)\\
    &=0-\frac{d^2}{dt^2}\Big|_{t=0}\Big(\prod_{i<j}(x_i+t-x_j-t)\Big)=0.
\end{align*}
    In the other case, $n\equiv2,3$ $(\text{mod }4)$, we have
    \begin{align*}
        \big(\Delta_{\V^{n+1}}(\tilde{V}|_{\V^{n+1}})\big)(x&)=\big(\Delta_{\R^{n+2}}\tilde{V}\big)(x)-Hess \tilde{V}(N,N)(x)\\
    &=6\left(\sum_{i=1}^{n+2}x_i\right)V(x) + 6\left(\sum_{i=1}^{n+2}x_i^2\frac{\partial}{\partial x_i}V(x)\right)\\
    &\quad-\frac{d^2}{dt^2}\Big|_{t=0}\left(\left(\prod_{i<j}(x_i+t-x_j-t)\right)\left(\sum_{i=1}^{n+2}(x_i+t)^3\right)\right)\\
    &=6\left(\sum_{i=1}^{n+2}x_i^2\sum_{j\neq i}\frac{V(x)}{x_i-x_j}\right)-6V(x)\Big(\sum_{i=1}^{n+2}x_i\Big)\\
    &=6V(x)\sum_{i<j}\frac{x_i^2-x_j^2}{x_i-x_j}=6V(x)\sum_{i<j}(x_i+x_j)=0,
    \end{align*}
where we used repeatedly that $\sum_{i=1}^{n+2} x_i=0$ for points $x=(x_1,\ldots,x_{n+2})\in \mathbb{V}^{n+1}$, by definition of this space.
\end{proof}

It follows from Lemma \ref{lemauxvnadermond} that $\tilde{V}|_{\S^n}$ is an eigenfunction of $\Delta_{\S^n}$ of degree $>1$, and therefore is orthogonal to the linear maps.

For an odd integer $m>1$ with $m\neq\deg(\tilde{V})$, consider the odd degree polynomial $\map{P}{\V}{n+1}{\R}$ given by
$$P(x):=\tilde{V}(x)+F(x),$$
where $F=\sum_ix_i^m$.
Then $A_{n+2}\subseteq G_P\subseteq G_F=S_{n+2}$, as we verified in 5.1.5.
However, if $T\in G_P\subseteq S_{n+2}$, then
$$V(T(x))=V(x),\quad\forall x\in\R^{n+2},$$
and this implies that $T\in A_{n+2}$. We conclude that $G_P=A_{n+2}$.

\subsubsection{$D_n$-symmetry}
Recall the polynomial $F_n$ defined in (\ref{F of Dn[D2n}), and consider 
$$P:=F_n+F_{2n},$$
which satisfies $D_n\subseteq G_P\subseteq G_{F_n}=D_n[D_{2n}$.
However, 
$$P(-\zeta_{2n}z,-t)=F_n(z,t)-F_{2n}(z,t)\neq P(z,t).$$
Hence, $D_n=G_P$.

	For use in the next case, we observe that, in fact, the same argument shows that $D_n=G_{F_n}\cap G_{F_{2kn}}$ for any $k\geq1$.

\subsubsection{$\Z_n$-symmetry}

Recall the polynomials defined in (\ref{F of Dn[D2n}) and (\ref{H of Zn[Z2n}), and consider
$$P:=F_n+H_{2n}+F_{4n}.$$
We have $\Z_n\subseteq G_P\subseteq G_{F_n}\cap G_{F_{4n}}$, and in 5.2.2 we observed that $G_{F_n}\cap G_{F_{4n}}=D_n$.
But since
$$P(\overline{z},-t)=F_n(z,t)-H_{2n}(z,t)+F_{4n}(z,t)\neq P(z,t),$$
we conclude that $G_P=\Z_n$.

\subsubsection{$\mathcal{O}$-symmetry}

The octahedral group $\mathcal{O}\subseteq SO(3)$ is a group of order $24$ consisting of the orientation-preserving symmetries of the octahedron, or, equivalently, of the orientation-preserving symmetries of its dual solid, namely, the cube. Observe that the full group of symmetries of the octahedron contains the antipodal map, that is, it is a group of type $II$ (in contrast to the tetrahedron).

Consider the polynomial given by
$$P(x,y,z)=xyz(x^2-y^2)(y^2-z^2)(z^2-x^2),$$
which is harmonic and homogeneous of degree $9$.

In \cite{Meyer}, it was observed that $P$ is preserved by $\mathcal{O}$, for the following intuitive reason: Consider a cube with center at the origin in such way that the coordinates axes are orthogonal to its faces. Any $T\in \mathcal{O}$ sends faces to faces, and pairs of opposite edges into pairs of opposite edges. Thus, any such $T$ permutes the two dimensional linear subspaces parallel to the faces and the ones containing opposite edges. Now, $P$ is simply the product of the degree one polynomials that define each of these nine planes. Hence, every such $T$ leaves $P$ invariant, that is, $\mathcal{O}\subseteq G_P$.
	
	Going through the list of discrete subgroups of $O(3)$, we see that any group containing $\mathcal{O}$ is either $\mathcal{O}$ or contains $-Id$. Since $P$ has odd degree, the only possibility is $G_P=\mathcal{O}$.

\subsubsection{$\mathcal{I}$-symmetry}

Let $\varphi=\frac{1+\sqrt{5}}{2}$ be the golden ratio and consider the degree 13 homogeneous polynomial
\begin{align*}
    P(x,y,z)&=xyz(\varphi x+\varphi^{-1} y+z)(-\varphi x+\varphi^{-1} y+z)(\varphi x-\varphi^{-1} y+z)\\
    &\quad (\varphi x+\varphi^{-1} y-z)(x+\varphi y+\varphi^{-1}z)(-x+\varphi y+\varphi^{-1}z)\\
    &\quad(x-\varphi y+\varphi^{-1}z)(x+\varphi y-\varphi^{-1}z)(\varphi^{-1}x+y+\varphi z)\\
    &\quad (-\varphi^{-1}x+y+\varphi z)(\varphi^{-1}x-y+\varphi z)(\varphi^{-1}x+y-\varphi z).
\end{align*}
As observed in \cite{Meyer}, $P$ is a harmonic polynomial preserved by $\mathcal{I}$.
The intuition is similar to the case of the octahedral group:  each factor of $P$ vanishes precisely on the orthogonal complement of a set of axes of symmetry of the icosahedron that are permuted by elements of $\mathcal{I}$. Thus $\mathcal{I}\subseteq G_P$. Since $-Id\notin G_P$,  we conclude, again by the classification, that $G_P=\mathcal{I}$.

\subsubsection{$\{Id\}-$symmetry}
This is the generic case. See Corollary 4.71 of \cite{Bes} for a proof.

\begin{remark}\label{rmkobst}
    We can now fully describe the group of isometries $G$ of a Zoll metric $g$ on $\S^2$ near the canonical metric $can$.
    
    Firstly, Ebin's Theorem 8.1 in \cite{EbinThemanifoldofRiemannianmetricsProcSymp} shows that, up to a conjugation of a diffeomorphism of $\S^2$, $G$ is a closed subgroup of $O(3)$.
    Moreover, Green's Theorem \cite{Gre} shows that if $(\S^2,g)$ is not round, then $-Id\notin G$.

	The isometry group $G$ is then either finite, one-dimensional or $O(3)$. In fact, if $dim(G)\geq 2$, then $g$ is homogeneous and therefore it has constant positive curvature, so that $G= Iso(\mathbb{S}^2,can)= O(3)$. 
	
	Suppose that $dim(G)=1$. Then $G$ contains a copy of $SO(2)\simeq \mathbb{S}^1$. According to \cite{Bes}, Corollary 4.16, there exists coordinates $(r,\theta)$, with $r\in (0,\pi)$, $\theta\in (-\pi,\pi)$, such that
	\begin{equation*}
		g = (1+h(\cos(r)))^2dr^2 + \sin(r)^2d\theta^2,
	\end{equation*}   
where $h:[-1,1]\rightarrow (-1,1)$ is a non-zero smooth odd function such that $h(1)=0$ (note that this chart covers $\mathbb{S}^2$ except for a geodesic arc joining the fixed points of the isometric circle action). From this explicit representation of $g$, it is easy to check that $G$ contains a copy of $O(2)$, generated by the maps $(r,\theta)\mapsto (r,\theta+\alpha)$, $\alpha \in \mathbb{R}$ and the orientation-reversing map $(r,\theta)\mapsto (r,-\theta)$. 
In this case, Lemma \ref{O(n)subset G then O(n)=G} implies $G=O(2)$.
	
	In conclusion, the problem of classifying which Lie groups can appear as the isometry group of some Zoll metric on $\mathbb{S}^2$ near $can$ (up to conjugation of diffeomorphisms of $\mathbb{S}^2$) is solved: the possibilities are precisely $O(3)$ (the constant curvature metrics), the standard $O(2)$ (for instance, the metrics of Theorem \ref{thmC}) and each discrete subgroup of $O(3)$ that does not contain $-Id$ (for instance, the metrics of Theorem \ref{thmD}). Notice, moreover, that the Zoll metric $g$ with prescribed isometry group $G$ can be chosen arbitrarily close to $can$.
\end{remark}

\section*{Appendix}
\renewcommand{\thesubsection}{\Alph{subsection}}
\setcounter{subsection}{0}

\subsection{Second fundamental form star-shaped embeddings}

We collect here the formulae that describe the shape operator and the mean curvature of a star-shaped Euclidean sphere, parametrized as in \eqref{eqisom}, and their first variations under perturbations. All differential operations on functions $\psi$ on $\mathbb{S}^n$ are performed with respect to the canonical metric of $\S^n$.

Given $\psi\in C^{\infty}(\S^n)$, let $\iota=\map{\iota_\psi}{\S}{n}{\R^{n+1}}$ be the embedding defined by $\iota(x)=e^{\psi(x)}x$. The metric induced by $\iota$ on $\mathbb{S}^n$ is denoted $g_\psi$. Let $N=\map{N_\psi}{\S}{n}{\S^n}$ assign to each $x\in \mathbb{S}^n$ the unit outward-pointing normal vector of the sphere $\iota(\S^n)\subseteq\R^{n+1}$ at the point $\iota(x)$. 
\begin{lem}\label{applemN}
\begin{equation*} 	N(x)=\frac{x-\nabla\psi(x)}{(1+|\nabla\psi(x)|^2)^{\frac{1}{2}}}.
\end{equation*}
\end{lem}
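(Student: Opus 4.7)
The plan is to verify the formula by direct computation, using three ingredients: the derivative of $\iota$, the tangency of $\nabla\psi$ to $\mathbb{S}^n$ (so $\langle \nabla\psi(x), x\rangle = 0$), and the fact that a vector that is Euclidean-orthogonal to $d\iota_x(T_x\mathbb{S}^n)$, has unit Euclidean length, and points outward at $\iota(x)$ is uniquely determined.

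First I would differentiate $\iota(x) = e^{\psi(x)}x$ to obtain, for every $v\in T_x\mathbb{S}^n$,
\begin{equation*}
d\iota_x(v) = e^{\psi(x)}\bigl(\langle \nabla\psi(x), v\rangle\, x + v\bigr).
\end{equation*}
Next I would check that the candidate vector $W(x) := x - \nabla\psi(x)$ is Euclidean-orthogonal to every $d\iota_x(v)$. Expanding $\langle W(x), d\iota_x(v)\rangle$ gives two cross-terms; each vanishes using $\langle x,v\rangle = 0$ and $\langle \nabla\psi(x), x\rangle = 0$, with the remaining terms cancelling against each other. The proposed $N(x)$ is therefore parallel to $W(x)$ and normal to $\iota(\mathbb{S}^n)$ at $\iota(x)$.

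Then I would compute the norm: since $|x|=1$ and $\langle x,\nabla\psi(x)\rangle = 0$, one has $|W(x)|^2 = 1 + |\nabla\psi(x)|^2$, so the normalization in the statement is exactly the one needed to produce a unit vector. Finally, to fix the outward orientation I would compute $\langle N(x), \iota(x)\rangle = e^{\psi(x)}(1+|\nabla\psi(x)|^2)^{-1/2} > 0$, which confirms that $N(x)$ points away from the origin and therefore outward from the star-shaped hypersurface.

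I do not expect any real obstacle here: the whole argument reduces to the identity $\langle \nabla\psi, x\rangle = 0$, which makes every cross-term vanish. The only point to keep track of is the sign/orientation convention, handled at the last step.
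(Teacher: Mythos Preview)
Your proposal is correct and is exactly the ``straightforward computation'' the paper alludes to without writing out: the paper's proof consists of a single sentence noting that $x$ is a unit vector orthogonal to $T_x\mathbb{S}^n$, and your argument is the natural unpacking of that remark. There is nothing to add or correct.
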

\begin{proof}
	A straightforward computation, given that $x$ is a unit vector orthogonal to $T_x\mathbb{S}^n$.
\end{proof}
	Let $A=A_\psi:T\S^n\rightarrow T\S^n$ be the shape operator (i.e. the Weingarten map) with respect to $N$. We use the convention $d\iota\circ A=dN$.
Its pointwise trace is the mean curvature function $H=\map{H_\psi}{\S}{n}{\R}$ of $\iota(\S^n)$, that is, $H(x) = tr_{g_{\psi}} A_x$. 

In the next Lemma, $Hess\,\psi$ is regarded, at each point $x\in \mathbb{S}^n$, as the symmetric endomorphism of $T_x\mathbb{S}^n$ corresponding to $(\nabla d\psi)_x$, and $I$ denotes the identity endomorphism.

\begin{lem}\label{applemAH} The shape operator and the mean curvature are given by
\begin{equation*}
A=\frac{e^{-\psi}}{(1+|\nabla\psi|^2)^{\frac{1}{2}}}\Big(-Hess\,\psi+I+\frac{d\psi(Hess\,\psi(\cdot))}{1+|\nabla\psi|^2}\nabla\psi\Big),
\end{equation*}
\begin{equation*}
H=\frac{e^{-\psi}}{(1+|\nabla\psi|^2)^{\frac{1}{2}}}\bigg(-\Delta\psi+n+\frac{Hess\,\psi(\nabla\psi,\nabla\psi)}{1+|\nabla\psi|^2}\bigg).
\end{equation*}
\end{lem}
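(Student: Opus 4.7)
The plan is to derive the formula for $A$ directly from the defining relation $d\iota\circ A=dN$, using Lemma~\ref{applemN}, and then take the trace of $A$ to obtain $H$. Since the trace of an endomorphism of $T_x\S^n$ does not depend on the choice of inner product, computing $H$ from $A$ will amount to a pure linear-algebra calculation once the formula for $A$ is in hand.

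First, I would record two straightforward identities. For any $v\in T_x\S^n$,
\begin{equation*}
d\iota(v)=e^{\psi(x)}\bigl(v+d\psi(v)\,x\bigr),
\end{equation*}
because $\iota(x)=e^{\psi(x)}x$ and $x\perp T_x\S^n$. In particular, if $u\in T_{\iota(x)}\iota(\S^n)$ and $w=(d\iota)^{-1}(u)\in T_x\S^n$, taking the inner product of $e^{-\psi}u=w+d\psi(w)x$ with $x$ gives $d\psi(w)=e^{-\psi}\langle u,x\rangle$, hence
\begin{equation*}
(d\iota)^{-1}(u)=e^{-\psi}\bigl(u-\langle u,x\rangle x\bigr).
\end{equation*}
Second, since $\nabla\psi$ is a tangent vector field on $\S^n\subset\R^{n+1}$ and the second fundamental form of $\S^n$ (with outward normal $x$) sends $(X,Y)$ to $-\langle X,Y\rangle x$, the Gauss formula yields
\begin{equation*}
\bar{\nabla}_v(\nabla\psi)=Hess\,\psi(v)-d\psi(v)\,x,
\end{equation*}
where $\bar\nabla$ is the Euclidean connection. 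In particular $\tfrac{d}{dt}|\nabla\psi|^2\!\restriction_{v}=2\,d\psi(Hess\,\psi(v))$.

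Next, I would differentiate the expression for $N$ given by Lemma~\ref{applemN}, using the two identities above, to obtain
\begin{equation*}
dN(v)=\frac{v-Hess\,\psi(v)+d\psi(v)\,x}{(1+|\nabla\psi|^2)^{1/2}}-\frac{d\psi(Hess\,\psi(v))\,(x-\nabla\psi)}{(1+|\nabla\psi|^2)^{3/2}}.
\end{equation*}
Applying $(d\iota)^{-1}$, the terms parallel to $x$ cancel against the $x$-component produced by subtracting $\langle dN(v),x\rangle x$, and the component along $\nabla\psi$ coming from the second summand survives. The resulting expression is precisely the claimed formula for $A(v)$.

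Finally, $H=\mathrm{tr}(A)$ is obtained by taking the trace term by term: $\mathrm{tr}(I)=n$, $\mathrm{tr}(Hess\,\psi)=\Delta\psi$, and for the rank-one operator $v\mapsto d\psi(Hess\,\psi(v))\,\nabla\psi$ the trace equals $d\psi(Hess\,\psi(\nabla\psi))=Hess\,\psi(\nabla\psi,\nabla\psi)$. The main (and only) potential obstacle is the algebraic bookkeeping when splitting $dN(v)$ into its components tangent to $\S^n$ and along $x$, but this is straightforward once the Gauss formula is invoked.
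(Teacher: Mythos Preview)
Your derivation of $A$ is correct and essentially the same as the paper's: both compute $dN(v)$ from Lemma~\ref{applemN} (your expression and the paper's coincide once one identifies the intrinsic covariant derivative $\nabla_v\nabla\psi$ with $Hess\,\psi(v)$), and both then invert $d\iota$; you write down $(d\iota)^{-1}(u)=e^{-\psi}(u-\langle u,x\rangle x)$ explicitly, while the paper instead recognises $dN(v)$ as $d\iota(e^{-\psi}V)$, but these are the same step.

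Your computation of $H$, however, is genuinely shorter than the paper's. You use the metric-independence of the trace of an endomorphism and evaluate $\mathrm{tr}(A)$ directly, reading off $\mathrm{tr}(I)=n$, $\mathrm{tr}(Hess\,\psi)=\Delta\psi$, and the trace $Hess\,\psi(\nabla\psi,\nabla\psi)$ of the rank-one piece. The paper instead chooses a $can$-orthonormal eigenbasis $\{e_i\}$ of $Hess\,\psi$, computes $g_\psi(Ae_i,e_j)$ and the inverse metric $g^{ij}$ explicitly, and then sums $\sum g^{ij}g_\psi(Ae_i,e_j)$. Your route avoids inverting $g_\psi$ altogether and is the cleaner argument; the paper's approach, on the other hand, records the intermediate formulas \eqref{eqgAeiej} and \eqref{eqgijemcima}, which could be useful if one later needed the second fundamental form or the inverse metric in that frame.
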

\begin{proof}
    We compute at a given point $p\in\S^n$. For every $v\in T_p\S^n$,
    \begin{equation*}
    	d\iota(Av)=dN(v)=\frac{v-\nabla_{v}\nabla\psi+\inner{\nabla\psi}{v}x}{(1+|\nabla\psi|^2)^{\frac{1}{2}}}-\frac{\inner{\nabla_{v}\nabla\psi}{\nabla\psi}}{(1+|\nabla\psi|^2)^{\frac{3}{2}}}(x-\nabla\psi).
    \end{equation*}
    After rearranging terms and introducing some notation, we obtain
    \begin{equation*}
    	d\iota(Av)=\inner{V}{\nabla\psi}x+V=d\iota(e^{-\psi}V),
    \end{equation*}
    where 
    \begin{equation*}
    	V:=\frac{1}{(1+|\nabla\psi|^2)^{\frac{1}{2}}}\Big(-\nabla_v\nabla\psi+v+\frac{\inner{\nabla_v\nabla\psi}{\nabla\psi}}{1+|\nabla\psi|^2}\nabla\psi\Big).
    \end{equation*}
    Hence
    \begin{equation*}
    	Av=e^{-\psi}V=\frac{e^{-\psi}}{(1+|\nabla\psi|^2)^{\frac{1}{2}}}\Big(-Hess\,\psi(v)+v+\frac{d\psi(Hess\,\psi(v))}{1+|\nabla\psi|^2}\nabla\psi\Big),
    \end{equation*}
    as claimed.
    
    In order to compute the mean curvature, let $e_1,\ldots, e_n\in T_p\S^n$ be an orthonormal basis of eigenvectors of $Hess\,\psi$ (with respect to the canonical metric), so that $\nabla_{e_i}\nabla\psi=\lambda_ie_i$ for certain eigenvalues $\lambda_i\in\R$.
    
    Writing $a_i=\inner{\nabla\psi}{e_i}$, we have
\begin{align*}
	g_\psi(Ae_i,e_j) & =\inner{d\iota(Ae_i)}{d\iota(e_j)} \\
	& = -\Big\langle\frac{(\lambda_i-1)e_i-a_ix}{(1+|\nabla\psi|^2)^{\frac{1}{2}}}-\frac{\lambda_ia_i}{(1+|\nabla\psi|^2)^{\frac{3}{2}}}(x-\nabla\psi),e^\psi(a_jx+e_j)\Big\rangle,
\end{align*}
	so that
    \begin{equation}\label{eqgAeiej}
        g_\psi(Ae_i,e_j)=-\frac{e^\psi}{(1+|\nabla\psi|^2)^{\frac{1}{2}}}\big((\lambda_i-1)\delta_{ij}-a_ia_j\big)
    \end{equation}
    for all indices $i$, $j$. On the other hand, 
    \begin{equation*}
    	g_{ij}=g_\psi(e_i,e_j)=e^{2\psi}(\delta_{ij}+a_ia_j).
    \end{equation*}
    Denote by $g^{ij}$ the coefficient of the inverse matrix of $G=(g_{ij})$. A straightforward computation shows that 
    \begin{equation}\label{eqgijemcima}
    	g^{ij}=e^{-2\psi}\Big(\delta_{ij}-\frac{a_ia_j}{1+|\nabla\psi|^2}\Big).
    \end{equation}
    Since the mean curvature can be computed by    \begin{equation*}
    	H=\sum_{i,j=1}^{n}g^{ij}g_\psi(Ae_i,e_j),
    \end{equation*}
    the final formula for $H$ is obtained by combining \eqref{eqgAeiej} and \eqref{eqgijemcima}, and observing that $\nabla\psi=\sum a_ie_i$, $\Delta \psi=\sum\lambda_i$, and $Hess\, \psi (\nabla \psi,\nabla\psi)= \sum \lambda_ia_i^2$. 
\end{proof}

Let $\iota_t:\S^n\rightarrow\R^{n+1}$, $t\in(-\varepsilon,\varepsilon)$, be a one-parameter family of star-shaped embeddings given by $\iota_t(x)=e^{\psi(t,x)}x$, where $\psi:(-\varepsilon,\varepsilon)\times\S^n\rightarrow\R$ is smooth and $\psi(0,x)=x$ for all $x\in \mathbb{S}^n$. 

\begin{lem}\label{applemvariationAH}
  For $f(x)=\partial_t\psi(0,x)$,  we have
    \begin{align*}
        \frac{\partial}{\partial t}\Big|_{t=0} A_t&=-Hess\,f-fI,\\
        \frac{\partial}{\partial t}\Big|_{t=0} H_t&=-\Delta f-nf.
    \end{align*}
\end{lem}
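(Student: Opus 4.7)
The plan is to apply the formulas of Lemma \ref{applemAH} directly, viewing them as smooth functions of the jets of $\psi$, and differentiate at $t=0$ via the chain rule. The initial embedding $\iota_0$ is the standard inclusion of $\mathbb{S}^n$ (so the condition in the statement should be read as $\psi(0,\cdot)\equiv 0$), which means that at $t=0$ we have $\psi_0=0$, $\nabla\psi_0=0$, and $Hess\,\psi_0=0$, while $\partial_t\psi|_{t=0}=f$, $\partial_t\nabla\psi|_{t=0}=\nabla f$, and $\partial_t Hess\,\psi|_{t=0}=Hess\,f$. These vanishing identities will cause most of the nonlinear terms appearing in the formulas of Lemma \ref{applemAH} to drop out upon differentiation.

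First I would differentiate
$$A_t=\frac{e^{-\psi_t}}{(1+|\nabla\psi_t|^2)^{1/2}}\Big(-Hess\,\psi_t+I+\frac{d\psi_t(Hess\,\psi_t(\cdot))}{1+|\nabla\psi_t|^2}\nabla\psi_t\Big).$$
The scalar prefactor $e^{-\psi_t}(1+|\nabla\psi_t|^2)^{-1/2}$ has value $1$ at $t=0$, and its $t$-derivative at $t=0$ equals $-f$, since the chain rule contribution from $|\nabla\psi_t|^2$ vanishes (it is quadratic in $\nabla\psi_t$, which vanishes at $t=0$). The parenthesized expression equals $I$ at $t=0$, and its $t$-derivative at $t=0$ equals $-Hess\,f$, since the third summand is proportional to $\nabla\psi_t$ and therefore has vanishing first variation at $t=0$. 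Combining, via the Leibniz rule,
$$\partial_t A_t\big|_{t=0}=(-f)\cdot I+1\cdot(-Hess\,f)=-Hess\,f-fI,$$
as claimed.

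For the mean curvature, the same bookkeeping applied to the second formula in Lemma \ref{applemAH} gives $H_0=n$, and
$$\partial_t H_t\big|_{t=0}=(-f)\cdot n+1\cdot(-\Delta f)=-\Delta f-nf,$$
because the term $Hess\,\psi_t(\nabla\psi_t,\nabla\psi_t)/(1+|\nabla\psi_t|^2)$ is cubic in the derivatives of $\psi_t$ and contributes nothing to first order. As a consistency check one may equivalently compute $\partial_t H_t|_{t=0}$ as the trace, with respect to $g_{\psi_0}=can$, of $\partial_t A_t|_{t=0}=-Hess\,f-fI$, which yields $-\Delta f-nf$.

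The computation is essentially bookkeeping; there is no serious obstacle. The only point deserving a brief verbal note is that each term one wishes to discard at $t=0$ really does vanish to first order, and this is immediate from its polynomial form in the factors $\psi_t$, $\nabla\psi_t$, and $Hess\,\psi_t$, all of which vanish at $t=0$.
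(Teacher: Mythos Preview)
Your proposal is correct and matches the paper's own proof, which simply states that the result is ``a straightforward computation using the formulae in Lemma~\ref{applemAH}.'' You have merely spelled out the bookkeeping (including the correct reading of the initial condition as $\psi(0,\cdot)\equiv 0$), and every step checks out.
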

\begin{proof}
	A straightforward computation using the formulae in Lemma \ref{applemAH}.
\end{proof}

\begin{remark}\label{rmkHeR}
	As one can observe from the Gauss equation, Corollary \ref{applemvariationAH} confirms that the expansion of $H_t$ in $t$ has the same first-order term (up to a constant factor) as the scalar curvature; see Section 9.3 of \cite{ACM}. This expansion was a key ingredient in the proof of Theorem C therein. Thus, if we combine it with the fact that the mean curvature is preserved by intrinsic isometries of $(\S^n,g_{\psi_t})$ (which is a direct consequence of Lemma \ref{lemintrinsicareextrinsic}), we can follow the steps in the proof of Theorem C of \cite{ACM} to show that, by the same judicious choice of initial smooth odd function on $\mathbb{S}^n$, Theorem \ref{thmA} yields isometric embeddings of metrics in $\mathcal{Z}$ with trivial isometry group. 

	In any case, Theorem \ref{thmB} contains more information. Indeed, it guarantees that, for \textit{any} given $f\in C^{\infty}_{odd}(\mathbb{S}^n)$ that is $L^2$-orthogonal to linear functions and such that $G_f=\{Id\}$, there exists a one-parameter family of metrics $g_t\in \mathcal{Z}$, each one induced by a star-shaped perturbation of $\mathbb{S}^n$, that have trivial isometry group for all sufficiently small $t\neq 0$.
\end{remark}

\subsection{Comparison between constructions} Given a smooth odd function $f$ on $\mathbb{S}^n$, we have now two ways of constructing smooth one-parameter families $g_{t}\in \mathcal{Z}$ out it, so that $g_t=(1 + tf)can + o(t)$ as $t$ goes to zero. Namely, we can use either Theorem A of \cite{ACM} or Theorem \ref{thmA}. We can show that, generically, these families are distinct. More precisely, we give a criterion to distinguish both families in terms of the Hessian of $f$ on $(\mathbb{S}^n,can)$.

\begin{prop}
    Let $f$ be a smooth odd function on $\S^n$, $n\geq4$. Let  $g_t=\iota_t^*can$ and $\overline{g}_t=e^{\rho_t}can$ be the associated one-parameter families of Riemannian metrics in $\mathcal{Z}$ given by Theorem \ref{thmA} and Theorem A of \cite{ACM}, respectively.
    
    Let $U$ be any open neighborhood of a given point $p\in \mathbb{S}^n$. If $\text{Hess}\,f(p)$ does not have an eigenvalue of multiplicity at least $n-1$, then, for all sufficiently small $t\neq 0$, 
    \begin{itemize}
        \item[$(i)$] $(U,g_t|_U)$ is not conformally flat; and
        \item[$(ii)$] $(U,\overline{g}_t|_U)$ does not have an isometric immersion in $\R^{n+1}$.
    \end{itemize}
\end{prop}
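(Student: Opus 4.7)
Both parts will rest on the classical Cartan--Schouten theorem: in dimension $n \geq 4$, a hypersurface $M^n \subset \R^{n+1}$ is conformally flat if and only if its shape operator is \emph{quasi-umbilical} at every point, i.e.\ has a principal curvature of multiplicity at least $n-1$. A guiding linear-algebraic observation that I will use repeatedly is the following: if $T_t = c_t\, I + tS + o(t)$ is a smooth family of symmetric endomorphisms of a finite-dimensional inner product space, with $c_t \in \R$, then for all sufficiently small $t \neq 0$ the eigenvalue-multiplicity partition of $T_t$ equals that of $S$. Indeed, two distinct eigenvalues $\alpha_i \neq \alpha_j$ of $S$ produce eigenvalues of $T_t$ whose difference is $t(\alpha_i - \alpha_j) + o(t) \neq 0$, so distinct eigenvalues of $S$ cannot coalesce under a small perturbation of a scalar matrix.

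For (i), Lemma \ref{applemvariationAH} yields the expansion $A_t = I + t(-\text{Hess}\, f - f I) + o(t)$ for the shape operator of the embedding $\iota_t$. Evaluated at $p$, the endomorphism $-\text{Hess}\, f(p) - f(p) I$ has the same eigenvalue-multiplicity partition as $\text{Hess}\, f(p)$, so the observation above propagates the hypothesis---that no eigenvalue of $\text{Hess}\, f(p)$ has multiplicity $\geq n-1$---to $A_t|_p$ for all sufficiently small $t \neq 0$. Cartan--Schouten then rules out conformal flatness of $(U, g_t|_U)$, which would require quasi-umbilicity at every point of $U$, in particular at $p$.

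For (ii), suppose for contradiction that $(U, \overline{g}_t|_U)$ admits an isometric immersion in $\R^{n+1}$. Since $\overline{g}_t = e^{\rho_t}\,can$ is conformal to $can$, it is conformally flat; Cartan--Schouten therefore forces the corresponding shape operator $\bar{B}$ to be quasi-umbilical at every point. A direct Gauss-equation computation gives $\text{Ric}^{\sharp} = \text{tr}(\bar{B})\bar{B} - \bar{B}^2$; with $\bar{B}$ having eigenvalues $\lambda$ of multiplicity $n-1$ and $\mu$ of multiplicity $1$, this Ricci endomorphism has eigenvalues $(n-2)\lambda^2 + \mu\lambda$ of multiplicity $n-1$ and $(n-1)\lambda\mu$ of multiplicity $1$. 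On the other hand, the standard first-order conformal variation formula for Ricci, with $\rho_t = tf + o(t)$, gives
\begin{equation*}
\text{Ric}^{\sharp}(\overline{g}_t) = (n-1)\, I - \frac{(n-2)t}{2}\,\text{Hess}\, f + \beta_t\, I + o(t),
\end{equation*}
where $\beta_t$ is a scalar depending on $f$ and $\Delta f$. The perturbation principle applied at $p$ then shows that $\text{Ric}^{\sharp}(\overline{g}_t)|_p$ inherits the multiplicity partition of $\text{Hess}\, f(p)$ for small $t \neq 0$, contradicting the quasi-umbilical conclusion.

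\textbf{Main obstacle.} The essential conceptual step in both parts is to recognize the appropriate pointwise algebraic obstruction via Cartan--Schouten; the remaining ingredients are the first-order expansions of the shape operator and of the Ricci tensor under conformal change, combined with the elementary perturbation principle above. Because the unperturbed shape operator $A_0 = I$ is maximally degenerate, some care is required to justify that multiplicities propagate from $\text{Hess}\, f(p)$ to $A_t|_p$ and $\text{Ric}^{\sharp}(\overline{g}_t)|_p$, but this is handled by the scalar-matrix perturbation argument. I do not anticipate any substantial difficulty beyond locating a convenient reference for Cartan--Schouten in the precise form used in~(ii).
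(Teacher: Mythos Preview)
Your proposal is correct and follows essentially the same route as the paper: Cartan's theorem on conformally flat Euclidean hypersurfaces, the first-order expansion of the shape operator from Lemma~\ref{applemvariationAH} for~(i), and for~(ii) the conformal variation of the Ricci endomorphism combined with the Gauss equation. One quibble: your claim that the multiplicity partition of $T_t$ \emph{equals} that of $S$ is slightly too strong (a repeated eigenvalue of $S$ can split at higher order), but the direction you actually use---that no eigenvalue multiplicity can increase---is correct and is all that is needed.
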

\begin{proof}
    When $n\geq 4$, it is well known that a conformally flat hypersurface of $\mathbb{R}^{n+1}$ has a principal curvature of multiplicity at least $n-1$ (see \cite{Cartan} or \cite{NM}). 
    
    By the assumption on $f$, Lemma \ref{applemvariationAH} implies that the principal curvatures of $A_t(p)$ do not have this property, for any sufficiently small $t$. This proves item $(i)$.
    
    For the second item, let us consider the Ricci endomorphism of the metric $\overline{g}_t=e^{2\rho_t}can$. By a standard computation, 
    \begin{equation}\label{eqricconform}
    	Ric_t=e^{-2\rho_t}\big(Ric_0-(n-2)[Hess\, \rho_t-d\rho_t(\cdot)\nabla\rho_t]-(\Delta\rho_t+(n-2)|d\rho_t|^2)I\big),
    \end{equation}
    where $Ric_0=(n-1)I$ is the Ricci endomorphism of $\overline{g}_0=can$. Differentiating equation \eqref{eqricconform} at $t=0$,
    \begin{equation*}
    	\frac{\partial}{\partial t}\Big|_{t=0} Ric_t=-2(n-1)f I-(n-2) Hess\, f-\Delta fI.
   	\end{equation*}
    Therefore, by the assumption on $f$, $Ric_t(p)$ does not have an eigenvalue with multiplicity at least $n-1$, for all $t\neq 0$ sufficiently small. 
    
	Assume then, by contradiction, that some $t\neq 0$ sufficiently small is such that $(U,\overline{g}_t)$ has an isometric embedding into $\mathbb{R}^{n+1}$. Let $e_i$ be the eigenvectors of the shape operator $A_p$, and let $\lambda_i$ be the associated eigenvalue. Since $\overline{g}_t$ is conformally flat, its shape operator $A_p$ has an eigenvalue of multiplicity at least $n-1$. However, by Gauss equation, we have
	 \begin{equation*}
    	Ric_t(e_i)=(H-\lambda_i)\lambda_ie_i
    \end{equation*}
    where $H=\lambda_1+\ldots+\lambda_n$. It follows that $Ric_t$ also has an eigenvalue of multiplicity at least $n-1$, a contradiction.
\end{proof}

\subsection{Equivariance of the conformally flat deformations}\label{appequivarainceconformal}
In this section we sketch how the arguments of Section \ref{subsection equivariance} can be adapted in order to determine the isometry groups of the conformally flat metrics in $\mathcal{Z}$ constructed via Theorem A of \cite{ACM}, see Theorem \ref{symmetries of the conf-flat case} below. \\

	Given $f\in C^\infty_{odd}(\S^n)$, Theorem A of \cite{ACM} constructs a one-parameter family of conformally flat Riemannian spheres $(\S^n,e^{2\rho_t}\text{can})$ with $\rho_0=0$ and $\dot{\rho}_0=f$. As observed in Remark \ref{rmkconformalequivariance}, we can check that the construction is equivariant with respect to the natural action of $O(n+1)$, by following the arguments of Section \ref{subsectionapplication}. Thus, arguing as in Proposition \ref{propparathmB} and its Corollary \ref{corfirstpartthmB}, we conclude that the stabilizer group $G_f$ of $f$ is a subset of the group of isometries of $(\S^n,e^{2\rho_t}\text{can})$ (here, we identify elements of $O(n+1)$ with their restrictions to $\mathbb{S}^n$) for any $t$.
	
	Let $\R^{n+2}_1$ be the $(n+2)$-dimensional Lorentz space. Decompose it orthogonally as $\R^{n+2}_1=\R^{n+1}\times \R e$, where $e\in\R^{n+2}_1$ is a fixed vector with $\inner{e}{e}=-1$. 
Consider the maps
   \begin{align*}
        \iota_t : (\S^n,e^{2\rho_t}\text{can}) &\rightarrow \R^{n+2}_1\\
        x &\mapsto e^{\rho_t(x)}((x,0)+e).
    \end{align*}
	It is straightforward to check that $\iota_t$ is an isometric embedding, with the image contained in the upper light cone.

Let $O_1(n+2)$ be the orthogonal group of the Lorentz space $\R^{n+2}_1$. This group contains a copy of $O(n+1)$, acting naturally on the first factor of the decomposition $\R^{n+2}_1=\R^{n+1}\times\R e$ while fixing the vector $e$.

	Let $T$ be an isometry of $(\S^n,e^{2\rho_t}\text{can})$. Theorem 16.1 of \cite{BookDT} shows that, for $n\geq 3$, there exists an isometry $\tilde{T}\in O_1(n+2)$ such that $\tilde{T}|_{\iota_t(\S^n)}=T$ (\textit{cf}. Lemma \ref{lemintrinsicareextrinsic}). Hence, the isometry group $G_t$ of $(\mathbb{S}^n,e^{2\rho_t}\text{can})$ is identified with the set 
\begin{equation*}
	G_t=\{T\in O_1(n+2):T(\iota_t(\S^n))=\iota_t(\S^n)\}.
\end{equation*}
Observe that, given the natural inclusion $O(n+1)\subset O_1(n+2)$, for every $A\in G_f$ we have $A\iota_t=\iota_tA$. Thus, $G_f$ is a compact Lie subgroup of $G_t$.

	We want to show that $G_f=G_t$ for any sufficiently small $t\neq 0$, as soon as $f$ is $L^2$-orthogonal to the linear functions. This restriction is again natural, because we want to exclude trivial deformations corresponding to the action of the group of conformal diffeomorphism of $(\mathbb{S}^n,can)$. 
	
	We adapt the argument of Lemma \ref{lemma f=fA} to obtain:
\begin{lem}\label{f=fA conformal}
    Let $f\in C^\infty_{odd,0}(\S^n)$. Given $t_k\rightarrow 0$, let $T_k\in G_{t_k}\subseteq O_1(n+2)$. If $\lim T_k=T\in O_1(n+2)$ exists, then $T\in G_f$.
\end{lem}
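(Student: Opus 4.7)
The plan is to mimic the proof of Lemma \ref{lemma f=fA}, replacing the extrinsic mean curvature of the star-shaped embedding by a natural intrinsic scalar invariant of $(\S^n, e^{2\rho_t}\text{can})$ --- the scalar curvature --- which is the object adapted to the conformal setting. First, I would promote each $T_k$ to the intrinsic isometry $\hat T_k := \iota_{t_k}^{-1} \circ T_k \circ \iota_{t_k}$ of $(\S^n, e^{2\rho_{t_k}}\text{can})$. Since $\rho_{t_k} \to 0$ in $C^\infty$ by smoothness of the Nash--Moser solution used in Theorem A of \cite{ACM}, the embeddings $\iota_{t_k}$ converge in $C^\infty$ to $\iota_0$. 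Combined with the hypothesis $T_k \to T$ and the intertwining $T_k \iota_{t_k} = \iota_{t_k} \hat T_k$, this forces $\hat T_k \to \hat T$ in $C^\infty$, where $\hat T$ is an isometry of $(\S^n, \text{can})$, hence $\hat T \in O(n+1)$. Passing to the limit in the intertwining identity and exploiting the linearity of $T$ on $\R^{n+2}_1$, one checks that $T$ fixes $e$ and restricts to $\hat T$ on $\R^{n+1}$, so $T$ lies in the standard copy of $O(n+1) \subset O_1(n+2)$. The only thing left is to show $f \circ T = f$.

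For this, I would use the scalar curvature $R_k$ of $e^{2\rho_{t_k}}\text{can}$, which is preserved by every intrinsic isometry: $R_k \circ \hat T_k = R_k$. The standard conformal change formula gives
$$R_k = e^{-2\rho_{t_k}}\bigl(n(n-1) - 2(n-1)\Delta\rho_{t_k} - (n-1)(n-2)|\nabla \rho_{t_k}|^2\bigr),$$
so if we write $R(t, x) := R_t(x)$, then $R(0, \cdot) \equiv n(n-1)$ and
$$\frac{R_k - n(n-1)}{t_k} \xrightarrow{k \to \infty} -2(n-1)(\Delta f + nf) \quad \text{in} \quad C^\infty(\S^n).$$
Because all spatial derivatives of $R(0, \cdot)$ vanish identically, the uniform bound $|\partial_s^2 R(t, \gamma(s))| \leq C|t|$ holds along every unit-speed geodesic $\gamma$, in perfect analogy with \eqref{ddH<Ct}. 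Taylor-expanding $R(t_k, \hat T_k x)$ around $(0, Tx)$ along a short geodesic of length $d_{\text{can}}(\hat T_k x, Tx) \to 0$, and handling the cross terms verbatim as in Lemma \ref{lemma f=fA}, yields
$$\lim_{k \to \infty} \frac{R_k(\hat T_k x) - n(n-1)}{t_k} = -2(n-1)(\Delta f(Tx) + nf(Tx)).$$

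Combining this with the analogous expansion at $x$ and the intrinsic invariance $R_k(\hat T_k x) = R_k(x)$ gives $\Delta f + nf = \Delta(f \circ T) + n(f \circ T)$ pointwise on $\S^n$. Hence $f - f \circ T \in \ker(\Delta_{\text{can}} + n)$, which is the space of linear functions restricted to $\S^n$. Since this subspace is $O(n+1)$-invariant and $f$ is $L^2$-orthogonal to it, so is $f \circ T$; consequently $f - f \circ T \equiv 0$ and $T \in G_f$, as claimed. The main obstacle I anticipate is the bookkeeping of the two-variable Taylor error, which is resolved exactly as in Lemma \ref{lemma f=fA}; the key structural reason the plan succeeds is the coincidence highlighted in Remark \ref{rmkHeR}, namely that the first-order linearization of a natural intrinsic scalar invariant is the elliptic operator $-2(n-1)(\Delta + n)$ applied to $f$, whose kernel meets $C^\infty_{odd,0}(\S^n)$ trivially.
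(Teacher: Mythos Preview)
Your proposal is correct and follows essentially the same approach as the paper: adapt Lemma \ref{lemma f=fA} by replacing the mean curvature $H_t$ with the scalar curvature $R_t$, whose linearization at $t=0$ is a nonzero multiple of $\Delta f + nf$, and then run the same Taylor-expansion argument to conclude $f-f\circ T\in\ker(\Delta_{can}+n)$. The paper only sketches this (citing Remark \ref{rmkHeR} and \cite{ACM}, Section 9.3), whereas you spell out the additional preliminary step that $T$ fixes $e$ and lies in $O(n+1)\subset O_1(n+2)$; note also that your constant $-2(n-1)$ differs from the paper's stated $-2n(n-1)$, but this discrepancy is immaterial since only the kernel of the linearized operator enters the argument.
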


	The main difference is that, in the last part of the argument, we use the scalar curvature $R_t$ of the metric $e^{2\rho_t}can$ instead of the mean curvature $H_t$ of the embedding $\iota_t$. Since $R_0=n(n-1)$ and
$$\partial_t|_0R_t(x)=-2n(n-1)(\Delta f(x)+nf(x)).$$
(see \cite{ACM}, Section 9.3), up to a constant, the properties of the Taylor expansion of $H_t$ required for the argument in Lemma \ref{lemma f=fA} are the same as those satisfied by $R_t$ (\textit{cf}. Remark \ref{rmkHeR}, where a similar observation played an analogous role).

	Finally, following the argument of Lemma \ref{lemg=gt} with obvious notational changes, we verify that $Lie(G_f)=Lie(G_t)$, as soon as $t$ is sufficiently small.
	
	Combining these three ingredients as in the proof of Theorem \eqref{thmB}, we prove:
\begin{thm}\label{symmetries of the conf-flat case}
    In the context of Theorem A of \cite{ACM} and its proof, suppose that $n\geq3$ and $f\in C^{\infty}_{odd,0}(\S^n)$. Then $G_t=G_{f}$ for every sufficiently small $t\neq 0$.
\end{thm}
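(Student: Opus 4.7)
The plan is to closely follow the argument of Theorem \ref{thmB}, with Lemma \ref{f=fA conformal} playing the role of Lemma \ref{lemma f=fA} and the linear group $O_1(n+2)$ replacing the Euclidean affine group. The structure is actually cleaner here: since $\iota_t(\S^n)$ lies on the future light cone of the origin in $\R^{n+2}_1$, every isometry in $G_t$ must fix the origin and is therefore already linear, so there is no separate translation part to track.

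Suppose, for contradiction, that there is a sequence $t_k\to 0$ and elements $T_k\in G_{t_k}\setminus G_f\subseteq O_1(n+2)$. The first step will be compactness. Since $\rho_{t_k}\to 0$ uniformly, the compact sets $\iota_{t_k}(\S^n)$ lie in a fixed bounded region of $\R^{n+2}_1$ and, for $k$ large, contain $n+2$ linearly independent vectors close to a fixed basis. Any linear map of $\R^{n+2}_1$ sending such a spanning bounded set into itself is bounded in matrix norm, so after passing to a subsequence, $T_k\to T\in O_1(n+2)$. By Lemma \ref{f=fA conformal}, $T\in G_f\subseteq G_{t_k}$; replacing $T_k$ by $T^{-1}T_k\in G_{t_k}\setminus G_f$, I may assume $T_k\to Id$.

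Next comes the decomposition step. Fix an inner product on $Lie(O_1(n+2))$ and write $Lie(O_1(n+2))=Lie(G_f)\oplus Lie(G_f)^\perp$. Near $Id$, the map $(u,v)\mapsto \exp(u)\exp(v)$ from $Lie(G_f)\times Lie(G_f)^\perp$ to $O_1(n+2)$ is a local diffeomorphism. Hence, for $k$ large, $T_k=g_k\exp(v_k)$ uniquely, with $g_k=\exp(u_k)\in G_f^0\subseteq G_f\subseteq G_{t_k}$ and $v_k\in Lie(G_f)^\perp$ small. Setting $\overline{T}_k:=g_k^{-1}T_k=\exp(v_k)\in G_{t_k}$, the assumption $T_k\notin G_f$ forces $\overline{T}_k\notin G_f$, hence $v_k\neq 0$. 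Writing $v_k=s_kw_k$ with $s_k=|v_k|>0$ and $|w_k|=1$, one has $s_k\to 0$.

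Finally, the iteration argument produces the contradiction. After passing to a further subsequence, $w_k\to w\in Lie(G_f)^\perp$ with $|w|=1$. For arbitrary $t\in\R$, choose integers $n_k$ with $n_ks_k\to t$ (possible since $s_k\to 0$). Then $\overline{T}_k^{\,n_k}=\exp(n_ks_kw_k)\in G_{t_k}$ converges to $\exp(tw)\in O_1(n+2)$, and Lemma \ref{f=fA conformal} gives $\exp(tw)\in G_f$. Since $t$ is arbitrary, the one-parameter subgroup $\{\exp(tw)\}_{t\in\R}$ lies in $G_f$; differentiating at $t=0$ yields $w\in Lie(G_f)$, contradicting $|w|=1$ and $w\in Lie(G_f)^\perp$. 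The main (mild) obstacle in the adaptation is verifying the compactness in the first step, since $O_1(n+2)$ is not compact; this is handled cleanly by using that the $T_k$ preserve a fixed bounded spanning set.
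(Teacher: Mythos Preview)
Your proof is correct and follows essentially the same approach as the paper: contradiction via a sequence $T_k\in G_{t_k}\setminus G_f$, compactness in $O_1(n+2)$ using that the $T_k$ preserve a bounded spanning set, reduction to $T_k\to Id$ via Lemma \ref{f=fA conformal}, and an iteration argument producing a one-parameter subgroup in $G_f$. Your version is in fact slightly more streamlined than the paper's sketch, since you decompose $Lie(O_1(n+2))=Lie(G_f)\oplus Lie(G_f)^\perp$ directly rather than passing through $Lie(SO(n+1))$, and you do not separately invoke the analog of Lemma \ref{lemg=gt}; both simplifications are valid because the linear structure eliminates the translation bookkeeping that complicated Theorem \ref{thmB}.
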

\begin{proof}
	We may assume, by restricting the interval of definition of the deformation, that $\frac{1}{2}\leq e^{\rho_t(x)}\leq 2$ for any $t$ and $x$. 
	
	Suppose, by contradiction, that exists a sequence $t_k\rightarrow 0$ with $G_k:=G_{t_k}\neq G_f$. Since $G_f\subseteq G_k$, in this case there exist maps $T_k\in G_k\setminus G$. We claim that there exists a subsequence of $\{T_k\}$ that converges to some $T\in O_1(n+2)$. 
    
    In fact, let $v_1,\ldots, v_{n+2}\in\S^n$ be the vertices of a regular $(n+2)$-simplex, and consider $e_i:=v_i+e\in\R^{n+2}_1$.
    Observe that these vectors form a basis of the Lorentz space.
    Moreover, there exists a $w_i=w_{i,k}\in\S^n$ such that $T_k(\iota_{t_k}(v_i))=\iota_{t_k}(w_i)$, so
    $$|\inner{T_k(\iota_{t_k}(v_i)}{e_j}|=|\inner{\iota_{t_k}(w_i)}{e_j}|=e^{\rho_{t_k}(w_i)}|\inner{w_i+e}{v_i+e}|\leq2(1+1),$$
    thus
    $$|\inner{T_k(e_i)}{e_j}|=e^{-\rho_{t_k}(v_i)}|\inner{T_k(\iota_{t_k}(v_i)}{e_j}|\leq 8.$$
    Since $[-8,8]\subseteq\R$ is compact, we can assume, after taking a subsequence, that 
    $$\inner{Te_i}{e_j}:=\lim\inner{T_ke_i}{e_j},$$
    exists and is well-defined for all $i,j$. This defines a linear map $T=\lim T_k$ which belongs to $O_1(n+2)$ since this set is closed in the space of matrices. 
    
    By Lemma \ref{f=fA conformal}, we have $T\in G_f$. We follow now the steps of the proof by contradiction of Theorem \ref{thmB} to conclude. To be more precise, we replace $T_k$ by $T^{-1}T_k$ so $\lim T_k=Id$, decompose $Lie(O_1(n+2))=Lie(SO(n+1))\oplus Lie(SO(n+1))^\perp$, and use the exponential map of $O_1(n+2)$ as in the Euclidean case to find a non-zero $w\in Lie(G_f)\cap Lie(G_f)^\perp$, a contradiction.
\end{proof}

\bibliography{references}{}
\bibliographystyle{plain}

\end{document}